\pdfoutput=1

\documentclass{amsart}%

\pdfoutput=1

\usepackage[T1]{fontenc}
\usepackage[mathscr]{eucal}% so-called Euler fonts, allows \mathscr
\usepackage{amssymb}% allows special arrows like >-> e.g.
\usepackage{booktabs} %pretty tables
\usepackage[usenames,dvipsnames]{xcolor} %changed to xcolor; color was giving ``incompatible color definition'' warnings
\usepackage[normalem]{ulem}% allows \sout to strike-out, cross-out text.
\usepackage{amsthm}% allows theorem* , e.g.
\usepackage{bbold}% allows \unit \mathbb{1}
\usepackage{comment}% allows multiple lines to be commeted out
\usepackage[perpage]{footmisc} %resets footnotes per page
\usepackage[inline]{enumitem}% allows resuming enumerate
\usepackage{amsmath}% allows pmatrix
\usepackage{tikz}
\usepackage{tikz-cd}
\usetikzlibrary{arrows}

\usepackage{etoolbox} %this is needed for the hack removing address indentation

\usepackage[unicode]{hyperref} %added unicode; was getting warnings otherwise

\definecolor{dark-red}{rgb}{0.5,0.15,0.15}
\definecolor{dark-blue}{rgb}{0.15,0.15,0.6}
\definecolor{dark-green}{rgb}{0.15,0.6,0.15}

\hypersetup{
    colorlinks, linkcolor=Blue,
    citecolor=Blue, urlcolor=Blue
}

\usepackage{comment}

\usepackage[nameinlink,capitalise,noabbrev]{cleveref}
\usepackage{microtype}

\numberwithin{equation}{section}% makes equat numb contain the section
\makeatletter
\renewcommand{\tocsection}[3]{%
  \indentlabel{\@ifnotempty{#2}{\makebox[1.5em][r]{\ignorespaces#1 #2.}\quad}}#3}
\makeatother
\usepackage[all]{xy}
\xyoption{line}
\newdir{ >}{{}*!/-10pt/\dir{>}}
\usepackage{graphicx}
\usepackage{mathtools}
\NewDocumentCommand\limder{e{_}}{\mathchoice
{\varprojlim  \IfValueT{#1}{_{\mathclap{#1}}}{}^{\!1\!}\mathop{}}
{\lim^1  \IfValueT{#1}{_{#1}}}
{\lim^1  \IfValueT{#1}{_{#1}}}
{\lim^1  \IfValueT{#1}{_{#1}}}}
\newtheorem{ThmAlpha}{Theorem}

\swapnumbers %pour que le numéro apparaisse devant le théorème

\newtheorem{Thm}[equation]{Theorem}
\newtheorem*{Thm*}{Theorem}
\newtheorem*{MainThm*}{Main Theorem}
\newtheorem{Prop}[equation]{Proposition}
\newtheorem{Lem}[equation]{Lemma}
\newtheorem{Cor}[equation]{Corollary}

\newtheorem*{Que*}{Question}
\newtheorem*{Goal*}{Goal}

\theoremstyle{remark}
\newtheorem{Def}[equation]{Definition}

\newtheorem{Not}[equation]{Notation}
\newtheorem{Ass}[equation]{Assumption}
\newtheorem{Exa}[equation]{Example}

\newtheorem{Rem}[equation]{Remark}

\newtheorem{Que}[equation]{Question}
\tikzset{
    labelrotatebelow/.style={anchor=north, rotate=90, inner sep=1.0mm}
}
\tikzset{
    labelrotateabove/.style={anchor=south, rotate=90, inner sep=1.0mm}
}

\newcommand{\nc}{\newcommand}
\nc{\dmo}{\DeclareMathOperator}

\nc{\Beren}[1]{{\color{MidnightBlue}#1}}
\nc{\Drew}[1]{{\color{Orange}#1}}
\nc{\Tobi}[1]{{\color{Green}#1}}
\nc{\Natalia}[1]{{\color{Yellow}#1}}
\nc{\Dout}[1]{\Drew{\sout{#1}}}
\nc{\Bout}[1]{\Beren{\sout{#1}}}
\nc{\Tout}[1]{\Tobi{\sout{#1}}}
\nc{\Nout}[1]{\Natalia{\sout{#1}}}
\nc{\Greg}[1]{{\color{magenta}#1}}

\usepackage{todonotes}

\usepackage{pdflscape}

\nc{\overbar}[1]{\mkern 1.5mu\overline{\mkern-1.5mu#1\mkern-1.5mu}\mkern 1.5mu}

\usepackage[a]{esvect}
\nc{\weaklyfinite}{weakly closed}
\nc{\finite}{closed}
\nc{\BCdual}[1]{{#1}^*}
\newcommand{\tring}{\mathrm{2}\text{-}\mathrm{Ring}}
\nc{\LCore}{\mathrm{LCore}}
\nc{\Stovicek}{\v{S}\v{t}ov\'{i}\v{c}ek}
\nc{\ftriple}{f_{\natural}}
\nc{\unitC}{\unit_{\cat C}}%
\nc{\unitD}{\unit_{\cat D}}%
\nc{\Pone}{{\mathbb{P}^1}}
\nc{\InvSupp}[1]{\Supp_{\cat T}^{-1}(#1)}%this is pretty bad notation; beren
\nc{\InvCosupp}[1]{\Cosupp_{\cat T}^{-1}(#1)}
\nc{\closureP}{\overbar{\{\cat P\}}}
\nc{\closureQ}{\overbar{\{\cat Q\}}}
\nc{\singP}{\{\cat P\}}
\nc{\singQ}{\{\cat Q\}}
\nc{\singm}{\{\frak m\}}
\dmo{\Inj}{Inj}
\dmo{\tfib}{tfib}
\dmo{\tcof}{tcof}
\dmo{\dual}{d}
\dmo{\Aut}{Aut}
\dmo{\tofib}{tofib}
\dmo{\surj}{surj}
\dmo{\Excs}{Exc}
\dmo{\Homog}{Homog}
\dmo{\PSh}{PSh}
\dmo{\Epis}{Epi}

\dmo{\KInjdmo}{K}
\dmo{\Dbdmo}{mod}
\dmo{\NilCoh}{NilCoh}
\dmo{\sur}{sur}
\nc{\KInj}[1]{\KInjdmo(\Inj #1)}
\nc{\Dbmod}[1]{\Der^b(\Dbdmo #1)}
\dmo{\Viss}{vis}%lol
\nc{\Vis}{\Viss}
\nc{\vis}{\Vis}
\nc{\kappaaux}{g}
\nc{\kappaCh}{{\kappaaux(\cat C_h)}}
\nc{\kappam}{{\kappaaux({\frak m})}}
\nc{\kappaP}{{\kappaaux(\cat P)}}
\nc{\kappaQ}{{\kappaaux(\cat Q)}}
\nc{\kappaCP}{{\kappaaux_{\cat C}(\cat P)}}
\nc{\kappaDP}{{\kappaaux_{\cat D}(\cat P)}}
\nc{\kappaCQ}{{\kappaaux_{\cat C}(\cat Q)}}
\nc{\kappaDQ}{{\kappaaux_{\cat D}(\cat Q)}}
\nc{\kappaphiB}{{\kappaaux(\phi(\cat B))}}
\nc{\kappaphiQ}{{\kappaaux(\varphi(\cat Q))}}
\nc{\halfplus}{{\scriptscriptstyle\top}}
\dmo{\Sub}{Sub}
\nc{\SpEn}{\cat S_{E(n)}}
\nc{\SpEnf}{\cat S_n}
\nc{\Lcomp}{L^{\mathrm{com}}} %I made this and the next one commands because I'm unsure of the choice of notation
\nc{\Ucomp}{U^{\mathrm{com}}}
\nc{\bbullet}{{\scriptscriptstyle\hspace{-1pt}\bullet}}
\nc{\bullett}{{\scriptscriptstyle\bullet}\hspace{-1pt}}
\nc{\LF}{L\hspace{-0.2ex}F}
\dmo{\StMod}{StMod}
\dmo{\Proj}{Proj}
\nc{\SpG}{\Sp^G}
\nc{\EG}{\bbE_G}
\nc{\DEG}{\Der(\EG)}
\nc{\DE}{\Der(\bbE)}
\nc{\Prst}{{\cat P}\mathrm{r^{st}}}
\nc{\Mack}[2]{\mathrm{Mack}_{#1}(#2)}
\nc{\SC}{S\cat C}
\dmo{\fin}{{fin}}
\dmo{\DM}{DM}
\dmo{\fp}{fp}
\nc{\DMQ}{\DM_Q}
\dmo{\DerKal}{DMack}
\dmo{\coh}{coh}
\dmo{\Der}{D}
\dmo{\DMot}{DMot}
\dmo{\rmH}{H}
\dmo{\piu}{\underline{\pi}}
\dmo{\Sphere}{\mathbb{S}}
\nc{\HA}{{\rmH \hspace{-0.2em}\bbA}}
\nc{\HZ}{{\rmH \hspace{-0.2em}\bbZ}}
\nc{\HZbar}{{\rmH \hspace{-0.2em}\underline{\bbZ}}}
\nc{\HbbF}{{\rmH \hspace{-0.15em}\mathbb{F}}}
\nc{\Fp}{{\bbF_{\hspace{-0.1em}p}}}
\nc{\HFp}{{\rmH \hspace{-0.15em}\bbF_{\hspace{-0.1em}p}}}
\nc{\HQ}{{\rm H \bbQ}}
\nc{\DHZG}{\Der(\HZ_G)}
\nc{\DHZH}{\Der(\HZ_H)}
\nc{\DHZK}{\Der(\HZ_K)}
\nc{\DHZGN}{\Der(\HZ_{G/N})}
\nc{\DHZGG}{\Der(\HZ_{G/G})}
\nc{\DHZCp}{\Der(\HZ_{C_p})}
\nc{\DHZGprime}{\Der(\HZ_{G'})}
\nc{\DHZ}{\Der(\HZ)}
\nc{\frakp}{\mathfrak{p}}
\nc{\frakq}{\mathfrak{q}}
\nc{\frakS}{\mathfrak{S}}
\nc{\frakT}{\mathfrak{T}}
\nc{\Z}{\mathbb{Z}}
\nc{\F}{\mathbb{F}}
\nc{\SSG}{\text{sSet}_*^G}
\nc{\sSet}{\text{sSet}}

\dmo{\csupp}{csupp}
\dmo{\Con}{Conj}
\dmo{\Id}{Id}
\dmo{\rmK}{\textrm{\rm K}}
\dmo{\Spc}{Spc}
\dmo{\thick}{thick}
\dmo{\thickid}{thickid}
\nc{\thicko}[1]{\thickid\langle #1 \rangle}
\nc{\thickt}[1]{\langle #1 \rangle}
\dmo{\Ann}{Ann}
\dmo{\cone}{cone}
\dmo{\End}{End}
\dmo{\Derperf}{D_{perf}}
\dmo{\Mor}{Mor}
\dmo{\id}{id}
\dmo{\incl}{incl}
\dmo{\Tor}{Tor}
\dmo{\Img}{Im}
\dmo{\im}{im}
\dmo{\Ker}{Ker}
\dmo{\ind}{ind}
\dmo{\CoInd}{coind}
\dmo{\GH}{GH}
\dmo{\idem}{e}
\dmo{\res}{res}
\dmo{\infl}{infl}
\dmo{\Derqc}{D_{qc}}
\nc{\DbcohX}{\Der^b(\coh X)}
\dmo{\triv}{triv}
\dmo{\Tel}{Tel} %telescope
\dmo{\grMod}{grMod}%
\dmo{\Mod}{Mod}%
\dmo{\opname}{op}
\dmo{\SH}{SH}% ground name for cat of spectra
\dmo{\smallb}{b}% ground exponent for ``bounded''
\dmo{\Spec}{Spec}
\dmo{\supp}{supp}
\dmo{\supph}{supph}
\dmo{\Supp}{Supp}
\dmo{\crosseffec}{cr}

\dmo{\thofib}{tofib}
\dmo{\hofib}{hofib}
\dmo{\cosupp}{cosupp}
\dmo{\Cosupp}{Cosupp}
\nc{\SHc}{{\SH^c}}
\nc{\SHp}{{\SH_{(p)}}}
\nc{\SHcp}{{\SH^c_{(p)}}}
\nc{\SHG}{\SH(G)}
\nc{\SHGp}{\SH(G)_{(p)}}
\nc{\SHGc}{\SHG^c}
\nc{\SHGcp}{\SHG^c_{(p)}}
\nc{\quadtext}[1]{\quad\textrm{#1}\quad}
\nc{\qquadtext}[1]{\qquad\textrm{#1}\qquad}
\nc{\adj}{\dashv}
\nc{\bbL}{\mathbb{L}}
\nc{\bbS}{\mathbb{S}}
\nc{\bbA}{\mathbb{A}}
\nc{\bbE}{\mathbb{E}}
\nc{\bbN}{\mathbb{N}}
\nc{\bbQ}{\mathbb{Q}}
\nc{\bbZ}{\mathbb{Z}}
\nc{\bbF}{\mathbb{F}}
\nc{\cat}[1]{\mathscr{#1}}%or: \nc{\cat}[1]{\mathcal{#1}}
\nc{\ie}{{\sl i.e.}, }
\nc{\into}{\mathop{\rightarrowtail}}
\nc{\inv}{^{-1}}
\nc{\isoto}{\mathop{\overset{\sim}\to}}
\nc{\isotoo}{\mathop{\overset{\sim}\too}}
\nc{\onto}{\mathop{\twoheadrightarrow}}
\nc{\too}{\mathop{\longrightarrow}\limits}
\nc{\mapstoo}{\longmapsto}
\nc{\adh}[1]{\overline{#1}}% adherence
\nc{\adhpt}[1]{\adh{\{#1\}}}% adherence of a pt
\nc{\aka}{{a.\,k.\,a.}\ }
\nc{\calF}{\mathcal{F}}
\nc{\eg}{{\sl e.\,g.}}
\nc{\hook}{\hookrightarrow}
\newcommand\noloc{%
  \nobreak
  \mspace{6mu plus 1mu}
  {:}
  \nonscript\mkern-\thinmuskip
  \mathpunct{}
  \mspace{2mu}
}
\nc{\ideal}[1]{\langle #1\rangle}
\dmo{\red}{red}
\usepackage{makecell}
\dmo{\Hom}{Hom}
\nc{\Homcat}[1]{\Hom_{\cat #1}}
\nc{\iHom}{\mathcal{H}\mathrm{om}}
\nc{\ihom}[1]{\mathsf{hom}(#1)}
\nc{\ihomC}[1]{\mathsf{hom}_{\cat C}(#1)}
\nc{\ihomD}[1]{\mathsf{hom}_{\cat D}(#1)}
\nc{\ihomsub}[2]{\mathsf{hom}_{#1}(#2)}
\usepackage{stmaryrd}
\usepackage{longtable}
\nc{\Mid}{\,\big|\,}
\nc{\MMod}{\,\text{-}\Mod}%
\nc{\GrMMod}{\,\text{-}\grMod}%
\nc{\op}{^{\opname}}
\nc{\oto}[1]{\overset{#1}\to}
\nc{\otoo}[1]{\overset{#1}{\,\too\,}}
\nc{\sminus}{\!\smallsetminus\!}
\nc{\poplus}[1]{^{\oplus #1}}%
\nc{\potimes}[1]{^{\otimes #1}}% tensor power
\nc{\sbull}{{\scriptscriptstyle\bullet}}%\mathbf{\cdot}}%{}}
\nc{\SET}[2]{\big\{\,#1\Mid#2\,\big\}}
\nc{\SpcK}{\Spc(\cat K)}% most used
\nc{\then}{\Rightarrow}
\nc{\unit}{\mathbb{1}}% unit for \otimes
\nc{\xra}{\xrightarrow}
\nc{\phigeom}[1]{\widetilde{\Phi}^{#1}}
\dmo{\Oname}{O}
\dmo{\proper}{proper}% for proper subgroups
\dmo{\lenormal}{\unlhd}
\dmo{\fib}{fib}
\dmo{\cofib}{cofib}
\dmo{\lnormal}{\lhd}
\nc{\normal}{\trianglelefteq}%\lhd
\nc{\Op}{\Oname^p}% O^p for maximal p-normal subgroup
\nc{\Oq}{\Oname^q}% as above for p=q
\newcommand{\Sp}{{\mathscr{S}p}}

\dmo{\Ho}{Ho}
\dmo{\Spf}{Spf}
\dmo{\CB}{CB}
\dmo{\Fin}{Fin}
\dmo{\add}{add}
\dmo{\Fun}{Fun}
\dmo{\Ext}{Ext}
\dmo{\CAlg}{CAlg}
\dmo{\CMon}{CMon}
\dmo{\CC}{\cat C} %beren: I changed these, but left the O
\dmo{\DD}{\cat D}
\dmo{\OO}{\mathcal{O}}
\dmo{\Map}{Map}
\dmo{\Span}{Span}
\dmo{\Tot}{Tot}
\dmo{\N}{N}
\dmo{\Cat}{Cat}
\dmo{\colim}{colim}
\dmo{\hocolim}{hocolim}
\dmo{\Ch}{Ch}
\dmo{\A}{\mathbb{A}^{eff}}
\nc{\AGeff}{\mathbb{A}_G^{\mathrm{eff}}}
\nc{\BGeff}{\mathcal{B}_G^{\mathrm{eff}}}
\nc{\BG}{{\mathcal{B}_G}}
\nc{\NBGeff}{{\N}{\BGeff}}
\dmo{\Ab}{Ab}
\nc{\Smith}{\mathsf{Smith}}
\nc{\Floyd}{\mathsf{Floyd}}
\nc{\blue}{\beth^{\mathrm{geom}}}
\dmo{\Set}{Set}
\dmo{\ev}{ev}
\dmo{\Spcl}{Spcl}
\nc{\Funadd}{\Fun_{\add}}
\dmo{\proj}{proj}
\dmo{\cof}{cof}
\nc{\cPd}{\cat P_{\hspace{-.1em}d}}
\nc{\cPm}{\cat P_{\hspace{-.1em}m}}
\nc{\Chp}{\mathsf{Ch}_p}
\nc{\Pp}{\mathsf{P}_p}

\dmo{\Coideal}{Coideal}
\dmo{\gen}{gen}
\nc{\auxcoidealsymb}{\vartriangleleft}
\dmo{\Loc}{Loc}
\dmo{\Ind}{Ind}
\dmo{\Coloc}{Coloc}
\dmo{\Locideal}{Locid}
\dmo{\Colocideal}{Colocid}
\nc{\LOCO}{\Locideal}
\nc{\COLOCO}{\Colocideal}
\nc{\Loco}[1]{\LOCO\langle #1 \rangle}
\nc{\Coloco}[1]{\COLOCO\langle #1 \rangle}
\nc{\LambdaP}{\Lambda^{\cat P}} %beren: I've added this command here because we might need to make some spacing changes to make the typesetting less ugly
\nc{\LambdaQ}{\Lambda^{\cat Q}} %beren: I've added this command here because we might need to make some spacing changes to make the typesetting less ugly
\nc{\GammaP}{\Gamma_{\cat P}} %beren: I've added this command here because we might need to make some spacing changes to make the typesetting less ugly
\nc{\GammaQ}{\Gamma_{\cat Q}} %beren: I've added this command here because we might need to make some spacing changes to make the typesetting less ugly
\nc{\LambdaW}{\Lambda^{\hspace{-0.3ex}W}} %beren: I've added this command here because we might need to make some spacing changes to make the typesetting less ugly
\nc{\GammaW}{\Gamma_{\hspace{-0.3ex}W}} %beren: I've added this command here because we might need to make some spacing changes to make the typesetting less ugly
\nc{\gW}{g_W}
\nc{\gP}{g_{\cat P}}
\nc{\gQ}{g_{\cat Q}}
\nc{\cC}{{\cat C}}
\nc{\cT}{{\cat T}}
\nc{\cD}{{\cat D}}

\nc{\mT}{\kern-0.5em\mod\kern-0.1em\text{-}\cat{T}^c}
\nc{\mTc}{\kern-0.5em\mod\kern-0.1em\text{-}\cat{T}^c}
\nc{\MTc}{\Mod\kern-0.1em\text{-}\cat{T}^c}
\nc{\MT}{\Mod\kern-0.1em\text{-}\cat{T}}
\newcounter{enum-resume-hack}
\usepackage{wasysym}%
\usepackage{caption}
\Crefname{Thm}{Theorem}{Theorems}
\Crefname{Prop}{Proposition}{Propositions}

\usepackage{makecell}
\usepackage{tablefootnote}
\usepackage{arydshln}
\usepackage{booktabs}
\usepackage{quiver}
\usepackage[all]{xy}
\newdir{ >}{{}*!/-10pt/\dir{>}}

\makeatletter
\providecommand*{\twoheadrightarrowfill@}{%
  \arrowfill@\relbar\relbar\twoheadrightarrow
}
\providecommand*{\twoheadleftarrowfill@}{%
  \arrowfill@\twoheadleftarrow\relbar\relbar
}
\providecommand*{\xtwoheadrightarrow}[2][]{%
  \ext@arrow 0579\twoheadrightarrowfill@{#1}{#2}%
}
\providecommand*{\xtwoheadleftarrow}[2][]{%
  \ext@arrow 5097\twoheadleftarrowfill@{#1}{#2}%
}
\makeatother

\nc{\cL}{\mathcal{L}}
\nc{\cP}{\mathcal{P}}

\nc{\tblue}{\beth^{\mathrm{Tate}}}

\nc{\cA}{\mathcal{A}}
\nc{\cF}{\mathcal{F}}
\nc{\Fnt}{\cF_{\mathrm{nt}}}
\nc{\Fall}{\cF_{\mathrm{all}}}
\nc{\Ftriv}{\cF_{\mathrm{triv}}}

\nc{\bE}{\underline{E}}

\usepackage{adjustbox}

\DeclareMathOperator{\Shv}{Shv}

\DeclareMathOperator{\Perf}{\operatorname{Perf}}%
\renewcommand{\epsilon}{\varepsilon}
\makeatletter
\renewcommand{\rightleftarrows}[2]{%
  \mathrel{\mathop{%
    \vcenter{\offinterlineskip\m@th
      \ialign{\hfil##\hfil\cr
        \hphantom{$\scriptstyle\mspace{8mu}{#1}\mspace{8mu}$}\cr
        \rightarrowfill\cr
        \noalign{\kern.3ex}% <— add space between arrows
        \leftarrowfill\cr
        \hphantom{$\scriptstyle\mspace{8mu}{#2}\mspace{8mu}$}\cr
        \noalign{\kern-0.3ex}
      }%
    }%
  }\limits^{#1}_{#2}}%
}
\makeatother
\let\tac\textasteriskcentered % two handy shortcut macros
\let\ted\textemdash
\newcommand\separ{%
   \begin{center}
     \ted\ted\ \tac\,\tac\,\tac\ \ted\ted%
   \end{center}}
\DeclareMathOperator{\qc}{qc}
\DeclareMathOperator{\qct}{qct}

\DeclareMathOperator{\Open}{Open}

\newcommand{\leftrarrows}{\mathrel{\raise.75ex\hbox{\oalign{%
  $\scriptstyle\leftarrow$\cr
  \vrule width0pt height.5ex$\hfil\scriptstyle\relbar$\cr}}}}
\newcommand{\lrightarrows}{\mathrel{\raise.75ex\hbox{\oalign{%
  $\scriptstyle\relbar$\hfil\cr
  $\scriptstyle\vrule width0pt height.5ex\smash\rightarrow$\cr}}}}
\newcommand{\Rrelbar}{\mathrel{\raise.75ex\hbox{\oalign{%
  $\scriptstyle\relbar$\cr
  \vrule width0pt height.5ex$\scriptstyle\relbar$}}}}

\DeclareMathOperator{\cofree}{cofree}

\Crefname{Thm}{Theorem}{Theorems}
\Crefname{Prop}{Proposition}{Propositions}
\Crefname{Lem}{Lemma}{Lemmas}
\Crefname{Cor}{Corollary}{Corollaries}
\Crefname{Exa}{Example}{Examples}
\Crefname{Rem}{Remark}{Remarks}

\AddToHook{env/Thm/begin}{     \crefalias{equation}{Thm}}
\AddToHook{env/Exa/begin}{     \crefalias{equation}{Exa}}
\AddToHook{env/Prop/begin}{    \crefalias{equation}{Prop}}
\AddToHook{env/Lem/begin}{     \crefalias{equation}{Lem}}
\AddToHook{env/Cor/begin}{     \crefalias{equation}{Cor}}
\AddToHook{env/Conj/begin}{    \crefalias{equation}{Conj}}
\AddToHook{env/Rem/begin}{    \crefalias{equation}{Rem}}
\AddToHook{env/Def/begin}{    \crefalias{equation}{Def}}
\title{The formal spectrum of a tensor-triangulated category}
\author{Drew Heard}
\author{Marius Nielsen}

\address{Drew Heard, Department of Mathematical Sciences, Norwegian University of Science and Technology, Trondheim}
\email{drew.k.heard@ntnu.no}
\urladdr{\href{https://folk.ntnu.no/drewkh/}{https://folk.ntnu.no/drewkh/}}

\address{Marius Nielsen, Department of Mathematical Sciences, Norwegian University of Science and Technology, Trondheim}
\email{marius.v.b.nielsen@ntnu.no}
\urladdr{\href{https://mariusnielsen.github.io/}{https://mariusnielsen.github.io/}}
\begin{document}
\begin{abstract}
To any essentially small tensor-triangulated category $\cat K$ and Thomason subset $Y \subseteq \Spc(\cat K)$ we associate a Dirac ringed space $(\Spf(\cat K,Y), \mathcal{O}_{\Spf(\cat K,Y)})$, called the \emph{formal spectrum} of $(\cat K,Y)$. We establish basic properties of this construction and compute it in several examples from algebraic geometry, chromatic homotopy theory, equivariant homotopy theory, and modular representation theory.
\end{abstract}
\maketitle

\tableofcontents

\section{Introduction}
Tensor-triangular geometry packages a tensor-triangulated category into a geometric object via the Balmer spectrum \cite{Balmer05a}. Concretely, if $\cat K$ is an essentially small tt-category (or a \emph{$2$-ring} in our terminology), then its Balmer spectrum $\Spc(\cat K)$ is a spectral space. Its topology has a basis of closed sets given by supports of objects in $\cat K$, and it carries a natural structure sheaf $\mathcal{O}_{\Spc(\cat K)}$ valued in commutative-graded rings, i.e., it is a Dirac ringed space in the sense of \cite{HesselholtPstragowskiDiracI}. When $\cat K=\Der_{\qc}(X)^c$ for a quasi-compact and quasi-separated (qcqs) scheme $X$, one recovers $X$ as a locally ringed space from $(\Spc(\cat K),\mathcal O_{\Spc(\cat K)})$; see \Cref{exa:balmer-scheme}. This is Balmer’s reconstruction theorem \cite{Balmer02} and \cite[Theorem 8.5]{BuanKrauseSolberg07}, relying crucially on Thomason’s classification of thick tensor ideals in $\Der_{\qc}(X)^c$ \cite{Thomason97}.

In classical algebraic geometry, passing from a scheme $X$ to its formal completion $\widehat{X}_Y$ along a closed subset $Y$ isolates the infinitesimal neighborhood of $Y$ and remembers all nilpotent thickenings at once. This is preferable whenever a problem is intrinsically local around $Y$, and it is natural to ask for an analogue in tensor–triangular geometry.  For example, the $K(n)$-local stable homotopy category is obtained from the $E(n)$-local category by a process resembling adic completion (see the formulas in \cite[Proposition~7.10]{HoveyStrickland99}), and it morally behaves like a ``residue field''. One might therefore expect its spectrum to be a single point. However, in tt-geometry one must work with essentially small subcategories, and the compact objects in the $K(n)$-local category do not contain the unit (hence do not form a tt-category) for $n>0$, while the spectrum of dualizable objects is, perhaps, unexpectedly large \cite{BarthelHeardNaumann20pp}.

The goal of this paper is therefore to develop a formal analogue of the Balmer spectrum. Given an essentially small tt-category\footnote{For technical reasons, we work with stable $\infty$-categories throughout; see \Cref{rem:why-infty-big}.} $\cat K$ and a Thomason subset $Y \subseteq \Spc(\cat K)$, we define the \emph{formal spectrum} $\Spf(\cat K,Y)$ to be a Dirac ringed space whose underlying topological space is $Y$, equipped with a completed analogue of Balmer’s structure sheaf; see \Cref{def:formal-balmer} for the complete definition. Roughly speaking, $\Spc(\cat K)$ plays the role of $\Spec(R)$, and $\Spf(\cat K,Y)$ is an analogue of the formal spectrum $\Spf(R,I)$ of a ring $R$ along an ideal $I$ (see \Cref{ref:def-formal-spectrum-affine} for our conventions on formal spectra of rings, which differ slightly from some standard sources).

In the classical case, it is straightforward to check that the completion map $R \to R^{\wedge}_I$ induces an isomorphism of formal schemes
\[
\Spf(R^{\wedge}_I,IR^{\wedge}_I) \xrightarrow{\sim} \Spf(R,I).
\]
An analogue holds in the tt-world, although the proof is more delicate. Since Bousfield localization is most naturally formulated in the presentable setting, it is convenient to pass from an essentially small $\cat K$ to the associated stable homotopy theory $\cat C \coloneqq \Ind(\cat K)$. Inside $\cat C$ we can form the completion at $Y\subseteq \Spc(\cat K)$, obtaining a full subcategory $\widehat{\cat C}_Y$ of $Y$-complete objects together with a symmetric monoidal localization $\widehat{(-)}_Y \colon \cat C \to \widehat{\cat C}_Y$. Unlike categories usually considered in tensor-triangular geometry, $\widehat{\cat C}_Y$ is typically not rigidly-compactly generated. Our first main result identifies the formal spectrum computed from the dualizable objects of $\widehat{\cat C}_Y$ with the original definition in terms of $(\cat K,Y)$.

\begin{ThmAlpha}[\Cref{thm:completion-theorem}]\label{thm:intro:a}
Let $\cat K$ be a $2$-ring and let $Y\subseteq \Spc(\cat K)$ be a Thomason subset.  There is a natural isomorphism of Dirac ringed spaces
\[
(\varphi,\varphi^\#)\colon
\Spf(\widehat{\cat C}^{\dual}_Y,\varphi^{-1}(Y))
\longrightarrow
\Spf(\cat K,Y),
\]
where $\varphi=\Spc(\widehat{(-)}_Y)$ is the map on Balmer spectra induced by completion.
\end{ThmAlpha}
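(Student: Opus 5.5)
The plan is to prove the two halves of the statement separately: first that $\varphi$ restricts to a homeomorphism $\varphi^{-1}(Y)\to Y$, and then that $\varphi^\#$ is an isomorphism of sheaves of commutative-graded rings. For the topology, I would use that the completion $\widehat{(-)}_Y\colon \cat C\to\widehat{\cat C}_Y$ is symmetric monoidal and preserves dualizable objects, so it restricts to a tt-functor $\cat K\to\widehat{\cat C}^{\dual}_Y$ and $\varphi$ is defined. Write $\cat K_Y$ for the thick tensor ideal of objects supported in $Y$. The key observation is that completion is fully faithful on $\cat K_Y$. Indeed, for $x\in\cat K_Y$ the object $x$ is already $Y$-torsion, and $\widehat x_Y\simeq \Lambda_Y x$ corresponds to $x$ under the Greenlees--May/Dwyer--Greenlees equivalence between $Y$-torsion and $Y$-complete objects. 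Moreover, $\cat K_Y$ maps into the compact (hence dualizable) part of $\widehat{\cat C}_Y$, and there it generates the thick ideal of $\widehat{\cat C}^{\dual}_Y$ of objects supported on $\varphi^{-1}(Y)$.

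Points of $Y$ correspond to prime ideals $\cat P$ with $\cat P\in Y$. Equivalently, since $Y$ is Thomason, they correspond to the primes of $\cat K_Y$ in the sense of the spectrum of a non-unital ideal, via $\cat P\mapsto\cat P\cap\cat K_Y$. The analogous description holds for $\varphi^{-1}(Y)$ using the $Y$-torsion dualizable objects of $\widehat{\cat C}_Y$. Since these two ideals are identified by the torsion/complete equivalence, the primes match up, giving a continuous bijection. The supports of objects of the ideal form a basis of closed subsets on both sides, so this bijection is a homeomorphism.

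For the sheaves, I would reduce to quasi-compact opens $U\subseteq \Spc(\cat K)$, since $\mathcal O_{\Spf}$ is defined by its values on a basis. By \Cref{def:formal-balmer}, I expect $\mathcal O_{\Spf(\cat K,Y)}(U\cap Y)$ to be computed as graded homotopy of $\Lambda_{Y\cap U}\unit_U$, a completed unit in the finite localization $\cat C_U$ of $\cat C$. The structure sheaf of $\Spf(\widehat{\cat C}^{\dual}_Y,\varphi^{-1}(Y))$ is built the same way inside $\widehat{\cat C}_Y$. The task is then to show that finite localization away from $Z=U^c$ commutes with $Y$-completion, i.e.
\[
\Lambda_{Y}\bigl(\widehat{\unit}_Y\bigr)_{\varphi^{-1}(U)}\simeq \Lambda_{Y\cap U}\unit_U .
\]
Both sides are $Y$-complete, $U$-local, and receive a unit map from $\unit$. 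Since $Y$-completion is a smashing-dual (colocalization-type) functor determined by the torsion objects $\cat K_Y$, I would check that the map induces an isomorphism on $[k,-]_*$ for every $k\in\cat K_Y\cap\cat K_{U}$-supported generator. Both sides compute $[k,\unit]_*$ localized at $U$, and this detects equivalences between $Y$-complete $U$-local objects. Compatibility with restriction maps is then formal from naturality of the unit maps.

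I expect the main obstacle to be the identification of $\Spc(\widehat{\cat C}^{\dual}_Y)$ over $Y$, because $\widehat{\cat C}_Y$ is not rigidly-compactly generated. Its dualizable objects can be far larger than the image of $\cat K$, as the $K(n)$-local example shows. I would first try to show that every dualizable $Y$-complete object tensored with a torsion compact $k\in\cat K_Y$ lands in the essential image of $\cat K_Y$ (as a retract). This would make primes over $\varphi^{-1}(Y)$ determined by their intersection with the image of $\cat K_Y$. The argument would use that $k\otimes d$ is compact in the torsion category and therefore lies in $\thick(\cat K_Y)$.
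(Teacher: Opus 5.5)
\paragraph{The main gap: the sheaf comparison is done in the wrong category.} You say the structure sheaf of $\Spf(\widehat{\cat C}^{\dual}_Y,\varphi^{-1}(Y))$ is built ``the same way inside $\widehat{\cat C}_Y$''. By definition it is not. The formal spectrum of the $2$-ring $\cat L\coloneqq\widehat{\cat C}^{\dual}_Y$ is computed in $\cat D\coloneqq\Ind(\cat L)$, and its presheaf sections are $\pi_*\Hom_{\cat D}(e_{Y_1},\unit_{\cat D}[L(c)^{-1}])$, where $Y_1=\varphi^{-1}(Y)$ and $L=\widehat{(-)}_Y$. Since $\widehat{\cat C}_Y$ is not rigidly-compactly generated (its compacts are only $L(\cat K_Y)$), $\cat D$ and $\widehat{\cat C}_Y$ are different categories. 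Read inside $\widehat{\cat C}_Y$, the equivalence you set out to prove is essentially formal. The localized completed unit there is just $L(\unit_U)$. The map $\Hom_{\cat C}(e_Y,\unit_U)\to\Hom_{\widehat{\cat C}_Y}(Le_Y,L\unit_U)$ is an equivalence simply because $e_Y\otimes e_Y\simeq e_Y$.

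What is missing is a comparison of $\cat D$ with $\widehat{\cat C}_Y$. Let $\Psi\colon\cat D\to\widehat{\cat C}_Y$ be the cocontinuous extension of the inclusion $\cat L\subseteq\widehat{\cat C}_Y$, so that $\Psi\circ\Ind(L|_{\cat K})\simeq L$.
\begin{itemize}
\item For $k\in\cat L_{Y_1}\simeq\cat K_Y$, the map $\Hom_{\cat D}(k,M)\to\Hom_{\widehat{\cat C}_Y}(k,\Psi M)$ is an equivalence. It is one for $M\in\cat L$, and $k$ is compact on both sides.
\item Hence $\Psi$ kills $(\cat L_{Y_1})^{\perp}$ and induces $\widehat{\cat D}_{Y_1}\simeq\widehat{\cat C}_Y\simeq\Ind(\cat K_Y)$.
\item So $\Psi^R\Psi\simeq\widehat{(-)}_{Y_1}$, and $\Hom_{\cat D}(e_{Y_1},\Ind(L|_{\cat K})(X))\simeq\Hom_{\widehat{\cat C}_Y}(Le_Y,LX)$ for $X=\unit[c^{-1}]$.
\end{itemize}
Two-out-of-three with the formal equivalence above then finishes. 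You already have every ingredient: compactness of $L(\cat K_Y)$, $\cat L_{Y_1}\simeq\cat K_Y$, and detection by maps out of $\cat K_Y$. They must be applied in $\Ind(\cat L)$, and this, not the topology, is where the real work lies.

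\paragraph{The topological step also fails as written.} The sets $\supp(La)$, $a\in\cat K_Y$, need not generate the closed sets of $Y_1$ in the subspace topology. In fact the homeomorphism fails for Thomason subsets that are not closed. Take $\cat K=\Der(\bbZ)^c$ and $Y$ the set of closed points.
\begin{itemize}
\item Then $\widehat{\unit}_Y\simeq\prod_p\bbZ_p$.
\item For every set $S$ of primes, the idempotent summand $\unit_S\coloneqq\prod_{p\in S}\bbZ_p$ of the unit is dualizable in $\widehat{\cat C}_Y$.
\item $\unit_S\otimes L(\bbZ/p)$ is $L(\bbZ/p)$ if $p\in S$ and $0$ otherwise. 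Hence $\supp(\unit_S)\cap Y_1=\varphi|_{Y_1}^{-1}(S)$.
\end{itemize}
Thus $Y_1$ is discrete, while $Y$ carries the cofinite topology, so $\varphi|_{Y_1}$ is not a homeomorphism. In the stated generality, the homeomorphism the paper imports from Balmer--Sanders fails, and with it the statement. One needs $Y=\supp(a)$ for some $a\in\cat K$, which is the case in all of the paper's examples.

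In that case your idea does work. Then $Y_1=\supp(La)$, and for $d\in\cat L$ one has $\supp(d)\cap Y_1=\supp(d\otimes La)=\varphi^{-1}(\supp(a'))$, where $d\otimes La\simeq L(a')$ with $a'\in\cat K_Y$. So the bijection $\varphi|_{Y_1}$ is a closed map, hence a homeomorphism.
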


This result is fundamental for us: the left-hand side is often the conceptual object of interest, while the right-hand side is typically easier to compute. For the remainder of this introduction, we abbreviate the left-hand side to $\Spf(\widehat{\cat C}^{\dual}_Y)$, suppressing the implicit Thomason subset $\varphi^{-1}(Y)$.

When $\Spc(\cat K)$ is noetherian, the formal spectrum admits a categorical refinement, in the following sense. 
\begin{ThmAlpha}[\Cref{thm:categorical-formal-sheaf,cor:categorical-formal-structure-sheaf}]\label{thm:categorical-formal-sheaf-intro}
Suppose that $\Spc(\cat K)$ is noetherian. There is a sheaf
\[
\cat O_{\Spf(\cat K,Y)}^{\infty}\colon\Open(Y)^{\op}\longrightarrow\CAlg(\Pr\nolimits_{\mathrm{st}}^L)
\]
such that, for every open subset $U \subseteq \Spc(\cat K)$, there is a natural symmetric monoidal equivalence
\[
\cat O_{\Spf(\cat K,Y)}^{\infty}(U \cap Y)\simeq\widehat{\cat C(U)}_{Y \cap U}.
\]
Moreover, applying endomorphisms of the tensor unit, taking homotopy groups sectionwise, and sheafifying recovers the structure sheaf $\mathcal O_{\Spf(\cat K,Y)}$.
\end{ThmAlpha}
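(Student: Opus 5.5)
We construct the sheaf on the basis of quasi-compact opens and then extend. For an open $U\subseteq\Spc(\cat K)$, let $\cat C(U)$ be the finite localization of $\cat C=\Ind(\cat K)$ away from the Thomason closed complement $Z=\Spc(\cat K)\setminus U$. Because $\Spc(\cat K)$ is noetherian, every open is quasi-compact and every subset is Thomason, so this is defined for all $U$ without restriction. It is a rigidly-compactly generated tt-category with $\Spc(\cat C(U)^c)\cong U$. Inside it we take the completion at the Thomason subset $Y\cap U$, which gives $\widehat{\cat C(U)}_{Y\cap U}\in\CAlg(\Pr\nolimits^L_{\mathrm{st}})$. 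For $V\subseteq U$, the restriction functor is the composite of the smashing localization $\cat C(U)\to\cat C(V)$ with completion. Since completion at $Y\cap V$ inverts the maps that completion at $Y\cap U$ inverts (a $(Y\cap U)$-local equivalence becomes a $(Y\cap V)$-local equivalence after localizing), the universal property supplies a symmetric monoidal left adjoint $\widehat{\cat C(U)}_{Y\cap U}\to\widehat{\cat C(V)}_{Y\cap V}$. These functors compose coherently because each is characterized by a localization universal property. This gives a presheaf on $\Open(\Spc(\cat K))$. It depends only on $U\cap Y$: if $U\cap Y=U'\cap Y$, then the restrictions from $U$ and $U'$ to $U\cap U'$ are equivalences on completions, because completion only detects the torsion part supported on $Y$. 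So the presheaf descends to $\Open(Y)$ with the subspace topology, and $\cat O^\infty(U\cap Y)=\widehat{\cat C(U)}_{Y\cap U}$ holds by construction.

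The main obstacle is the sheaf condition. By noetherianity, every open cover of a quasi-compact open has a finite subcover, so it suffices to check the empty cover and binary covers $U=U_1\cup U_2$. For these we need the square relating $\widehat{\cat C(U)}$, $\widehat{\cat C(U_1)}$, $\widehat{\cat C(U_2)}$ and $\widehat{\cat C(U_{12})}$ to be a pullback in $\Pr\nolimits^L_{\mathrm{st}}$.

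First, $\cat C(U)\simeq\cat C(U_1)\times_{\cat C(U_{12})}\cat C(U_2)$ holds by the standard Mayer–Vietoris/Zariski descent for finite localizations: a pair of complementary smashing idempotents gives a recollement-type fracture square. Next we pass to completions via the equivalence $\widehat{\cat C}_Y\simeq\Gamma_Y\cat C$ between complete and torsion objects (local duality). Torsion objects are the localizing ideal generated by the compacts supported on $Y$, so the problem reduces to descent for the torsion subcategories $\Gamma_{Y\cap U_i}\cat C(U_i)$. These are the preimages of a single ideal under the Mayer–Vietoris equivalence, so they also form a pullback. Transporting back along the local-duality equivalences is compatible with restriction, since restriction commutes with $\Gamma$ and with $\Lambda$ on torsion objects. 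This gives the pullback of completed categories. The subtle point is that the limit is computed in $\Pr\nolimits^L$ while the transition functors are the completed ones; we will check this on objects through the fracture square $\Lambda X\to\Lambda L_{U_1}X\times\Lambda L_{U_2}X\rightrightarrows\Lambda L_{U_{12}}X$, which is exact because $\Lambda$ preserves finite limits.

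For the final statement, we apply $\pi_*\End(\unit)$ sectionwise. On $U$ this gives $\pi_*\End(\widehat{\unit}_{Y\cap U})$ in $\cat C(U)$. We compare it with the definition of $\mathcal O_{\Spf(\cat K,Y)}$ in \Cref{def:formal-balmer}, presumably the sheafification of the graded endomorphism rings of the completed local units. We check that the two presheaves agree on a basis, using \Cref{thm:completion-theorem} to identify completed units computed in $\widehat{\cat C}^{\dual}_Y$ with those built from $\cat K$. Then sheafifying gives the claimed isomorphism of Dirac ringed spaces.
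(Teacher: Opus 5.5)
Your argument is correct, but it is organized differently from the paper's. The paper never builds the presheaf $U\mapsto\widehat{\cat C(U)}_{Y\cap U}$ by hand. It sets $\mathcal F_Y(V)\coloneqq\widehat{\cat C}_Y\otimes_{\cat C}\mathcal O_{\cat K}(V)$ and gets the sheaf property from a cited base-change theorem for the Zariski sheaf of categories $\mathcal O_{\cat K}$ (\Cref{prop:completed-category-sheaf}). It identifies the sections as $\Mod_{Lf_Z}(\widehat{\cat C}_Y)\simeq\widehat{\cat C(V)}_{Y\cap V}$ (\Cref{Lem:sheaf-identification}). It checks that restriction between opens with the same trace on $Y$ is extension of scalars along the equivalence $Lf_{Z_2}\to Lf_{Z_1}$ of \Cref{lem:completed-units-well-defined}. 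It then descends to $Y$ with the general gluing statement \Cref{Lem:subspace-gluing}.

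You instead work in the poset of symmetric monoidal localizations of $\cat C$, which is why your coherence claim is fine, and you prove Mayer--Vietoris directly. The ingredients are Zariski descent for the finite localizations $\cat C(U)$, and the observation that $(Y\cap U)$-torsion is the local condition $f_Y\otimes X\simeq0$, so the torsion subcategories form a pullback. The last ingredient is local duality, under which completed restriction $M\mapsto\widehat{(M|_V)}_{Y\cap V}$ corresponds to plain restriction of torsion objects.

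Your route is more self-contained and explains \emph{why} completion preserves descent: complete objects are torsion objects in disguise, and torsion is local. Zariski descent for $\cat C(U)$ is its only external input. The paper's route gives the conceptual formula $\cat O^{\infty}_{\Spf(\cat K,Y)}\simeq i^{-1}(\widehat{\cat C}_Y\otimes_{\cat C}\mathcal O_{\cat K})$. It also gives a module-theoretic description in which the tensor units are the idempotent algebras $Lf_Z$, which makes \Cref{cor:categorical-formal-structure-sheaf} essentially formal.

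A few points to tighten.
\begin{enumerate}
\item Reducing to the empty cover and binary covers requires the excision criterion for $\infty$-categorical sheaves. The paper uses Aoki's version, whose directed-union condition is automatic here because $Y$ is noetherian, being a subspace of a noetherian space. You should also say that a binary cover $W=W_1\cup W_2$ of an open of $Y$ lifts to $U_1\cup U_2$ in $\Spc(\cat K)$. This is legitimate because your presheaf depends only on $U\cap Y$.
\item Not every subset of a noetherian space is Thomason; only the specialization-closed ones are. You only need $Z$ closed and $Y\cap U$ Thomason in $U$.
\item Your worry about limits in $\Pr\nolimits^L_{\mathrm{st}}$ is unfounded. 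Limits there, and in $\CAlg(\Pr\nolimits^L_{\mathrm{st}})$, are computed on underlying $\infty$-categories. So the transported torsion pullback already finishes the argument, and the fracture square adds nothing.
\item \Cref{thm:completion-theorem} plays no role in the last step, because the definition of $\mathcal O_{\Spf(\cat K,Y)}$ already completes in $\cat C$. What you need is that the $(Y\cap U)$-completion of $\unit_U$ in $\cat C(U)$ is $\ihom{e_Y\otimes f_Z,f_Z}\simeq\ihom{e_Y,f_Z}=\widehat{(\unit_U)}_Y$. You also need $\Hom_{\cat C(U)}(\unit_U,-)\simeq\Hom_{\cat C}(\unit,-)$ on $U$-local objects, and compatibility with the restriction maps induced by $f_{Z_2}\to f_{Z_1}$.
\end{enumerate}
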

\separ

A basic computation in tt-geometry is the Hopkins--Neeman theorem: for any commutative noetherian ring $R$ one has a canonical identification $\Spc(\Der(R)^c)\cong \Spec(R)$, under which tt-support agrees with ordinary (homological) support. Since formal completion in algebraic geometry is taken along a closed subset, we fix an ideal $I\subseteq R$ and consider the closed subset $V(I)\subseteq \Spec(R)\cong \Spc(\Der(R)^c)$. Our formal spectrum recovers the usual affine formal scheme:

\begin{ThmAlpha}[\Cref{thm:formal-hn}]\label{thm:formal-hn-intro}
Let $R$ be a commutative noetherian ring and $I \subseteq R$ an ideal. There is a natural isomorphism of formal schemes
\[
\rho \colon \Spf\bigl(\Der(R)^c,V(I)\bigr) \xrightarrow{\sim} \Spf(R,I).
\]
Consequently, the completion of $\Der(R)$ at $V(I)$ satisfies
\[
\Spf\bigl(\widehat{\Der(R)}^d_{V(I)}\bigr) \xrightarrow{\sim} \Spf(R^{\wedge}_I).
\]
\end{ThmAlpha}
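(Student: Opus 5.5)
The plan is to build $\rho$ in two layers: first on underlying spaces using the Hopkins--Neeman theorem, then on structure sheaves by comparing sections on a basis of opens. The second assertion will follow formally from \Cref{thm:intro:a}.

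\emph{Underlying spaces.} The Hopkins--Neeman theorem gives a homeomorphism $\Spc(\Der(R)^c)\cong\Spec(R)$, namely Balmer's comparison map, under which tt-support corresponds to homological support. Hence the Thomason subset $V(I)$ of $\Spc(\Der(R)^c)$ corresponds to the closed subset $V(I)\subseteq\Spec(R)$, and we obtain a homeomorphism of underlying spaces $V(I)\cong|\Spf(R,I)|$. Since $R$ is noetherian, both spaces are noetherian. Opens of the form $D(f)\cap V(I)$ with $f\in R$ form a basis, and on both sides they are the quasi-compact opens.

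\emph{Structure sheaves.} Unwinding \Cref{def:formal-balmer}, the structure sheaf $\mathcal{O}_{\Spf(\Der(R)^c,V(I))}$ is the sheafification of the presheaf
\[
U\cap V(I)\longmapsto \pi_*\End\bigl(\widehat{\unit}_{V(I)\cap U}\bigr),
\]
with the completion taken in $\cat C(U)$. (Equivalently, by \Cref{thm:categorical-formal-sheaf-intro}, it is obtained from the categorical sheaf $\cat O^\infty$.) For $U=D(f)$ the following identifications hold.
\begin{enumerate}
\item $\cat C(U)\simeq\Der(R_f)$, the finite localization away from $V(f)$.
\item $V(I)\cap U=V(IR_f)$.
\item Completion at $V(IR_f)$ is derived $I$-adic completion in $\Der(R_f)$.
\end{enumerate}
Therefore the section ring is $\pi_*$ of the derived $I$-completion of $R_f$. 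Since $R_f$ is noetherian, the Greenlees--May computation shows this is $L_0$-completion concentrated in degree $0$, and it equals the classical completion $(R_f)^\wedge_I$. This is precisely the value of $\mathcal O_{\Spf(R,I)}$ on $D(f)\cap V(I)$ under our conventions in \Cref{ref:def-formal-spectrum-affine}.

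I will then check two compatibilities:
\begin{enumerate}
\item The identifications are compatible with restriction along $D(fg)\subseteq D(f)$. This holds because both restriction maps are induced by the localization $R_f\to R_{fg}$ followed by completion.
\item The identifications are natural, i.e.\ they are induced by Balmer's $\rho$.
\end{enumerate}
With these in hand, the two presheaves agree on a basis. Because the right-hand side is already a sheaf, the sheafifications agree, which gives the isomorphism $\rho^\#$. The graded sheaf is concentrated in degree zero, so this is an isomorphism of Dirac ringed spaces between honest ringed spaces. The stalks are local rings, so the isomorphism is automatically one of formal schemes.

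\emph{Consequence.} The second statement follows by composing three isomorphisms:
\begin{enumerate}
\item the isomorphism of \Cref{thm:intro:a} for $\cat K=\Der(R)^c$, $Y=V(I)$;
\item the isomorphism $\rho$ just constructed;
\item the classical isomorphism $\Spf(R^\wedge_I,IR^\wedge_I)\cong\Spf(R,I)$.
\end{enumerate}

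\emph{Main obstacle.} I expect the main difficulty to be the middle step. One must verify that the completed section ring in the definition really is computed by derived completion of $R_f$, including that completion commutes appropriately with the finite localization to $U$. One must also check that the resulting identification is natural in $f$, rather than only an abstract isomorphism on each basic open. I would handle this by writing completion as $\ihom{\Gamma_{V(I)}\unit,-}$ with $\Gamma_{V(I)}\unit$ the stable Koszul complex on generators of $I$, which makes naturality in $R_f$ transparent.
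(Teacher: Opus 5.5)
Your proposal is correct and follows essentially the paper's argument: Hopkins--Neeman on underlying spaces, identification of the sections over $\overline D(s)$ with $R[1/s]^\wedge_I$ via flatness and the Greenlees--May computation, sheafification, and then \Cref{thm:intro:a} together with the classical completion isomorphism for the second statement. The one difference is cosmetic. The paper completes $R[1/s]$ directly inside $\Der(R)$, treating it as a flat $R$-module, and so never needs the compatibility of completion with finite localization that you identify as the main obstacle; in your route, that compatibility is exactly what \Cref{Thm:spf-open-restriction} provides.
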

The completed category $\widehat{\Der(R)}_{V(I)}$ here is the usual complete derived category of a noetherian ring at an ideal; see \Cref{exa:derived-complete-ring}.

We next extend this affine computation to noetherian schemes. The key input is a locality statement for the formal spectra with respect to restriction to quasi-compact opens. If $U\subseteq \Spc(\cat K)$ is quasi-compact with closed complement $Z$, recall that $\cat K(U)\coloneqq (\cat K/\cat K_Z)^{\natural}$ denotes the idempotent-completed Verdier quotient (cf.\ \Cref{rem:verdier-quotient}). Then we prove:

\begin{ThmAlpha}[\Cref{Thm:spf-open-restriction}]\label{Thm:spf-open-restriction-intro}
Let $\cat K$ be a $2$-ring and $Y\subseteq \Spc(\cat K)$ a Thomason subset. Let $U \subseteq \Spc(\cat K)$ be a quasi-compact open subset and let $q_U \colon \cat K\to \cat K(U)$ be the canonical functor. Set $Y'\coloneqq Y\cap U$, viewed as a Thomason subset of $\Spc(\cat K(U))\cong U$. Then restriction along $Y'\hookrightarrow Y$ induces an isomorphism of Dirac ringed spaces
\[
\Spf(\cat K(U),Y') \xrightarrow{\cong} \Spf(\cat K,Y)\big|_{Y'}.
\]
\end{ThmAlpha}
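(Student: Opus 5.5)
The plan is to compare the two Dirac ringed spaces on a basis of quasi-compact opens, where both structure presheaves come from the same completed local categories. Since sheafification only depends on a presheaf's values on a basis, this will suffice.

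First, the underlying spaces. The functor $q_U$ induces a homeomorphism $\Spc(q_U)\colon\Spc(\cat K(U))\xrightarrow{\cong} U\subseteq\Spc(\cat K)$ (cf.\ \Cref{rem:verdier-quotient}). Under this map, supports of objects of $\cat K(U)$ correspond to the traces on $U$ of supports of objects of $\cat K$; for idempotent summands we use that every object of $\cat K(U)$ is a summand of the image of an object of $\cat K$. Hence Thomason subsets of $\Spc(\cat K(U))$ are exactly the sets $Y''\cap U$ with $Y''$ Thomason in $\Spc(\cat K)$. In particular $Y'=Y\cap U$ is Thomason in $\Spc(\cat K(U))$, and the underlying space of $\Spf(\cat K(U),Y')$ is identified with the open subspace $Y'$ of $Y$. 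So $\Spf(\cat K,Y)|_{Y'}$ has the same underlying space, and what remains is to identify the structure sheaves.

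Second, I unwind \Cref{def:formal-balmer}. I expect $\mathcal O_{\Spf(\cat K,Y)}$ to be the sheafification on $Y$ of a presheaf whose value at $V\cap Y$, for $V\subseteq\Spc(\cat K)$ quasi-compact open, is the graded endomorphism ring of the unit in the completion of $\cat C(V)=\Ind(\cat K(V))$ at $Y\cap V$. If it is instead phrased as a limit over Koszul-type objects supported in $Y$, I use the same description. Opens of the form $V\cap Y$ with $V\subseteq U$ quasi-compact form a basis of $Y'$, so it suffices to produce isomorphisms of these section rings, natural in $V$. For such $V$, with closed complement $Z_V\supseteq Z$, the key input is a canonical equivalence
\[
\cat K(U)(V)\;\simeq\;\cat K(V)
\]
compatible with the functors from $\cat K$. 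To obtain it, observe that $\cat K_Z\subseteq\cat K_{Z_V}$, and that the image of $\cat K_{Z_V}$ in $\cat K(U)$ generates, as a thick ideal, the ideal supported on $Z_V\cap U$. Iterated Verdier quotients compose up to idempotent completion, which gives the equivalence. On Ind-categories this says that finite localizations compose: $\Ind(\cat K(U))\simeq\cat C(U)$, and the further finite localization away from $Z_V\cap U$ is $\cat C(V)$. Finally, the Thomason subset $Y'\cap V$ of $\Spc(\cat K(U))$ corresponds to $Y\cap V$, so the two completions coincide as full subcategories with the same localization. The graded rings $\pi_*\End(\unit)$ therefore agree, naturally in $V$ since everything is induced by the canonical functors.

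The main obstacle is this identification of the completions. Completion at $Y\cap V$ is the Bousfield localization with respect to the localizing ideal generated by the compact objects supported in $Y\cap V$. I must check that the compact objects of $\cat C(U)$ supported in $Y'\cap V$ and those of $\cat C(V)$ supported in $Y\cap V$ generate the same localizing ideal after base change. I would first prove this using that every compact object of $\cat K(U)$ is a summand of the image of one from $\cat K$, together with the support compatibility from the first step. Once the basis-level isomorphisms are natural and compatible with restriction, they induce an isomorphism of sheafifications $\mathcal O_{\Spf(\cat K(U),Y')}\cong\mathcal O_{\Spf(\cat K,Y)}|_{Y'}$. Since the map on spaces is the identity of $Y'$, this gives the isomorphism of Dirac ringed spaces.
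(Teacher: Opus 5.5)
Your outline is reasonable, but the step that carries all the content is the one you only ``expect''. \Cref{def:formal-balmer} does not define the sections over $\overline U(c)$ as $\pi_*\End$ of the unit of $\widehat{\cat C(V)}_{Y\cap V}$. It defines them as $\pi_*\Hom_{\cat C}(\unit,\widehat{(\unit[c^{-1}])}_Y)\cong\pi_*\Hom_{\cat C}(e_Y,\unit[c^{-1}])$. Here the completion is taken in the big category $\cat C$ at $Y$, and $\unit[c^{-1}]$ is regarded as an object of $\cat C$.

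Getting from this to your intrinsic description is not mere unwinding. Write $L_V\dashv\iota_V$ for the finite localization $\cat C\to\cat C(V)$. You need $\iota_V(\unit_V)\simeq\unit[c^{-1}]$ and the compatibility $L_V(e_Y)\simeq e_{Y\cap V}$ of Balmer--Favi idempotents \cite[Proposition~5.11]{BalmerSanders17}. Then
\[
\End_{\widehat{\cat C(V)}_{Y\cap V}}(\unit)\simeq\Hom_{\cat C(V)}(e_{Y\cap V},\unit_V)\simeq\Hom_{\cat C(V)}(L_Ve_Y,\unit_V)\simeq\Hom_{\cat C}(e_Y,\unit[c^{-1}]),
\]
and you must also check that this identification is multiplicative. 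On the top open $Y\cap V$ of $\Spc(\cat K(V))$, this identification is literally the basis-level content of the theorem with $U$ replaced by $V$. So, as written, your argument assumes the heart of what is to be proved.

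By contrast, the step you flag as the main obstacle is harmless. The tt-equivalence $\cat K(U)(V')\simeq\cat K(V)$ under $\cat K$ is \Cref{lem:open-restriction-transitivity}. It induces a homeomorphism of spectra compatible with the maps to $\Spc(\cat K)$. Hence it matches $Y'\cap V'$ with $Y\cap V$, and therefore matches the thick ideals, the localizing ideals they generate, and the completions.

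Once you supply the adjunction argument above, the detour through $\cat C(V)$ and transitivity is unnecessary: run the argument directly for $L\colon\cat C\to\cat C(U)$ with right adjoint $\iota$. Write $U=U(d)$ and replace $c$ by $c\oplus d$, so that $U(c)\subseteq U$. Then $\unit[c^{-1}]$ is $U$-local, i.e.\ $\unit[c^{-1}]\simeq\iota(\unit_{\cat C(U)}[c^{-1}])$, and $L(e_Y)\simeq e_{Y'}$ gives
\[
\Hom_{\cat C}(e_Y,\unit[c^{-1}])\simeq\Hom_{\cat C(U)}(e_{Y'},\unit_{\cat C(U)}[c^{-1}]).
\]
This is exactly the comparison of the two defining presheaves on $\overline U(c)$, and it is the paper's proof. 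It also shows directly that the isomorphism is the map induced by functoriality along $q_U$ (\Cref{lem:functoriality}), which is what the statement asserts. Your abstractly assembled isomorphism would still have to be compared with that canonical map. What your route buys, once the displayed identification is proved, is the conceptual fact that sections over $V\cap Y$ depend only on $(\cat K(V),Y\cap V)$. From that fact the theorem follows formally from transitivity of localizations.
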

With \Cref{Thm:spf-open-restriction-intro} in hand, the affine computation globalizes by covering a noetherian scheme by finitely many affine opens and checking that the resulting local identifications glue on overlaps. This yields a formal version of Thomason's theorem.
\begin{ThmAlpha}[\Cref{thm:global-formal-hn}]\label{thm:global-formal-hn-intro}
Let $X$ be a noetherian scheme, and let $Z\subseteq X$ be a closed subset. Then there is a natural isomorphism of formal schemes
\[
\Phi \colon \Spf\bigl(\Der_{\qc}(X)^c,Z\bigr) \xrightarrow{\sim} \widehat{X}_Z,
\]
where $\widehat{X}_Z$ denotes the formal completion of $X$ along $Z$.
\end{ThmAlpha}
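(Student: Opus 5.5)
We prove the global statement by reducing to the affine case, Theorem~\ref{thm:formal-hn-intro}, via the locality result Theorem~\ref{Thm:spf-open-restriction-intro}, and then gluing.

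First we set up notation. Let $\cat K=\Der_{\qc}(X)^c$. By Thomason's classification and Balmer's reconstruction (\Cref{exa:balmer-scheme}) there is a homeomorphism $\Spc(\cat K)\cong X$ under which tt-support matches cohomological support. Since $Z$ is closed and $X$ is noetherian, $Z$ is Thomason, so $\Spf(\cat K,Z)$ is defined and its underlying space is $Z$, which is also the underlying space of $\widehat X_Z$.

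Next we identify the local pieces. Cover $X$ by finitely many affine opens $U_i=\Spec R_i$. Each $U_i$ is quasi-compact, and Thomason--Trobaugh localization gives $\cat K(U_i)\simeq \Der_{\qc}(U_i)^c\simeq \Der(R_i)^c$, compatibly with the homeomorphism $\Spc(\cat K(U_i))\cong U_i$. Theorem~\ref{Thm:spf-open-restriction-intro} then yields
\[
\Spf(\cat K,Z)|_{Z\cap U_i}\cong \Spf(\Der(R_i)^c,Z\cap U_i).
\]
Writing $Z\cap U_i=V(I_i)$, Theorem~\ref{thm:formal-hn-intro} identifies the right-hand side with $\Spf(R_i,I_i)=(\widehat X_Z)|_{Z\cap U_i}$. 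This produces local isomorphisms $\Phi_i$.

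The main step is to check that the $\Phi_i$ agree on overlaps, so that they glue to a global isomorphism of Dirac ringed spaces. We would not compare the affine isomorphisms on $U_i\cap U_j$ directly, since that intersection need not be affine. Instead we use naturality, in two parts.

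\begin{itemize}
\item \emph{Compatibility of restrictions.} The restriction isomorphism of Theorem~\ref{Thm:spf-open-restriction-intro} is compatible with composites of restrictions $\cat K\to\cat K(U)\to\cat K(V)$ for quasi-compact opens $V\subseteq U$.
\item \emph{Naturality of $\rho$.} The map $\rho$ of Theorem~\ref{thm:formal-hn-intro} is natural in the pair $(R,I)$, in particular for localizations $R\to R_f$, which correspond to the restrictions $\Der(R)^c\to\Der(R_f)^c$.
\end{itemize}

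To use these, refine the cover of each overlap $U_i\cap U_j$ by basic affines $D(f)\subseteq U_i$ that are simultaneously basic in $U_j$. This is possible because affine opens that are principal in both $U_i$ and $U_j$ form a basis of $U_i\cap U_j$.

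On each such $W$, both $\Phi_i|_W$ and $\Phi_j|_W$ are identified with the affine isomorphism $\rho$ for $\Spf(\cat K(W),Z\cap W)$. Because both are morphisms of sheaves, agreement on this basis suffices. The maps then glue on the topological side as the identity of $Z$, and the sheaf maps glue by the sheaf property.

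The main obstacle is making $\rho$ genuinely natural, rather than natural only up to choice. We would try to construct $\rho$ canonically: at a point, compare the completed structure sheaf with $\varprojlim_n R/I^n$ using the comparison map $R\to \pi_0\End(\unit)$ of Balmer's structure sheaf, which is manifestly functorial. Naturality then comes for free. Finally, since $\Phi$ is a locally defined isomorphism of locally ringed spaces between formal schemes, it is an isomorphism of formal schemes.
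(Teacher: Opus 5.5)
Your proposal is correct and follows essentially the same route as the paper. Like the paper, you obtain local identifications via \Cref{Thm:spf-open-restriction}, Thomason--Trobaugh localization and \Cref{thm:formal-hn}, and then check agreement on overlaps on an affine basis, using transitivity of open restrictions (\Cref{lem:open-restriction-transitivity}) and naturality of Balmer's comparison map under localization. The differences are cosmetic: the paper fixes an ideal of definition $\mathcal J$ and sets $I_W=\Gamma(W,\mathcal J|_W)$, so the ideals are automatically compatible under restriction (with your chart-by-chart choice of $I_i$ you should add that, $R$ being noetherian, adic completion depends only on $\sqrt{I}$); it also uses arbitrary affine $W\subseteq U_\alpha\cap U_\beta$ rather than opens principal in both charts.
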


\separ

Our second main family of examples comes from chromatic homotopy theory. Fix a prime $p$ and a height $n\ge 0$, and write $\Sp_n$ for the $E(n)$-local stable homotopy category with compact objects $\Sp_n^c$. By work of Hovey--Strickland, the Balmer spectrum $\Spc(\Sp_n^c)$ is the finite chain of primes $\cat P_0 \supsetneq \cat P_1 \supsetneq \cdots \supsetneq \cat P_n$ indexed by chromatic height, and the structure sheaf on the basic opens can be described in terms of the localizations $L_kS^0$; see \Cref{thm:spec-en}. For each $0\le h\le n-1$ we consider the Thomason subset
\[
Y_{h+1}=\{\cat P_{h+1},\ldots,\cat P_n\}\subseteq \Spc(\Sp_n^c),
\]
corresponding to spectra of type at least $h{+}1$. Completion at $Y_{h+1}$ recovers the local category $\Sp_{K(h+1)\vee\cdots\vee K(n)}$, and we compute the following:

\begin{ThmAlpha}[\Cref{thm:formal-chromatic}]\label{thm:formal-chromatic-intro}
Fix an integer $0\le h\le n-1$. Then the formal spectrum $\Spf\bigl(\Sp_{K(h+1)\vee\cdots\vee K(n)}^{\dual}\bigr)$ has underlying space $Y_{h+1}$, and its structure sheaf on basic opens is given by
\[
\mathcal O(\overline U_k)\cong \pi_*L_{K(h+1)\vee\cdots\vee K(k)}S^0
\qquad (h+1\le k\le n),
\]
for the opens $\overline U_k=U(L_nF(k+1))\cap Y_{h+1}$ described in \Cref{thm:spec-en}.
\end{ThmAlpha}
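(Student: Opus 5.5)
The plan is to reduce the statement to the formal spectrum of the pair $(\Sp_n^c, Y_{h+1})$ via \Cref{thm:intro:a}, and then compute that formal spectrum directly from the definition. The first task is to identify the completion. The Thomason subset $Y_{h+1}$ is the support of a type $h{+}1$ generalized Moore spectrum $L_nF(h{+}1)$. So completion of $\Sp_n=\Ind(\Sp_n^c)$ at $Y_{h+1}$ is Bousfield localization at $L_nF(h{+}1)$. Inside $\Sp_n$ this is $L_{K(h+1)\vee\cdots\vee K(n)}$, by the Hovey--Strickland formulas that express $L_{K(h+1)\vee\cdots\vee K(n)}X$ as $\lim_I L_nX\wedge M_I$ over a tower of generalized Moore spectra. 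Thus $\widehat{\cat C}_{Y_{h+1}}\simeq \Sp_{K(h+1)\vee\cdots\vee K(n)}$, and \Cref{thm:intro:a} gives
\[
\Spf\bigl(\Sp^{\dual}_{K(h+1)\vee\cdots\vee K(n)}\bigr)\cong \Spf(\Sp_n^c,Y_{h+1}).
\]
In particular, the underlying space is $Y_{h+1}$.

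Next I compute the structure sheaf of $\Spf(\Sp_n^c,Y_{h+1})$ on the opens $\overline U_k=U_k\cap Y_{h+1}$, where $U_k=U(L_nF(k+1))=\{\cat P_0,\dots,\cat P_k\}$. These are the only nonempty opens of $Y_{h+1}$, since it is a finite chain with the Alexandrov-type topology. Each $U_k$ is quasi-compact, so \Cref{Thm:spf-open-restriction-intro} identifies $\Spf(\Sp_n^c,Y_{h+1})|_{\overline U_k}$ with $\Spf(\Sp_n^c(U_k),Y_{h+1}\cap U_k)$. By \Cref{thm:spec-en}, $\Sp_n^c(U_k)$ has $\Ind$ equal to $\Sp_k$. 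So the sections over $\overline U_k$ are the global sections of the formal spectrum of $\Sp_k^c$ along $\{\cat P_{h+1},\dots,\cat P_k\}$.

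By the definition of the structure sheaf in \Cref{def:formal-balmer}, these global sections should be $\pi_*$ of the endomorphisms of the completed unit in $\widehat{(\Sp_k)}_{Y}$. By the first step applied with $n$ replaced by $k$, this completed unit is $L_{K(h+1)\vee\cdots\vee K(k)}S^0$. One must check that no sheafification alters this value. The open $\overline U_k$ is the smallest open containing its closed point $\cat P_k$, so it is a "local" open and sections are given by the presheaf value (a colimit over opens containing $\cat P_k$ stabilizes at $\overline U_k$). This yields $\mathcal O(\overline U_k)\cong\pi_*L_{K(h+1)\vee\cdots\vee K(k)}S^0$.

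I expect the main obstacle to be the last identification. The concern is whether the completed structure presheaf in \Cref{def:formal-balmer} is literally $\pi_*\End(\widehat{\unit})$ on these opens, or a limit such as $\lim_I \pi_*(\unit\wedge M_I)$ of homotopy of a tower. Such a limit could differ from $\pi_*$ of the limit by $\lim^1$ terms. If the definition is of the latter form, I would use the Milnor sequence together with the Mittag-Leffler property of the towers $\pi_*(L_kS^0\wedge M_I)$. Alternatively, I would invoke \Cref{thm:categorical-formal-sheaf-intro}, since $\Spc(\Sp_n^c)$ is noetherian. That theorem identifies the sections over $\overline U_k$ with the sheafification of $\pi_*\End$ of the unit of $\widehat{\Sp_k}_{Y\cap U_k}$, and this reduces again to the local-open argument above.
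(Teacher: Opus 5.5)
Your proof is correct, but the key step takes a different route from the paper's. The two arguments start the same way. You both use \Cref{thm:completion-theorem}, together with the identification of $Y_{h+1}$-completion of $\Sp_n$ with $L_{K(h+1)\vee\cdots\vee K(n)}$, to reduce to $\Spf(\Sp_n^c,Y_{h+1})$. You both also observe that $\cat P_k$ has no proper open neighbourhood in $\overline U_k$, so sheafification does not change the value there.

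The paper then stays inside $\Sp_n$. It writes $\mathcal O(\overline U_k)\cong\pi_*L_{K(h+1)\vee\cdots\vee K(n)}L_kS^0$ and proves $L_{K(h+1)\vee\cdots\vee K(n)}L_kS^0\simeq L_{K(h+1)\vee\cdots\vee K(k)}S^0$ with a chromatic pullback square from Heard's work. The bottom row of that square vanishes because $K(i)\wedge L_kS^0\simeq 0$ for $i>k$.

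You instead localize first and complete second. The locality theorem (\Cref{Thm:spf-open-restriction}) moves the computation into $\Sp_k\simeq\Ind(\Sp_n^c(U_k))$, where $Y_{h+1}\cap U_k=\supp(L_kF(h+1))$. The same Bousfield-class identification at height $k$, valid since $h+1\le k$, then gives the completed unit directly. Concretely, the fracture square is replaced by the adjunction $\Hom_{\Sp_n}(e_{Y_{h+1}},L_kS^0)\simeq\Hom_{\Sp_k}(L_ke_{Y_{h+1}},L_kS^0)$ together with $L_ke_{Y_{h+1}}\simeq e_{Y_{h+1}\cap U_k}$, which is exactly what the proof of the locality theorem supplies.

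Your route is more structural. It is essentially the argument the paper gives afterwards for the categorical sheaf, $\cat O^{\infty}(\overline U_k)\simeq\widehat{\Sp_k}_{Y_{h+1}\cap U_k}\simeq\Sp_{K(h+1)\vee\cdots\vee K(k)}$. The paper's route avoids the locality theorem and yields the explicit chromatic identity $L_{K(h+1)\vee\cdots\vee K(n)}L_kS^0\simeq L_{K(h+1)\vee\cdots\vee K(k)}S^0$ as a byproduct.

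Three small points:
\begin{enumerate}
\item Your worry about $\lim^1$ terms is unnecessary. \Cref{def:formal-balmer} takes $\pi_*$ of the completed object itself, not a limit of homotopy groups, so no Milnor sequence is needed.
\item Your first step is more cleanly justified, as in \Cref{rem:spk-n}, by the Bousfield class equality $\langle E(n)\wedge F(h+1)\rangle=\langle K(h+1)\vee\cdots\vee K(n)\rangle$. The Hovey--Strickland tower formula you cite is stated for $K(n)$ alone.
\item You should state explicitly that $L_{K(h+1)\vee\cdots\vee K(k)}L_kS^0\simeq L_{K(h+1)\vee\cdots\vee K(k)}S^0$. This is needed because the unit of $\Sp_k$ is $L_kS^0$, not $S^0$.
\end{enumerate}
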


These chromatic examples illustrate one of the motivations for introducing $\Spf(\cat K,Y)$: completion can drastically change the geometry detected by dualizable objects, and the formal spectrum provides a convenient receptacle for the resulting local information. In particular, in the special case where $h = n-1$, we see that the formal spectrum of dualizable objects in the $K(n)$-local category is indeed a single point, as one would hope. We conclude with two further examples, one from equivariant homotopy theory and one from modular representation theory, to illustrate additional behavior of the formal spectrum.
\medskip

\noindent\textbf{Organization of the paper.} In \Cref{sec:formal-schemes} we recall formal schemes and formal completions in algebraic geometry and fix conventions. In \Cref{sec:balmer} we review the Balmer spectrum and its structure sheaf. In \Cref{sec:completion} we recall completion of stable homotopy theories at Thomason subsets, and in \Cref{sec:formal-spectrum} we introduce the formal spectrum $\Spf(\cat K,Y)$ and prove the completion theorem \Cref{thm:completion-theorem}. \Cref{sec:categorical-sheaf} constructs a lift of the structure sheaf to the categorical level. In \Cref{sec:the_formal_hopkins_neeman_theorem,sec:thomason} we establish the formal version of the Hopkins--Neeman and Thomason theorems. \Cref{sec:a_comparison_map} produces a comparison map, analogous to Balmer's for the ordinary spectrum and in \Cref{sec:chromatic} we compute the formal spectra arising in chromatic homotopy theory. Finally, we finish in \Cref{sec:further_examples} with the two  examples from equivariant homotopy theory and modular representation theory.
\subsection*{Acknowledgments}
We thank Anish Chedalavada and Leovigildo Alonso Tarrio for helpful discussions. DH thanks Utrecht University and MPIM Bonn, where parts of this work were carried out.

\subsection*{Use of generative AI}
Generative AI was used for proofreading an earlier version of this paper, as well as help with some of the arguments in \Cref{sec:categorical-sheaf}. The authors take full responsibility for the arguments in this paper. 
\section{Formal schemes and formal completions}\label{sec:formal-schemes}
We begin by recalling the definition of the formal spectrum of a commutative noetherian ring. We do this to fix our conventions, which may differ slightly from the standard literature.

\begin{Def}\label{ref:def-formal-spectrum-affine}
Let $R$ be a commutative noetherian ring and let $I \subseteq R$ be an ideal. The \emph{formal spectrum} of $R$ (with respect to $I$), denoted $\Spf(R,I)$, is the locally ringed space
\[
(|\Spf(R,I)|, \mathcal{O}_{\Spf(R,I)})
\] 
where:
\begin{enumerate}
    \item The underlying topological space $|\Spf(R,I)|$ is the closed subset of the Zariski spectrum given by the vanishing locus of $I$, i.e.,
    \[
    |\Spf(R,I)| \coloneqq V(I) =  \SET{\mathfrak p \in  \Spec(R)}{I \subseteq \mathfrak p}.
    \]
    It has a basis of open subsets of the form
    \[
    \overline{D}(s) \coloneqq D(s) \cap V(I) = \SET{\mathfrak p \in \Spec(R)}{ I \subseteq \mathfrak p,\, s \notin \mathfrak p},
    \]
    where $s \in R$ and
    \[
    D(s) = \SET{\mathfrak p \in \Spec(R)}{s \notin \mathfrak p}.
    \]

    \item The structure sheaf $\mathcal{O}_{\Spf(R,I)}$ is the unique sheaf on $V(I)$ whose value on each basic open set $\overline{D}(s)$ is the $I$-adic completion of the localization $R[1/s]$.
\end{enumerate}
\end{Def}

\begin{Exa}\label{exa:nilpotent-ideal}
Let $R$ be a noetherian ring, and let $I$ be a nilpotent ideal (for example, $I = (0)$). Then it is immediate from the definitions that
\[
\Spf(R,I) \cong \Spec(R),
\] 
the usual Zariski spectrum.
\end{Exa}
\begin{Exa}
Let $R = \mathbb{Z}$ and $I = (p)$ for a prime $p$. Then $\Spf(\mathbb{Z},(p))$ is a one-point locally ringed space with ring of global sections $\mathbb Z_p$.
\end{Exa}
Since the rings arising as homotopy groups below are naturally Dirac rings, we now record the corresponding graded conventions.
\begin{Def}\label{def:dirac-ringed-space}
Following Hesselholt--Pstr\k{a}gowski \cite{HesselholtPstragowskiDiracI}, a \emph{Dirac ring} is a commutative algebra object in graded abelian groups equipped with the Koszul symmetric monoidal structure. Thus, for homogeneous elements $x$ and $y$, one has $xy=(-1)^{|x||y|}yx$. Ideals, prime ideals, and maximal ideals in a Dirac ring are understood to be homogeneous. A Dirac ring is \emph{local} if it has a unique maximal homogeneous ideal.

A \emph{Dirac ringed space} is a topological space $X$ equipped with a sheaf $\mathcal O_X$ of Dirac rings. It is \emph{locally Dirac ringed} if every stalk $\mathcal O_{X,x}$ is a local Dirac ring. A morphism of Dirac ringed spaces consists of a continuous map together with a morphism of sheaves of Dirac rings; it is a morphism of locally Dirac ringed spaces if the induced maps on stalks are local. Ordinary (locally) ringed spaces embed fully faithfully by placing their structure sheaves in degree zero.
\end{Def}

\begin{Def}\label{def:formal-dirac-spectrum}
Let $R_*$ be a noetherian Dirac ring, meaning that every graded ideal is generated by finitely many homogeneous elements, and let $I\subseteq R_*$ be a graded ideal.

For a graded $R_*$-module $M_*$, define its $I$-adic completion by
\[
(M_*)^{\wedge}_I\coloneqq\varprojlim_n M_*/I^nM_*.
\]
The inverse limit is taken in the category of graded $R_*$-modules and is calculated degreewise on the underlying graded abelian groups. When $M_*=R_*$, this completion carries its canonical Dirac ring structure.

Let $\Spec^h(R_*)$ denote the homogeneous Zariski spectrum of $R_*$. The \emph{homogeneous formal spectrum} of $R_*$ along $I$, denoted $\Spf^h(R_*,I)$, is the Dirac ringed space whose underlying space is $V(I)\subseteq\Spec^h(R_*)$ and whose structure sheaf is obtained by sheafifying the assignment
\[
\overline D(s)\longmapsto (R_*[1/s])^{\wedge}_I
\]
for homogeneous $s\in R_*$. 
\end{Def}

\begin{Rem}\label{rem:adic-completion}
We do not assume that $R$ (or $R_*$ in the graded case) is complete with respect to the $I$-adic topology. However, replacing $R$ by its completion $R^{\wedge}_I$ does not change the formal spectrum; there is an isomorphism of locally ringed spaces
\[
\Spf(R^{\wedge}_I, IR^{\wedge}_I) \xrightarrow{\sim} \Spf(R,I)
\]
induced by the completion map $R \to R^{\wedge}_I$. Likewise, for a noetherian Dirac ring $R_*$ and a homogeneous ideal $I\subseteq R_*$, completion induces an isomorphism of Dirac ringed spaces
\[
\Spf^h\bigl((R_*)^{\wedge}_I,I(R_*)^{\wedge}_I\bigr)\xrightarrow{\sim}\Spf^h(R_*,I).
\]
In either case, since the ideal for the formal spectrum is clear, we may simply write $\Spf(R^{\wedge}_I)$ or $\Spf^h((R_*)^{\wedge}_I)$.
\end{Rem}

\begin{Rem}
Although the definition of the formal spectrum of a ring $R$ does not strictly require $R$ to be noetherian, we fix this assumption due to the poorly behaved nature of $I$-adic completion in the non-noetherian case.
\end{Rem}

\begin{Rem}\label{rem:support-of-a-module}
There is a useful way to think about the topological space $|\Spf(R,I)|$. Recall that the support of a (finitely generated) $R$-module $M$ is
\[
\Supp_R(M) = \SET{\mathfrak p \in \Spec(R)}{M_{\mathfrak p} \ne 0}.
\]
Since
\[
S^{-1}(R/I) = 0 \iff S \cap I \ne \varnothing,
\] 
we see that
\[
\Supp_R(R/I) = \SET{\mathfrak p \in \Spec(R)}{I \subseteq \mathfrak p}.
\]
In other words,
\[
|\Spf(R,I)| = \Supp_R(R/I).
\]
\end{Rem}

In the same way that the spectrum of a ring globalizes to define schemes, the formal spectrum globalizes to define formal schemes.

\begin{Def}\label{def:formal-scheme}
Let $(X,\mathcal{O}_X)$ be a locally ringed space. We say that $(X,\mathcal{O}_X)$ is a \emph{noetherian formal scheme} if $|X|$ is quasi-compact and there exists a covering of $|X|$ by open subsets $U_{\alpha}$ such that each ringed space $(U_{\alpha}, \mathcal{O}_{X}|_{U_{\alpha}})$ is isomorphic to
\[
(|\Spf(R_{\alpha}, I_{\alpha})|, \mathcal{O}_{\Spf(R_{\alpha},I_{\alpha})})
\] 
for some noetherian ring $R_{\alpha}$ and ideal $I_{\alpha} \subseteq R_{\alpha}$.
\end{Def}
\begin{Rem}
It is clear from the definition that the condition of being a noetherian formal scheme is local with respect to finite covers. More specifically, if there exists a finite covering $\{ U_{\alpha} \}$ of $X$ such that each $(U_{\alpha}, \mathcal{O}_{X}|_{U_{\alpha}})$ is a noetherian formal scheme, then so is $(X,\mathcal{O}_X)$.
\end{Rem}

\begin{Rem}
We form the category of noetherian formal schemes as the full subcategory of the category of locally ringed spaces whose objects are noetherian formal schemes. In contrast to some other sources, we do not require the structure sheaf to carry a topological structure (i.e., we work with locally ringed spaces rather than topologically ringed spaces).
\end{Rem}

\section{The Balmer spectrum of a tt-category}\label{sec:balmer}
We assume the reader has some familiarity with ordinary tt-geometry, for example, the foundational paper \cite{Balmer05a}. For the benefit of the reader, we briefly recall the basic definitions. We begin by introducing the `small' and `big' variants of triangulated categories that will be used; see also \cite[Section~5]{BCHNPS-descent} for a more thorough discussion.

\begin{Def}\label{def:2ring}
Let
\[
  \tring \coloneqq \CAlg(\Cat^{\mathrm{perf}}_{\infty})_{\mathrm{rig}}
\]
be the $\infty$-category of commutative algebra objects in $\Cat^{\mathrm{perf}}_{\infty}$, the $\infty$-category of small, stable, idempotent-complete $\infty$-categories and exact functors, equipped with the symmetric monoidal structure described in \cite[Section~3.1]{BlumbergGepnerTabuada2013universal}. The subscript $\mathrm{rig}$ denotes the full subcategory spanned by those $\cat K$ for which every object of the underlying symmetric monoidal $\infty$-category is dualizable. A \emph{2-ring} is then an object $\cat K \in \tring$.
\end{Def}

We will also need a `big' variant of this notion.

\begin{Def}
A \emph{stable homotopy theory} is an object of $\CAlg(\Pr^L_{\mathrm{st}})$, i.e., a presentable, symmetric monoidal stable $\infty$-category $(\cat C, \otimes, \unit)$ whose tensor product preserves all colimits separately in each variable. The full subcategory $\cat K \coloneqq \cat C^{\dual}$ of dualizable objects is then a $\tring$ in the sense of \Cref{def:2ring} (see \cite[Lemma~2.5]{NaumannPol2024Separable}). A stable homotopy theory is \emph{rigidly-compactly generated} if it is compactly generated and its subcategory of dualizable objects coincides with its subcategory of compact objects.
\end{Def}
\begin{Rem}\label{rem:why-infty-big}
The homotopy category of a given $\cat K \in \tring$ is a tensor-triangulated category in the usual sense of \cite{Balmer05a}. Likewise, if $\cat C$ is a stable homotopy theory, then its homotopy category is a tensor-triangulated category with small coproducts.

For the definition of the Balmer spectrum $\Spc(\cat K)$ it suffices to work at the level of these underlying triangulated categories. By contrast, our construction of the formal spectrum as a \emph{Dirac ringed space} is most naturally formulated after passing to ``big'' categories, as we will soon describe. We therefore work throughout in the setting of stable $\infty$-categories: among other advantages, Ind-completion is well behaved in this setting, whereas on the level of triangulated categories the Ind-completion need not remain triangulated; see \cite[Remark~5.9]{krause2023completionstriangulatedcategories} for an example.
\end{Rem}
\begin{Rem}\label{Rem:ind-completion}
By taking Ind-categories, any 2-ring can be turned into a rigidly-compactly generated stable homotopy theory. Indeed, set $\cat C \coloneqq \Ind(\cat K)$. As noted in \cite{NaumannPol2024Separable}, this is presentable, stable, and symmetric monoidal by combining \cite[Theorem~5.5.1.1]{HTTLurie} and \cite[Corollary~4.8.1.14]{HALurie}, and it is rigidly-compactly generated by construction. Moreover, the (restricted) Yoneda embedding $j \colon \cat K \to \cat C$ is symmetric monoidal. In fact, if $\Pr^{L,\omega}_{\mathrm{st}}$ denotes the (non-full) symmetric monoidal subcategory of $\Pr^L_{\mathrm{st}}$ spanned by compactly generated stable $\infty$-categories and colimit-preserving functors that preserve compact objects (equivalently, whose right adjoint preserves colimits), then $\Ind$-completion defines a symmetric monoidal equivalence
\[
   \Ind \colon \Cat^{\mathrm{perf}}_{\infty} \xrightarrow{\sim} \Pr\nolimits^{L,\omega}_{\mathrm{st}},
\]
with inverse given by sending $\cat D$ to $\cat D^{\omega}$. See \cite[Section~3.1]{BlumbergGepnerTabuada2013universal} and \cite[Proposition 2.3]{BenMosheCarmeliSchlankYanovski2025Descent}. We will use this equivalence implicitly whenever needed.
\end{Rem}

Having fixed terminology for both the small and big settings, we now give a standard example that will serve as a running model in what follows.

\begin{Exa}
The prototypical example of a 2-ring is the category $\Mod^c_R$ of compact $R$-modules, where $R$ is a commutative ring spectrum. The associated stable homotopy theory is the category $\Mod_R$ of all $R$-modules. Note that if $R = HA$ is an Eilenberg--MacLane spectrum, these correspond to the derived category of perfect complexes of $A$-modules and the unbounded derived category of $A$-modules, respectively.
\end{Exa}

\begin{Def}[Balmer \cite{Balmer05a}]
Let $\cat K \in \tring$. The \emph{spectrum} of $\cat K$ is the set
\[
  \Spc(\cat K) \coloneqq \{\, \cat P \mid \cat P \text{ is a prime thick } \otimes\text{-ideal of } \cat K \,\}.
\]
We topologize this set by declaring a basis of open sets $\{\,U(c)\,\}_{c\in\cat K}$ defined by
\[
  U(c) \coloneqq \SET{\cat P \in \Spc(\cat K)}{c \in \cat P}.
\]
\end{Def}

\begin{Def}
The \emph{support} of an object $a \in \cat K$ is
\[
  \supp(a) \coloneqq \SET{\cat P \in \Spc(\cat K)}{a \notin \cat P} = U(a)^{\complement}.
\]
The sets $\{\supp(a)\}_{a\in\cat K}$ form a basis of closed subsets for the topology on $\Spc(\cat K)$.
\end{Def}

\begin{Rem}\label{rem:functoriality}
The spectrum is functorial \cite[Proposition~3.6]{Balmer05a}: given a symmetric monoidal exact functor $F \colon \cat K \to \cat L$, there is an induced continuous map
\[
\varphi \coloneqq \Spc(F) \colon \Spc(\cat L) \longrightarrow \Spc(\cat K), \qquad 
\cat P \longmapsto F^{-1}(\cat P).
\]
Moreover, $\varphi^{-1}(\supp_{\cat K}(a)) = \supp_{\cat L}(F(a))$ and $\varphi^{-1}(U(c)) = U(F(c))$.
\end{Rem}

One major reason to study the spectrum is its classification of thick tensor ideals. Here, we recall that a Thomason subset is a union of closed subsets, each with quasi-compact complement. 

\begin{Thm}[Balmer, {\cite[Theorem~4.10]{Balmer05a}}]\label{thm:balmer}
Let $\cat K$ be a 2-ring. Then support induces an inclusion-preserving bijection:
\[
\begin{aligned}
\Bigl\{\text{thick $\otimes$-ideals }\cat J\subseteq\cat K\Bigr\}
&\leftrightarrows
\Bigl\{\text{Thomason subsets }Y\subseteq\Spc(\cat K)\Bigr\} \\
\cat J &\longmapsto \supp(\cat J) \coloneqq \bigcup_{x\in\cat J}\supp(x) \\
\cat K_Y \coloneqq \{\,x\in\cat K \mid \supp(x)\subseteq Y\,\} &\longmapsfrom Y.
\end{aligned}
\]
\end{Thm}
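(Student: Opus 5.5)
The plan is Balmer's original argument: show the two assignments are well defined and mutually inverse. In outline, $\cat K_Y$ is a thick ideal by the tensor-triangular axioms for support. $\supp(\cat J)$ is Thomason because supports are closed with quasi-compact complement. The composite $Y\mapsto\supp(\cat K_Y)$ is the identity by a compactness argument. The composite $\cat J\mapsto\cat K_{\supp(\cat J)}$ is the identity by a prime ideal theorem for tensor ideals together with radicality of thick ideals in a rigid category.

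First the well-definedness. The support satisfies $\supp(0)=\varnothing$, $\supp(\unit)=\Spc(\cat K)$, $\supp(a\oplus b)=\supp(a)\cup\supp(b)$, $\supp(\Sigma a)=\supp(a)$, $\supp(a\otimes b)=\supp(a)\cap\supp(b)$ (this one uses primality), and $\supp(c)\subseteq\supp(a)\cup\supp(b)$ for a cofiber sequence $a\to b\to c$. Hence $\cat K_Y$ is a thick $\otimes$-ideal for any subset $Y$. Each $\supp(x)$ is closed with quasi-compact complement $U(x)$. To see $U(x)$ is quasi-compact, I would show $\Spc(\cat K)$ is spectral, or more directly that any cover of $U(x)$ by basic opens $U(c_i)$ has a finite subcover. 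So $\supp(\cat J)$ is a union of Thomason closed sets, hence Thomason. Both maps are clearly inclusion-preserving.

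Next, $\supp(\cat K_Y)=Y$. The inclusion $\subseteq$ is immediate. For $\supseteq$, write $Y=\bigcup Z_i$ with $Z_i$ closed and $Z_i^{\complement}$ quasi-compact. Each $Z_i$ is an intersection of basic closed sets $\supp(a)$, so $Z_i^{\complement}=\bigcup U(a)$, and quasi-compactness gives finitely many $a_1,\dots,a_n$ with $Z_i=\bigcap_k\supp(a_k)=\supp(a_1\otimes\cdots\otimes a_n)$. Thus every Thomason closed set is a support, and $Y\subseteq\supp(\cat K_Y)$.

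Finally, $\cat K_{\supp(\cat J)}=\cat J$, which is the main obstacle. The inclusion $\cat J\subseteq\cat K_{\supp\cat J}$ is trivial. Conversely, suppose $\supp(x)\subseteq\bigcup_{j\in\cat J}\supp(j)$. The key tool is that if $S$ is a $\otimes$-multiplicative set disjoint from a thick ideal $\cat J$, then some prime contains $\cat J$ and misses $S$; one gets this by Zorn's lemma applied to ideals containing $\cat J$ and disjoint from $S$, and checking that a maximal one is prime. Applied to $S=\{x^{\otimes n}\}$, this shows that $\supp(x)\subseteq\supp(\cat J)$ forces $x^{\otimes n}\in\cat J$ for some $n$. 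A quasi-compactness step (using that $\supp(x)$ is closed and that it suffices to check against finitely many $\supp(j)$) reduces to $\supp(x)\subseteq\supp(j)$ for a single $j\in\cat J$. Then $x^{\otimes n}\in\thicko{j}\subseteq\cat J$. Rigidity makes every thick ideal radical, since $x$ is a retract of $x\otimes x^\vee\otimes x$, so $x\in\cat J$. The delicate points are the prime ideal existence lemma and this radicality step.
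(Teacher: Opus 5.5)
Your argument is correct and is essentially Balmer's original proof of \cite[Theorem~4.10]{Balmer05a}, which the paper cites rather than reproves: Thomason closed sets are supports by quasi-compactness of the $U(a)$, and $\cat K_{\supp(\cat J)}=\sqrt{\cat J}=\cat J$ by the prime ideal lemma plus rigidity. The one wrinkle is your ``quasi-compactness step'' reducing to a single $j\in\cat J$: it is unnecessary, because applying the prime ideal lemma directly to $\cat J$ and $S=\{x^{\otimes n}\}$ already gives $x^{\otimes n}\in\cat J$; and as you justified it, it does not work, because $\supp(x)$ is covered by the \emph{closed} sets $\supp(j)$, so Zariski quasi-compactness does not yield a finite subcover.
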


\begin{Rem}\label{rem:verdier-quotient}
We now describe Balmer's structure sheaf on $\Spc(\cat K)$. Given a thick $\otimes$-ideal $\cat J \subseteq \cat K$ as above, we can form the Verdier quotient $\cat K / \cat J$. If $U \subset \Spc(\cat K)$ is a quasi-compact open subset with closed complement $Z$, set
\[
\cat K_Z \coloneqq \SET{a \in \cat K}{\supp(a) \subseteq Z}
\]
and define
\[
\cat K(U) \coloneqq \big( \cat K / \cat K_Z \big)^{\natural},
\]
where $(-)^\natural$ denotes idempotent completion.  This is also known as the Karoubi quotient. There is a natural functor $q_U \colon \cat K \to \cat K(U)$, and we write $\unit_U$ for the image of the unit $\unit \in \cat K$ under $q_U$.
\end{Rem}

\begin{Def}
Let $\cat K\in\tring$ be a $2$-ring, and let $\cat B$ denote the basis of
quasi-compact open subsets of $\Spc(\cat K)$. Define a presheaf
\footnote{Here, and throughout, $\mathrm{gr}\!\CAlg^{\heartsuit}$ denotes
the category of Dirac rings from \Cref{def:dirac-ringed-space}.}
\[
{}_{\mathrm p}\mathcal O_{\Spc(\cat K)}
\colon \cat B^{\op}\longrightarrow\mathrm{gr}\!\CAlg^{\heartsuit}
\]
by
\begin{equation}\label{eq:graded-balmer-sheaf}
{}_{\mathrm p}\mathcal O_{\Spc(\cat K)}(U)
\coloneqq \pi_*\End_{\cat K(U)}(\unit_U).
\end{equation}
Balmer's structure sheaf is the associated sheafification
\[
\mathcal O_{\Spc(\cat K)}
\coloneqq a\bigl({}_{\mathrm p}\mathcal O_{\Spc(\cat K)}\bigr)
\]
on $\Spc(\cat K)$.
\end{Def}

\begin{Rem}\label{rem:verdier-quotients-for-small-and-large-categories}
Let $\cat K$ be a 2-ring and $\cat C \coloneqq \Ind(\cat K)$ the associated stable homotopy theory. Let $U \subseteq \Spc(\cat K)$ be a quasi-compact open subset with closed complement $Z$. Consider the localizing subcategory $\cat L \coloneqq \Loc\langle \cat K_Z \rangle \subseteq \cat C$, and set $\cat C(U) \coloneqq \cat C/\cat L$. Since $\cat K_Z$ is a thick $\otimes$-ideal in $\cat K$ and the tensor product in $\cat C$ preserves colimits separately in each variable, $\cat L$ is a localizing $\otimes$-ideal in $\cat C$. Consequently, there is a unique way to equip $\cat C(U)$ and the localization functor $L\colon \cat C\to \cat C(U)$ with symmetric monoidal structures.

We now identify the compact objects of the localization. The restriction of $L$ to dualizable (equivalently, compact) objects, $L|_{\cat K} \colon \cat K \to \cat C(U)$, lands in $\cat C(U)^c$ (since $L$ preserves compact objects) and vanishes on $\cat K_Z$. By the universal property of the Verdier quotient, this induces a symmetric monoidal functor
\[
\bar{L} \colon \cat K / \cat K_Z \longrightarrow \cat C(U)^c.
\]
The Neeman--Thomason localization theorem \cite[Theorem~2.1]{Neeman92b} \& \cite[Theorem~A.3.12]{MR5009505}\footnote{The former is stated for triangulated categories and the latter for stable $\infty$-categories.} asserts that $\bar{L}$ is fully faithful and that $\cat C(U)^c$ identifies with the idempotent completion of the image of $\bar{L}$. Therefore, passing to idempotent completions yields a symmetric monoidal equivalence
\[
\cat K(U) \coloneqq (\cat K / \cat K_Z)^{\natural} \xrightarrow{\sim} (\cat C(U)^c)^{\natural} \simeq \cat C(U)^c.
\]
\end{Rem}
\begin{Rem}\label{remark:structure-sheaf-from-hom-functor}
If $\unit_U$ denotes the tensor unit of the localized category $\cat C(U)$, then the presheaf from \eqref{eq:graded-balmer-sheaf} can be rewritten as
\begin{equation}\label{eq:complicated-balmer-sheaf}
  U \longmapsto \pi_*\Hom_{\cat C}(\unit, \unit_U),
\end{equation}
where in the right-hand side, $\unit_U$ is implicitly viewed inside $\cat C$ via the fully faithful right adjoint to the localization functor $L$. Although \eqref{eq:complicated-balmer-sheaf} looks slightly less direct than \eqref{eq:graded-balmer-sheaf}, this ambient formulation in $\cat C$ will be crucial for the construction of the formal Balmer spectrum, where we will encounter localizations that do not preserve compact objects, necessitating the use of the large category $\cat C$.
\end{Rem}
\begin{Rem}\label{remark:sheaf-of-categories}
In \cite[Theorem~D and Theorem~F]{aoki2025higherzariskigeometry}, it is shown that for any $2$-Ring $\cat K$ with associated stable homotopy theory $\cat C \coloneqq \Ind(\cat K)$, the functor sending a quasi-compact open $U\subseteq \Spc(\cat K)$ to the $\infty$-category $\cat C(U)$ refines to a sheaf
  \[
    \mathcal{O}_{\cat K} \colon \Open(\Spc (\cat K))^{\mathrm{op}} \longrightarrow \CAlg(\Pr\nolimits^L_{\mathrm{st}}).
  \]
\end{Rem}
\begin{Rem}\label{remark:structure-sheaf-of-ring-spectra}
It follows from \cite[Theorem~4.8.5.11]{HALurie} and \cite[Theorem~4.8.5.16]{HALurie} that the functor
    \[
    \CAlg(\Sp) \longrightarrow \CAlg(\Pr\nolimits_{\mathrm{st}}^L),
    \]
    sending a commutative algebra $A$ in spectra to the stable $\infty$-category $\Mod_A(\Sp)$ of $A$-module spectra, is symmetric monoidal and admits a right adjoint:
    \begin{equation}\label{eq:end-functor}
    \End(\unit_{(-)}) \colon \CAlg(\Pr\nolimits_{\mathrm{st}}^L) \longrightarrow \CAlg(\Sp),
    \end{equation}
    sending a stable homotopy theory $\cat C$ to the commutative algebra in spectra given by the endomorphism spectrum $\Hom_{\cat C}(\unit_{\cat C},\unit_{\cat C})$ (see also \cite[Construction 4.29]{aoki2025higherzariskigeometry}).

    If we compose $\mathcal{O}_{\cat K}$ with $\End(\unit_{(-)})$ we obtain a sheaf
    \[
    \mathcal{O}_{\cat K}^{\unit} \colon \Open(\Spc(\cat K))^{\mathrm{op}} \longrightarrow \CAlg(\Sp)
    \]
    of commutative algebras in spectra. This sheaf is given on a quasi-compact open $U\subseteq \Spc(\cat K)$ by $\Hom_{\cat C(U)}(\unit_{U},\unit_{U})\simeq \Hom_{\cat C}(\unit,\unit_{U})$ in the notation of \cref{rem:verdier-quotients-for-small-and-large-categories}.

    Consequently, Balmer's structure sheaf of Dirac rings is obtained by applying $\pi_*$ sectionwise and then sheafifying:
    \[
    \mathcal O_{\Spc(\cat K)}\simeq a\bigl(\pi_*\mathcal O_{\cat K}^{\unit}\bigr),
    \]
where $a$ denotes sheafification in Dirac rings.
\end{Rem}
\begin{Rem}[Functoriality]
The functoriality of \Cref{rem:functoriality} is compatible with this Dirac ringed space structure. That is, given a tt-functor $F \colon \cat K \to \cat L$, there is a morphism of Dirac ringed spaces
\[
    \Spc(F) \coloneqq \varphi \colon (|\Spc(\cat L)|,\mathcal{O}_{\Spc(\cat L)}) \to (|\Spc(\cat K)|,\mathcal{O}_{\Spc(\cat K)}).
\]
For details, see \cite[Lemma 7.2]{BuanKrauseSolberg07}.
\end{Rem}
\begin{Exa}\label{exa:balmer-scheme}
The most fundamental example comes from work of Hopkins and Neeman in the affine case, as extended to schemes by Thomason \cite{Hopkins87,Neeman92a,Thomason97}. Let $X$ be a quasi-compact and quasi-separated scheme. There is an isomorphism of locally ringed spaces (and hence schemes)
\[
f \colon X \xrightarrow{\sim} \Spc(\Der_{\qc}(X)^c)
\]
given by
\[
f(x) = \SET{a \in \Der_{\qc}(X)^c}{a_x \simeq 0 \text{ in } \Der(\mathcal{O}_{X,x})^c} \quad \text{ for all } x \in X.
\]
Moreover, under this equivalence, the homological support\footnote{That is, the support of the total homology of $a$.} $\supph(a) \subseteq X$ of a perfect complex $a \in \Der_{\qc}(X)^c$ corresponds to the closed subset $\supp(a) \subseteq \Spc(\Der_{\qc}(X)^c)$.

For the translation to tt-geometry in this form, see \cite[Corollary 5.6 and Theorem 6.3]{Balmer05a} and \cite[Theorem 8.5]{BuanKrauseSolberg07}.
\end{Exa}

\begin{Rem}
When $X = \Spec(R)$ is affine, there is a simple description of the inverse to the above isomorphism, given by the following general construction and theorem of Balmer \cite[Theorem 5.3]{Balmer10b}.
\end{Rem}

\begin{Thm}[Balmer]\label{thm:comparison-map}
Let $\cat K \in \tring$ be an essentially small tt-category. Then there is a natural morphism of locally Dirac ringed spaces
\[
    \rho \colon \Spc(\cat K) \to \Spec^h(\pi_*\End_{\cat K}(\unit))
\]
whose underlying map is the continuous, inclusion-reversing assignment
\[
    \rho(\cat P) = \SET{f\in\pi_*\End_{\cat K}(\unit)}{\cone(f) \notin \cat P}.
\]
\end{Thm}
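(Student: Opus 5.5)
The plan is to construct $\rho$ first on underlying sets, then prove continuity, and finally build the map of structure sheaves and check locality on stalks. Write $R_* \coloneqq \pi_*\End_{\cat K}(\unit)$.

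\emph{Primality of $\rho(\cat P)$.} For $\cat P\in\Spc(\cat K)$ I would show that $\rho(\cat P)$ is a homogeneous prime ideal of $R_*$.
\begin{enumerate}
\item $\rho(\cat P)$ is closed under multiplication by arbitrary homogeneous elements. For homogeneous $f,g$, the octahedral axiom gives an exact triangle relating $\cone(f)$, $\cone(g)$ (suitably shifted) and $\cone(fg)$. Hence if $\cone(fg)\in\cat P$, then both $\cone(f)$ and $\cone(g)$ lie in $\cat P$, using that $\cone(f)$ is a direct summand of $\cone(fg)\otimes\cone(f)$-type objects, or more simply via $\supp(\cone(fg))=\supp(\cone f)\cup\supp(\cone g)$.
\item The same identity $\supp(\cone(fg))=\supp(\cone f)\cup\supp(\cone g)$ (Balmer's lemma for endomorphisms of the unit) gives primality: $fg\in\rho(\cat P)$ forces $f$ or $g$ to lie in $\rho(\cat P)$.
\item Additivity: $\supp(\cone(f+g))\subseteq\supp(\cone f)\cup\supp(\cone g)$ fails in general, so I would instead use that $\cone(f)\notin\cat P$ iff $f$ is not invertible in the local category $\cat K/\cat P$. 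A sum of non-invertibles in the local graded ring $\pi_*\End_{\cat K/\cat P}(\unit)$ is non-invertible, provided that ring is local. That locality is Balmer's result that the unit of a local tt-category has local graded endomorphism ring, which rests on nilpotence of maps $f$ with $\cone(f)\otimes$-nilpotent.
\item Units never lie in $\rho(\cat P)$, since an invertible $f$ has $\cone(f)=0\in\cat P$.
\end{enumerate}
It is immediate from the definition that $\rho$ reverses inclusions.

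\emph{Continuity.} I would compute
\[
\rho^{-1}(D(f))=\SET{\cat P}{\cone(f)\in\cat P}=U(\cone(f)),
\]
which is open in $\Spc(\cat K)$.

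\emph{Sheaf map.} It suffices to give compatible maps on basic opens, $R_*[1/f]\to\mathcal O_{\Spc(\cat K)}(U(\cone f))$. In $\cat K(U(\cone f))$ the object $\cone(f)$ becomes zero, so $f$ is inverted and $R_*\to\pi_*\End(\unit_U)$ factors through $R_*[1/f]$. Sheafifying and taking the adjoint then yields $\rho^\#\colon\mathcal O_{\Spec^h(R_*)}\to\rho_*\mathcal O_{\Spc(\cat K)}$.

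\emph{Locality on stalks.} The stalk $\mathcal O_{\Spc(\cat K),\cat P}$ is the colimit over $U\ni\cat P$, which is $\pi_*\End_{\cat K/\cat P}(\unit)$. The map from $(R_*)_{\rho(\cat P)}$ to it is local because an $f\notin\rho(\cat P)$ becomes invertible, while an $f\in\rho(\cat P)$ maps into the maximal ideal, as $\cone(f)\neq 0$ there. I expect the main obstacle to be step 3, namely establishing that $\pi_*\End_{\cat K/\cat P}(\unit)$ is local, which is exactly the ingredient needed there.
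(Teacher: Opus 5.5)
Your proposal is correct and is essentially Balmer's own argument from \cite[Theorem~5.3]{Balmer10b}, which the paper cites rather than reproves: $\rho(\cat P)$ is the preimage of the unique maximal homogeneous ideal of the local graded ring $\pi_*\End_{\cat K/\cat P}(\unit)$, continuity comes from $\rho^{-1}(D(f))=U(\cone(f))$, and the sheaf map is defined on basic opens $D(f)$ and checked to be local on stalks. One slip to fix: the octahedral triangle only yields $\supp(\cone(fg))\subseteq\supp(\cone(f))\cup\supp(\cone(g))$, which is the direction step~2 needs for primality, not the implication $\cone(fg)\in\cat P\Rightarrow\cone(f)\in\cat P$ that you draw from it in step~1 (that implication follows from graded-commutativity in $\cat K/\cat P$, where a factor of a unit is a unit); in any case step~3 makes steps~1--2 redundant, since the preimage of a homogeneous prime under the graded ring map $\pi_*\End_{\cat K}(\unit)\to\pi_*\End_{\cat K/\cat P}(\unit)$ is automatically a homogeneous prime.
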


\begin{Rem}
It is not always the case that $\Spc(\cat K)$ is a scheme; for instance, taking $\cat K = \SH^c$ gives a counterexample. See \cite[Proposition 9.7]{Balmer10b}.
\end{Rem}
\section{Completions of tt-categories}\label{sec:completion}
The tensor-triangular analog of $I$-adic completion is completion with respect to a closed subset of the Balmer spectrum. In this short section we recall the notion, and review the most basic example of the derived category of a commutative ring. Nothing in this section is new; the ideas date back to at least Greenlees \cite{Greenlees01}, and we follow the recent exposition in \cite{balmer2025tateintermediatevaluetheorem}.

\begin{Def}
Let $\cat C$ be a rigidly compactly generated stable homotopy theory, and let $Y \subseteq \Spc(\cat C^d)$ be a Thomason subset. We set
\[
  \cat C^d_Y \coloneqq \SET{x\in\cat C^d}{\supp(x)\subseteq Y}
  \quad \text{ and } \quad 
  \cat C_Y \coloneqq \Loc\langle \cat C^d_Y\rangle.
\]
\end{Def}

\begin{Rem}
The inclusion $\cat C_Y \hookrightarrow \cat C$ admits a right adjoint given by tensoring with the Balmer--Favi idempotent $e_Y$ \cite{BalmerFavi11}. Our primary interest, however, lies in the double right orthogonal to $\cat C_Y$. Recall that if $\cat E\subseteq \cat C$ is any class of objects, we write
\[
  \cat E^{\perp} \coloneqq \SET{c\in\cat C}{\Hom_{\cat C}(s,c)=0 \text{ for all } s\in\cat E}.
\]
Note that this is closed under suspension in $\cat C$.
\end{Rem}

\begin{Def}\label{def:completion}
    The \emph{completion} of $\cat C$ at $Y \subseteq \Spc(\cat C^d)$ is the full subcategory
    \[
    \widehat{\cat C}_Y \coloneqq (\cat C_Y)^{\perp}{}^{\perp}.
    \]
    Objects of $\widehat{\cat C}_Y $ are called \emph{$Y$-complete}.
\end{Def}

\begin{Rem}\label{rem:completion-formula}
 The inclusion $\widehat{\cat C}_Y \hookrightarrow \cat C$ has a symmetric monoidal left adjoint, given by $\widehat{(-)}_Y \coloneqq \ihom{e_Y,-}$. The category $\widehat{\cat C}_Y$ is a tensor-triangulated category under the localized tensor product. It is not, however, rigidly-compactly generated, except in trivial cases (see \cite[Remark~2.12]{balmer2024perfectcomplexescompletion}). Note that via the inclusion we can also consider $\widehat{(-)}_Y$ as an endofunctor on $\cat C$.
\end{Rem}

\begin{Rem}\label{rem:completion-as-bousfield}
    We are often interested in the case where $Y = \supp(A)$ for some $A \in \cat C^d$. In this case, the completion functor $\widehat{(-)}_{\supp(A)}$ coincides with Bousfield localization with respect to $A$ (see, for example, \cite[Proposition~2.34]{bhv1}).
\end{Rem}

\begin{Exa}\label{exa:derived-complete-ring}
The most basic example is as follows. Let $R$ be a noetherian ring, $I\subseteq R$ an ideal, and consider the closed subset $Y=V(I)\subseteq \Spec(R)\cong \Spc(\Der(R)^c)$ (see \Cref{exa:balmer-scheme}). Then the functor
\[
\widehat{(-)}_I \colon \Der(R)\ \longrightarrow\ 
\widehat{\Der(R)}_{I}\coloneqq \widehat{\Der(R)}_{V(I)}
\]
is the derived $I$-adic completion functor; see, for instance, \cite{GreenleesMay92,DwyerGreenlees02}. Note that even if $M$ is a discrete $R$-module, the derived completion $M^{\wedge}_I$ need not agree with the classical $I$-adic completion. For example, with $R=\mathbb{Z}$, $I=(p)$, and $M=\bigoplus_{n\ge1}\mathbb{Z}/p^n$, the two notions differ \cite[Example~10.1.16]{MayPonto2012More}.
\end{Exa}
\section{The formal spectrum of a tt-category}\label{sec:formal-spectrum}
We now turn to the definition of the formal spectrum in the tensor-triangular setting.

\begin{Def}\label{def:formal-balmer}
Let $\cat K \in \tring$ be a 2-ring and let $\cat J=\cat K_Y$ be the thick $\otimes$-ideal in $\cat K$ corresponding to a Thomason subset $Y\subseteq \Spc(\cat K)$ (note that every thick $\otimes$-ideal is of this form by \Cref{thm:balmer}). Write $\cat C$ for the stable homotopy theory associated to $\cat K$ (\Cref{Rem:ind-completion}). The \emph{formal spectrum} of $\cat K$ (with respect to $Y$), denoted $\Spf(\cat K,Y)$, is the Dirac ringed space
\[
(|\Spf(\cat K,Y)|, \mathcal{O}_{\Spf(\cat K,Y)})
\]
defined as follows:
\begin{enumerate}
    \item The underlying topological space is
    \[
    |\Spf(\cat K,Y)| \coloneqq Y \subseteq \Spc(\cat K)
    \]
    equipped with the subspace topology. A basis of open sets is given by
    \[
    \overline{U}(c) \coloneqq U(c)\cap Y = \SET{\cat P\in Y}{c\in \cat P}.
    \]

    \item Let ${}_{\mathrm p}\mathcal O_{\cat K,Y}$ denote the presheaf on this basis with values in $\mathrm{gr}\!\CAlg^{\heartsuit}$ defined by
    \[
      {}_{\mathrm p}\mathcal O_{\cat K,Y}\bigl(\overline U(c)\bigr)
      \coloneqq \pi_*\Hom_{\cat C}\Bigl(\unit, \widehat{(\unit[c^{-1}])}_Y\Bigr),
    \]
    and define $\mathcal O_{\Spf(\cat K,Y)}\coloneqq a({}_{\mathrm p}\mathcal O_{\cat K,Y})$. Here $\unit[c^{-1}]$ denotes the tensor unit of the localization $\cat C[c^{-1}] = \cat C(U(c))$, viewed as an object of $\cat C$ via the right adjoint to the localization functor, and $\widehat{(-)}_Y$ denotes $Y$-completion as introduced in \Cref{sec:completion}. We verify in \Cref{lem:presheaf-well-defined} that this assignment depends only on the basic open $\overline U(c)$, and hence defines a presheaf on the displayed basis of $Y$.
\end{enumerate}
\end{Def}
\begin{Rem}\label{rem:adjoint-sheaf}
    Recall that $\widehat{(-)}_Y \simeq \ihom{e_Y,-}$. Therefore, the defining presheaf can equivalently be written as
    \[
      {}_{\mathrm p}\mathcal O_{\cat K,Y}\bigl(\overline U(c)\bigr)
      \cong \pi_*\Hom_{\cat C}\bigl(e_Y,\unit[c^{-1}]\bigr).
    \]
\end{Rem}
\begin{Lem}\label{lem:completed-units-well-defined}
If $U_1\subseteq U_2$ are quasi-compact opens of $\Spc(\cat K)$ and $U_1\cap Y=U_2\cap Y$, then the canonical map
\[
  \widehat{(\unit_{U_2})}_Y\longrightarrow \widehat{(\unit_{U_1})}_Y
\]
is an equivalence in $\CAlg(\cat C)$.
\end{Lem}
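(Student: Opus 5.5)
Since $\widehat{(-)}_Y \simeq \ihom{e_Y,-}$ (\Cref{rem:completion-formula}), it suffices to show that $\ihom{e_Y,\cof(\unit_{U_2}\to\unit_{U_1})}=0$. Here $\unit_{U_i}$ denotes the unit of $\cat C(U_i)$, viewed in $\cat C$ via the fully faithful right adjoint, and the map $\unit_{U_2}\to\unit_{U_1}$ is the canonical one. The completion functor $\widehat{(-)}_Y$ is a symmetric monoidal localization, and this map is a map of commutative algebras. So once we know it becomes an equivalence in $\cat C$ after completion, the conclusion in $\CAlg(\cat C)$ follows automatically.

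First I would identify the localizations concretely. Let $Z_i = U_i^{\complement}$; each is a Thomason closed subset since $U_i$ is quasi-compact. Then $\unit_{U_i}\simeq f_{Z_i}$, where $e_{Z_i}\to\unit\to f_{Z_i}$ is the Balmer--Favi idempotent triangle for the smashing localization away from $\Loc\langle\cat K_{Z_i}\rangle$. Since $Z_2\subseteq Z_1$, the canonical map $f_{Z_2}\to f_{Z_1}$ fits into a cofiber sequence. More precisely, the octahedral axiom applied to $e_{Z_2}\to e_{Z_1}\to\unit$ shows that the fiber of $f_{Z_2}\to f_{Z_1}$ is $e_{Z_1}\otimes f_{Z_2}$, the idempotent attached to the locally closed set $Z_1\cap U_2 = U_2\setminus U_1$.

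Second, I would use the hypothesis $U_1\cap Y=U_2\cap Y$. It says exactly that $(U_2\setminus U_1)\cap Y=\varnothing$, i.e. $Z_1\cap U_2\cap Y=\varnothing$. We need $\Hom_{\cat C}(e_Y, e_{Z_1}\otimes f_{Z_2}\otimes w)=0$ for all $w$, since $\ihom{e_Y,-}$ vanishes iff these vanish. Using that $e_Y\otimes e_{Z_1}\simeq e_{Y\cap Z_1}$ and that these are smashing, this reduces to showing $e_{Y\cap Z_1}\otimes f_{Z_2}=0$. Equivalently, $\Loc\langle\cat K_{Y\cap Z_1}\rangle\subseteq \Loc\langle\cat K_{Z_2}\rangle$, and it suffices to check this on compact generators. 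Here I need the standard fact that $Y\cap Z_1$ is Thomason, being a union of closed sets $\supp(a)\cap Z_1$ with quasi-compact complements. Take a compact $x$ with $\supp(x)\subseteq Y\cap Z_1$. Then $\supp(x)\cap U_2\subseteq Y\cap Z_1\cap U_2=\varnothing$, so $\supp(x)\subseteq Z_2$ and $x\in\cat K_{Z_2}$, whence $x\otimes f_{Z_2}=0$. The conclusion is that $\ihom{e_Y,\mathrm{fib}}\simeq\ihom{e_Y\otimes e_{Z_1}, e_{Z_1}\otimes f_{Z_2}\otimes-}$, which vanishes because $e_{Y}\otimes e_{Z_1}\otimes f_{Z_2}\simeq 0$.

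The main obstacle is the bookkeeping with Balmer--Favi idempotents: establishing $e_Y\otimes e_{Z}\simeq e_{Y\cap Z}$ and the fiber identification. Both are standard in \cite{BalmerFavi11}, and I would cite them rather than reprove them. An alternative route avoids the intersection formula. One notes that $\ihom{e_Y, e_{Z_1}\otimes f_{Z_2}\otimes w}$ is adjoint to $\Hom(e_Y\otimes g, w')$, and that $e_Y\otimes f_{Z_2}\otimes e_{Z_1}$ lies in a localizing ideal generated by compacts supported in $Y\cap Z_1\cap U_2$ after localization, which is empty.
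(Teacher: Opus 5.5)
Your proposal is correct and follows essentially the same route as the paper: you identify the fiber of $f_{Z_2}\to f_{Z_1}$ as $e_{Z_1}\otimes f_{Z_2}$, use $Y\cap Z_1\subseteq Z_2$ to get $e_Y\otimes e_{Z_1}\otimes f_{Z_2}\simeq e_{Y\cap Z_1}\otimes f_{Z_2}\simeq 0$, conclude that completion kills the fiber, and pass to $\CAlg(\cat C)$ via (lax) monoidality. The only difference is that you check the vanishing on compact generators where the paper cites \cite[Lemma~1.27]{BarthelHeardSers2023Stratification}. Your final displayed identification $\ihom{e_Y,\mathrm{fib}}\simeq\ihom{e_Y\otimes e_{Z_1}, e_{Z_1}\otimes f_{Z_2}\otimes-}$ is garbled and is not valid in general: for $Y=\Spc(\cat K)$ it would equate $e_{Z_1}\otimes W$ with the $Z_1$-completion of $W$. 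Replace it with the natural equivalence $\ihom{e_Y,e_Y\otimes W}\simeq\ihom{e_Y,W}$, which the paper uses and which directly shows that $e_Y\otimes W\simeq 0$ forces $\ihom{e_Y,W}\simeq 0$.
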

\begin{proof}
    Write $Z_i=\Spc(\cat K)\setminus U_i$. Then $Z_2\subseteq Z_1$ and $\unit_{U_i}\simeq f_{Z_i}$. The fiber of the canonical map $f_{Z_2}\to f_{Z_1}$ is $e_{Z_1}\otimes f_{Z_2}$. Since $U_1\cap Y=U_2\cap Y$, we have $Y\cap Z_1\subseteq Z_2$, and hence
\[
e_Y\otimes e_{Z_1}\otimes f_{Z_2}
\simeq e_{Y\cap Z_1}\otimes f_{Z_2}
\simeq 0
\]
by \cite[Lemma 1.27]{BarthelHeardSers2023Stratification}. Applying $\ihom{e_Y,-}$ to the fiber sequence and using the adjunction equivalence
\[
  \ihom{e_Y,e_Y\otimes W}\xrightarrow{\sim} \ihom{e_Y,W}
\]
shows that $\widehat{f_{Z_2}}_Y\to\widehat{f_{Z_1}}_Y$ is an equivalence. Finally, this is an equivalence in $\CAlg(\cat C)$ because $\widehat{(-)}_Y=iL$ is lax symmetric monoidal: $L$ is symmetric monoidal and its fully faithful right adjoint $i$ is lax symmetric monoidal.
\end{proof}
\begin{Lem}\label{lem:presheaf-well-defined}
Let $\cat B$ be the basis $\{U(c)\}_{c\in\cat K}$ of $\Spc(\cat K)$ and let $\overline{\cat B}$ be the basis $\{\overline U(c)\}_{c\in\cat K}$ of $Y$.
\begin{enumerate}[label=\textup{(\alph*)}]
\item The assignment $U(c)\mapsto\pi_*\Hom_{\cat C}(e_Y,\unit[c^{-1}])$ defines a presheaf $\cat B^{\op}\to\mathrm{gr}\!\CAlg^{\heartsuit}$.
\item This presheaf descends along $U\mapsto U\cap Y$ to the presheaf ${}_{\mathrm p}\mathcal O_{\cat K,Y}$ on $\overline{\cat B}$. In particular, the presheaf in \Cref{def:formal-balmer} is well-defined.
\end{enumerate}
\end{Lem}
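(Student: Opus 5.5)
For (a), I will first make the assignment $U\mapsto \widehat{(\unit_U)}_Y$ functorial on the poset $\cat B$ of basic opens, and then apply $\pi_*\Hom_{\cat C}(\unit,-)$. For a basic open $U=U(c)$ with complement $Z=\supp(c)$, we have $\unit[c^{-1}]=\unit_U\simeq f_Z$. An inclusion $U(c_1)\subseteq U(c_2)$ gives $Z_2\subseteq Z_1$, and the Balmer--Favi formalism provides a canonical map $f_{Z_2}\to f_{Z_1}$. Equivalently, this is the unit of the localization $\cat C(U_2)\to\cat C(U_1)$, applied to $\unit_{U_2}$.

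To make these maps coherent, I would use \Cref{remark:sheaf-of-categories} and \Cref{remark:structure-sheaf-of-ring-spectra}. These exhibit $U\mapsto \unit_U$ as a functor $\cat B^{\op}\to\CAlg(\cat C)$; concretely, $\unit_U$ is the idempotent algebra $f_Z$, and restrictions are the unique algebra maps between idempotent algebras. Composing with the lax symmetric monoidal functor $\widehat{(-)}_Y=iL$ gives a functor $\cat B^{\op}\to\CAlg(\cat C)$. Composing further with $\pi_*\Hom_{\cat C}(\unit,-)$ gives a functor to Dirac rings. Here $\Hom_{\cat C}(\unit,-)$ is lax monoidal, and $\pi_*$ of an $E_\infty$ ring is graded-commutative. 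By \Cref{rem:adjoint-sheaf} and the adjunction $\Hom(\unit,\ihom{e_Y,x})\simeq\Hom(e_Y,x)$, this presheaf is the one stated in (a). Functoriality at the level of homotopy categories is already enough for a presheaf of Dirac rings, so only uniqueness of the maps up to homotopy is really needed.

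For (b), I will show that the presheaf $F$ from (a) factors through the surjective, order-preserving map $\cat B\to\overline{\cat B}$, $U\mapsto U\cap Y$. First, if $U_1\subseteq U_2$ and $U_1\cap Y=U_2\cap Y$, then $F(U_2)\to F(U_1)$ is an isomorphism by \Cref{lem:completed-units-well-defined}. Now suppose $U(c_1)\cap Y=U(c_2)\cap Y$ without any inclusion between $U(c_1)$ and $U(c_2)$. I will pass through the common refinement $U(c_1)\cap U(c_2)=U(c_1\oplus c_2)$, which is again basic and has the same trace on $Y$. This gives an isomorphism $F(U(c_1))\cong F(U(c_1\oplus c_2))\cong F(U(c_2))$.

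It then remains to check that these identifications are compatible with restrictions. Suppose $\overline U(c)\subseteq\overline U(c')$ but $U(c)\not\subseteq U(c')$. I would define the restriction as the composite
\[F(U(c'))\to F(U(c)\cap U(c'))\xleftarrow{\ \cong\ }F(U(c)).\]
This composite is well defined because $U(c)\cap U(c')=U(c\oplus c')$ has trace $\overline U(c)$ on $Y$. Independence of the choice of representatives follows because every map involved is a restriction in the single presheaf $F$, and any two zigzags can be compared inside the intersection of all the opens involved. In particular, the value ${}_{\mathrm p}\mathcal O_{\cat K,Y}(\overline U(c))$ is independent of the choice of $c$ up to canonical isomorphism.

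The main obstacle is the coherence in (a): making the choice of the maps $f_{Z_2}\to f_{Z_1}$ functorial rather than merely defined up to homotopy. My first attempt will be the Balmer--Favi rigidity argument. One has $\Hom(f_{Z_2},f_{Z_1})\simeq\Hom(\unit,f_{Z_1})$, since $f_{Z_1}$ is $f_{Z_2}$-local. Among these, the map is unique subject to being compatible with the units. This uniqueness gives strict functoriality on homotopy categories, which suffices here.
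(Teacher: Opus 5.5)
Your proposal is correct and takes essentially the same route as the paper. In (a), the paper likewise identifies $\unit[c^{-1}]\simeq f_{\supp(c)}$ and takes the canonical maps $f_{\supp(c_2)}\to f_{\supp(c_1)}$ as the restrictions; your justification of coherence, via uniqueness of unital maps between idempotent algebras, is a more careful version of the paper's appeal to ``functoriality of these canonical maps.'' In (b), the paper uses \Cref{lem:completed-units-well-defined} together with the same zigzag through $U(c_1\oplus c_2)=U(c_1)\cap U(c_2)$ to descend along $U\mapsto U\cap Y$.
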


\begin{proof}
For (a), the open $U(c)$ determines its complement $\supp(c)$, and $\unit[c^{-1}]\simeq f_{\supp(c)}$ is the corresponding Balmer--Favi idempotent, so it is independent of the choice of $c$ presenting $U(c)$. If $U(c_1)\subseteq U(c_2)$, then $\supp(c_2)\subseteq\supp(c_1)$ and the canonical map $f_{\supp(c_2)}\to f_{\supp(c_1)}$ induces the restriction map. Functoriality of these canonical maps gives identities and composition.

For (b), \Cref{lem:completed-units-well-defined} shows that if $U(c_1)\subseteq U(c_2)$ and $\overline U(c_1)=\overline U(c_2)$, then restriction induces an isomorphism
\[
  \pi_*\Hom_{\cat C}(e_Y,\unit[c_2^{-1}])
  \xrightarrow{\cong}
  \pi_*\Hom_{\cat C}(e_Y,\unit[c_1^{-1}]).
\]
For arbitrary representatives of the same basic open, compare both with $U(c_1\oplus c_2)=U(c_1)\cap U(c_2)$. More generally, if $\overline U(c_1)\subseteq\overline U(c_2)$, define restriction by the canonical zigzag
\[
  \pi_*\Hom_{\cat C}(e_Y,\unit[c_2^{-1}])
  \longrightarrow
  \pi_*\Hom_{\cat C}(e_Y,\unit[(c_1\oplus c_2)^{-1}])
  \xleftarrow{\ \cong\ }
  \pi_*\Hom_{\cat C}(e_Y,\unit[c_1^{-1}]).
\]
The isomorphism just proved shows that this is independent of the representatives, and compatibility with composition follows from the restriction maps after passing to common intersections. Thus the assignment descends to the poset $\overline{\cat B}$.
\end{proof}
The previous result shows that $\Spf(\cat K,Y)$ is a well defined Dirac ringed space. What we have not yet shown is that the stalks are Dirac local. Therefore we ask:
\begin{Que}
    Is $\Spf(\cat K,Y)$ always a \emph{locally} Dirac ringed space?
\end{Que}

   \begin{Rem}\label{rem:completed-sheaf}
When $\Spc(\cat K)$ is noetherian, \Cref{sec:categorical-sheaf} constructs a structure sheaf valued in $\CAlg(\Pr\nolimits^L_{\mathrm{st}})$ on $Y$. Its value on $U\cap Y$ is naturally equivalent to the category $\widehat{\cat C(U)}_{Y\cap U}$ of complete objects in $\cat C(U)$; see \Cref{thm:categorical-formal-sheaf}. Taking endomorphisms of the tensor units, applying $\pi_*$ sectionwise, and sheafifying recovers $\mathcal O_{\Spf(\cat K,Y)}$ by \Cref{cor:categorical-formal-structure-sheaf}. We expect that the noetherian assumption can be removed, but do not pursue this here.
\end{Rem}

\begin{Exa}\label{exa:whole-spectrum}
Taking $\cat J=\thick_{\otimes}\langle \unit\rangle$ (equivalently $Y=\Spc(\cat K)$), the construction reduces to the usual Balmer spectrum:
\[
\Spf(\cat K,\Spc(\cat K)) \cong \Spc(\cat K)
\]
as Dirac ringed spaces (compare with \eqref{eq:complicated-balmer-sheaf}).
\end{Exa}

\begin{Rem}
There is a subtle difference from the algebraic situation: for rings, one typically forms formal spectra only along \emph{closed} subsets of $\Spec(R)$, whereas here we allow arbitrary Thomason subsets $Y\subseteq \Spc(\cat K)$. These two classes of subsets are not comparable in general: a Thomason subset need not be closed, and a closed subset need not be Thomason. When $\Spc(\cat K)$ is noetherian, however, every closed subset is Thomason, and the Thomason subsets are precisely the specialization-closed subsets.
\end{Rem}

The formal spectrum is functorial in the obvious way:

\begin{Lem}\label{lem:functoriality}
Let $\cat K,\cat L$ be 2-rings and let $Y\subseteq \Spc(\cat K)$ and $Y'\subseteq \Spc(\cat L)$ be Thomason subsets. Suppose $F\colon \cat K\to \cat L$ is an exact symmetric monoidal functor and write $\varphi\coloneqq \Spc(F)\colon \Spc(\cat L)\to \Spc(\cat K)$ for the induced map. If $\varphi(Y')\subseteq Y$, then there is a morphism of Dirac ringed spaces
\[
  (\varphi,\varphi^\#)\colon \Spf(\cat L,Y')\longrightarrow \Spf(\cat K,Y).
\]
\end{Lem}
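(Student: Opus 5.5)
The plan is to construct the morphism in three steps: first the continuous map, then a map of presheaves on the bases of basic opens, and finally pass to sheafifications.

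\textbf{Continuous map.} The map $\varphi$ is continuous on the whole Balmer spectra, and $\varphi(Y')\subseteq Y$. So it restricts to a continuous map $\varphi\colon Y'\to Y$ for the subspace topologies. On basic opens we have
\[
\varphi^{-1}(\overline U(c))=\varphi^{-1}(U(c))\cap Y'=U(F(c))\cap Y'=\overline U'(F(c)),
\]
by \Cref{rem:functoriality}. Thus basic opens of $Y$ pull back to basic opens of $Y'$.

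\textbf{Presheaf map.} It suffices to build a natural map ${}_{\mathrm p}\mathcal O_{\cat K,Y}(\overline U(c))\to{}_{\mathrm p}\mathcal O_{\cat L,Y'}(\overline U'(F(c)))$, compatible with restrictions. Extending $F$ to $\Ind$ gives a symmetric monoidal colimit-preserving functor $F\colon\cat C\to\cat D\coloneqq\Ind(\cat L)$, with lax monoidal right adjoint $G$. Since $F$ is symmetric monoidal, it sends $\cat K_Z$ into $\cat L_{\varphi^{-1}Z}$, so it sends Balmer--Favi idempotents to Balmer--Favi idempotents: $F(e_Z)\simeq e_{\varphi^{-1}Z}$ and $F(f_Z)\simeq f_{\varphi^{-1}Z}$. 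In particular $F(\unit[c^{-1}])\simeq\unit[F(c)^{-1}]$, and $F(e_Y)\simeq e_{\varphi^{-1}Y}$.

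Using \Cref{rem:adjoint-sheaf}, I would define the map as the composite
\[
\Hom_{\cat C}(e_Y,f_{\supp c})\xrightarrow{F}\Hom_{\cat D}(e_{\varphi^{-1}Y},f_{\supp F(c)})\longrightarrow \Hom_{\cat D}(e_{Y'},f_{\supp F(c)}).
\]
The second arrow is precomposition with the canonical map $e_{Y'}\to e_{\varphi^{-1}Y}$, which exists because $Y'\subseteq\varphi^{-1}(Y)$ and the idempotent triangles are functorial in inclusions of Thomason subsets.

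\textbf{Multiplicativity (main obstacle).} The delicate point is that this composite must be a map of Dirac rings, not just of graded groups. I would argue instead at the level of $\CAlg(\cat C)$. The object $\widehat{(\unit[c^{-1}])}_Y$ is a commutative algebra, since completion is lax monoidal (as in \Cref{lem:completed-units-well-defined}). Applying $F$ and then $Y'$-completion gives a commutative algebra map
\[
F\widehat{(\unit[c^{-1}])}_Y\to \widehat{(F\widehat{(\unit[c^{-1}])}_Y)}_{Y'}.
\]
The target is naturally equivalent to $\widehat{(\unit[F(c)^{-1}])}_{Y'}$: the map $\unit[c^{-1}]\to\widehat{(\unit[c^{-1}])}_Y$ has cofiber that is $Y$-local, i.e.\ killed by $e_Y\otimes-$; after applying $F$ it is killed by $e_{\varphi^{-1}Y}$, hence by $e_{Y'}$, so $Y'$-completion inverts it. Then apply $\pi_*\Hom_{\cat D}(\unit,-)\circ F$, using $\Hom_{\cat C}(\unit,A)\to\Hom_{\cat D}(\unit,FA)$, which is a ring map for algebras $A$. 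This yields the desired ring map, and it agrees with the composite above.

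\textbf{Naturality and sheafification.} Naturality in $c$ follows from functoriality of all the canonical maps. Compatibility with the zigzag description of restrictions from \Cref{lem:presheaf-well-defined} is then formal. Finally, a morphism of presheaves on the bases, ${}_{\mathrm p}\mathcal O_{\cat K,Y}\to\varphi_*\,{}_{\mathrm p}\mathcal O_{\cat L,Y'}$, which is defined using the preimage formula for basic opens, sheafifies to $\varphi^\#\colon\mathcal O_{\Spf(\cat K,Y)}\to\varphi_*\mathcal O_{\Spf(\cat L,Y')}$.
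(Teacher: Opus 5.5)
Your proposal is correct and follows the paper's proof. The paper defines the sheaf map by exactly your composite $\Hom_{\cat C}(e_Y,\unit[c^{-1}])\to\Hom_{\cat D}(e_{\varphi^{-1}(Y)},\unit[F(c)^{-1}])\to\Hom_{\cat D}(e_{Y'},\unit[F(c)^{-1}])$, citing Balmer--Sanders for the behaviour of idempotents under $F$, and then sheafifies; your extra argument that this composite is multiplicative (via the algebra map $F\widehat{(\unit[c^{-1}])}_Y\to\widehat{(\unit[F(c)^{-1}])}_{Y'}$) is sound and makes explicit a point the paper leaves implicit.
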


\begin{proof}
On underlying spaces this is the restriction $\varphi|_{Y'}\colon Y'\to Y$. For the sheaf map, we set $\cat C\coloneqq \Ind(\cat K)$ and $\cat D\coloneqq \Ind(\cat L)$; the functor $F$ extends to a colimit-preserving symmetric monoidal functor $F\colon \cat C\to \cat D$.

Let $\overline U(c)\coloneqq U(c)\cap Y$ be a basic open of $\Spf(\cat K,Y)$, with $c\in \cat K$ compact. Using \Cref{rem:adjoint-sheaf} we have
\[
  {}_{\mathrm p}\mathcal O_{\cat K,Y}(\overline U(c))
  \cong \pi_*\Hom_{\cat C}(e_Y,\unit [c^{-1}]).
\]
Define on the basis the composite
\[
\begin{split}
\pi_*\Hom_{\cat C}(e_Y,\unit [c^{-1}])
&\longrightarrow
\pi_*\Hom_{\cat D}(F(e_Y),F(\unit [c^{-1}]))
\\
&\xlongrightarrow{\sim}\pi_*\Hom_{\cat D}(e_{\varphi^{-1}(Y)},\unit[F(c)^{-1}])
\\
&\xlongrightarrow{\mathrm{res}}\pi_*\Hom_{\cat D}(e_{Y'},\unit[F(c)^{-1}]),
\end{split}
\]
where $F(e_Y)\simeq e_{\varphi^{-1}(Y)}$ and $F(\unit[c^{-1}])\simeq \unit[F(c)^{-1}]$ by the functoriality of idempotents (see \cite[Proposition~5.11]{BalmerSanders17}), and the last map is induced by the morphism of idempotents $e_{Y'}\to e_{\varphi^{-1}(Y)}$ coming from $Y'\subseteq \varphi^{-1}(Y)$. Note that the target is precisely ${}_{\mathrm p}\mathcal O_{\cat L,Y'}(\overline U(F(c)))$, where $\overline U(F(c)) \coloneqq U(F(c)) \cap Y'$ and $\overline U(F(c))=\varphi^{-1}(\overline U(c))$ as open sets of $Y'$. The constructions are all natural in $c$ and compatible with restriction maps, and hence define a morphism of presheaves. Sheafifying gives a morphism
\[
  \varphi^\#\colon \mathcal O_{\Spf(\cat K,Y)}\longrightarrow \varphi_*\mathcal O_{\Spf(\cat L,Y')}.
\]
Therefore, $(\varphi,\varphi^\#)$ is a morphism of Dirac ringed spaces $\Spf(\cat L,Y')\to \Spf(\cat K,Y)$.
\end{proof}

\begin{Exa}
As a special case of \Cref{lem:functoriality}, take $F = \id$ and $Y = \Spc(\cat K)$. Together with \Cref{exa:whole-spectrum}, this shows that for any Thomason subset $Y' \subseteq \Spc(\cat K)$, there is a canonical morphism of Dirac ringed spaces
\[
  \Spf(\cat K,Y') \longrightarrow \Spc(\cat K),
\]
which realizes the inclusion $Y' \hookrightarrow \Spc(\cat K)$ on underlying spaces. Here the morphism of sheaves is induced by the map $e_{Y'} \to \unit$.
\end{Exa}
\begin{Not}\label{not:completion-theorem}
Let $\cat K$ be a $2$-ring, let $\cat C=\Ind(\cat K)$, and fix a Thomason subset $Y\subseteq\Spc(\cat K)$. Set
\[
  L\coloneqq\widehat{(-)}_Y\colon\cat C\longrightarrow\widehat{\cat C}_Y,\qquad
  \cat L \coloneqq \widehat{\cat C}^{\dual}_Y,\qquad
  \cat D \coloneqq \Ind(\cat L).
\]
Note that completion induces a symmetric monoidal functor $L|_{\cat K}\colon\cat K\to\cat L$. Set
\[
  \varphi \coloneqq \Spc(L|_{\cat K}),\qquad Y_1 \coloneqq \varphi^{-1}(Y),
\]
and let $\Lambda=\Ind(L|_{\cat K})\colon\cat C\to\cat D$ and $\Psi\colon\cat D\to\widehat{\cat C}_Y$ be the cocontinuous symmetric monoidal functor extending the inclusion $\cat L\hookrightarrow\widehat{\cat C}_Y$. The cocontinuous functors $\Psi\Lambda$ and $L$ agree on $\cat K$, so the universal property of $\Ind(\cat K)$ gives a natural equivalence $\Psi\Lambda\simeq L$. By \Cref{lem:functoriality}, completion also induces
\begin{equation}\label{eq:completion-dual}
  (\varphi,\varphi^\#)\colon
  \Spf(\cat L,Y_1)\longrightarrow\Spf(\cat K,Y).
\end{equation}
With this setup, our ultimate goal is to show that \eqref{eq:completion-dual} is an isomorphism. But first, we need some preliminary results.
\end{Not}
\begin{Thm}[Balmer--Sanders]\label{lem:completion-spectrum}
In the notation of \eqref{not:completion-theorem}, the restriction $\varphi|_{Y_1}\colon Y_1\xrightarrow{\sim}Y$ is a homeomorphism, and
\[
  \cat L_{Y_1}\simeq (\widehat{\cat C}_Y)^c \simeq \cat K_Y.
\]
\end{Thm}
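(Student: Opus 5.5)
The plan is to prove the equivalences of categories first and then deduce the homeomorphism from them.

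\emph{Step 1: the equivalence $(\widehat{\cat C}_Y)^c \simeq \cat K_Y$.} This is the Greenlees--May/Dwyer--Greenlees style equivalence. The restriction of completion to torsion objects, $L|_{\cat C_Y}\colon \cat C_Y\to\widehat{\cat C}_Y$, is an equivalence. Its inverse is $e_Y\otimes-$. Indeed, $\ihom{e_Y,e_Y\otimes x}\simeq\ihom{e_Y,x}$, and $e_Y\otimes\ihom{e_Y,x}\simeq e_Y\otimes x$, since the cofiber of $\ihom{e_Y,x}\to x$ lies in $\cat C_Y^\perp$ and is killed by $e_Y\otimes-$. The inverse $e_Y\otimes-$ preserves colimits. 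Hence $L|_{\cat C_Y}$, being an equivalence, preserves and reflects compact objects. It follows that $(\widehat{\cat C}_Y)^c\simeq(\cat C_Y)^c=\cat K_Y$. Here the last identification uses Neeman's theorem: the compacts of $\Loc\langle\cat K_Y\rangle$ are the thick closure of $\cat K_Y$. We also need that the compacts of $\widehat{\cat C}_Y$ are exactly the images of the compacts of $\cat C_Y$, and this needs care because colimits in $\widehat{\cat C}_Y$ are the completions of colimits in $\cat C$. Since $\widehat{\cat C}_Y\simeq\cat C_Y$ as categories, both colimit notions match, so this causes no problem. Each $L(k)$ with $k\in\cat K_Y$ is dualizable in $\widehat{\cat C}_Y$, as the image of a dualizable object under a symmetric monoidal functor. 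So $(\widehat{\cat C}_Y)^c\subseteq\cat L$.

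\emph{Step 2: the equivalence $\cat L_{Y_1}\simeq(\widehat{\cat C}_Y)^c$.} By \Cref{rem:functoriality}, $\supp_{\cat L}(L(k))=\varphi^{-1}(\supp k)\subseteq Y_1$ for $k\in\cat K_Y$. Hence $\cat L_{Y_1}=\thick_\otimes\langle L(\cat K_Y)\rangle$, since both sides correspond to the Thomason subset $Y_1=\bigcup_{k\in\cat K_Y}\varphi^{-1}\supp(k)$ under \Cref{thm:balmer}. Next we show that this ideal is already $L(\cat K_Y)$. For $a\in\cat L$ and $k\in\cat K_Y$, the object $a\otimes L(k)$ is compact in $\widehat{\cat C}_Y$: indeed $\Hom(a\otimes Lk,-)=\Hom(Lk,Da\otimes-)$, and $Da\otimes-$ is cocontinuous. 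By Step 1, $a\otimes L(k)$ therefore lies in $L(\cat K_Y)$. The image $L(\cat K_Y)$ is thick because $L|_{\cat C_Y}$ is an equivalence. This gives $\cat L_{Y_1}=(\widehat{\cat C}_Y)^c\simeq\cat K_Y$.

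\emph{Step 3: the homeomorphism $Y_1\cong Y$.} I expect this to be the main obstacle, since $\varphi$ need not be injective globally. The approach is to identify $Y$ with the spectrum of the (non-unital) ideal $\cat K_Y$. Primes $\cat P\in Y$ correspond bijectively to the "prime" ideals $\cat P\cap\cat K_Y$ of $\cat K_Y$. This correspondence is the standard fact that $\supp$ restricted to $\cat K_Y$ detects $Y$, in the sense of Balmer's work on Thomason subsets / spectra of tensor ideals. The same statement applied to $\cat L$ identifies $Y_1$ with primes of $\cat L_{Y_1}$. Under the equivalence $\cat K_Y\simeq\cat L_{Y_1}$ induced by $L$, which is compatible with $\varphi(\cat Q)=L^{-1}\cat Q$, these correspondences match. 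Hence $\varphi|_{Y_1}$ is a bijection onto $Y$. It is a homeomorphism because the basic closed sets $\supp(k)$ for $k\in\cat K_Y$ (equivalently $\supp L(k)$) generate both subspace topologies, and $\varphi^{-1}(\supp k)=\supp L(k)$. Concretely, for injectivity and surjectivity I would recover $\cat Q\in Y_1$ from $\cat Q\cap\cat L_{Y_1}$, via $\cat Q=\{a : a\otimes\cat L_{Y_1}\not\subseteq\ldots\}$. More precisely, I would use that for $\cat Q\in Y_1$ one has $a\in\cat Q$ if and only if $a\otimes b\in\cat Q$ for some $b\in\cat L_{Y_1}\setminus\cat Q$. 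The condition $\cat Q\in Y_1$ ensures such a $b$ exists, and primality of $\cat Q$ gives the equivalence.
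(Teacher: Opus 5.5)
Your Steps 1 and 2 are correct and give a self-contained proof of $\cat L_{Y_1}=(\widehat{\cat C}_Y)^c\simeq\cat K_Y$. The paper instead quotes Balmer--Sanders' Theorem~4.6 on homotopy categories and upgrades it to $\infty$-categories. The bijectivity of $\varphi|_{Y_1}$ in Step 3 is also fine, once you add the converse construction for surjectivity: a prime $\cat I$ of $\cat K_Y$ is the trace of the prime $\{a\in\cat K : a\otimes\cat K_Y\subseteq\cat I\}\in Y$.

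\textbf{The gap is the final topological sentence of Step 3.} For $b\in\cat L$ you only know
\[
\supp(b)\cap Y_1=\bigcup_{k\in\cat K_Y}\supp(b\otimes Lk),\qquad b\otimes Lk\in L(\cat K_Y).
\]
This is an arbitrary union of sets $\supp(Lk')$, so it need not be closed in the topology those sets generate. The same problem arises for $\supp(a)\cap Y$ with $a\in\cat K$. So the supports of objects of $\cat K_Y$ (respectively $L(\cat K_Y)$) do not in general form closed bases of the subspace topologies.

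\textbf{The conclusion itself fails for non-closed $Y$.} Take $\cat K=\Der(\bbZ)^c$ and let $Y$ be the set of all closed points $(p)$; this is Thomason but not closed.
\begin{enumerate}
\item We have $e_Y\simeq\bigoplus_p\Gamma_p\bbZ$, so the unit of $\widehat{\cat C}_Y$ is $\prod_p\bbZ_p$.
\item For every set $S$ of primes, $b_S\coloneqq\prod_{p\in S}\bbZ_p$ is a retract of the unit, hence lies in $\cat L$.
\item In $\widehat{\cat C}_Y$ one has $b_S\otimes L(\bbZ/p)\simeq L(\bbZ/p)$ for $p\in S$ and $\simeq0$ for $p\notin S$. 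Also $b_S\otimes b_{S'}\simeq0$ for disjoint $S,S'$, since they are cut out by orthogonal idempotents of the unit.
\item Let $\cat Q_p\in Y_1$ lie over $(p)$. Then $L(\bbZ/p)\notin\cat Q_p$, and primality gives $b_S\in\cat Q_p$ if and only if $p\notin S$.
\end{enumerate}
Hence $U(b_S)\cap Y_1=\{\cat Q_p : p\notin S\}$ for every $S$, so $Y_1$ is discrete. But $Y$ carries the cofinite topology. Thus $\varphi|_{Y_1}$ is a continuous bijection but not a homeomorphism, and no argument can close the gap without an extra hypothesis.

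\textbf{What rescues your argument is closedness of $Y$.} Suppose $Y=\supp(x_0)$ with $x_0\in\cat K$, as in every application in the paper.
\begin{itemize}
\item On the $\cat K$ side, $\supp(a)\cap Y=\supp(a\otimes x_0)$ with $a\otimes x_0\in\cat K_Y$.
\item On the $\cat L$ side, $Y_1=\supp(Lx_0)$ and $\supp(b)\cap Y_1=\supp(b\otimes Lx_0)$, where $b\otimes Lx_0\simeq Lk'$ for some $k'\in\cat K_Y$ by your Step 2.
\end{itemize}
Then $\{\supp(k)\}_{k\in\cat K_Y}$ and $\{\supp(Lk)\}_{k\in\cat K_Y}$ really are closed bases of $Y$ and $Y_1$. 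Since $\varphi^{-1}(\supp k)=\supp(Lk)$, the bijection $\varphi|_{Y_1}$ matches these bases and is a homeomorphism. For a general Thomason $Y$, only the bijection and the categorical equivalences survive.
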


\begin{proof}
On homotopy categories, this is \cite[Theorem~4.6]{balmer2025tateintermediatevaluetheorem}. The equivalence of homotopy categories is induced by an underlying exact functor between the corresponding stable $\infty$-categories. Since an exact functor of stable $\infty$-categories that induces an equivalence on homotopy categories is itself an equivalence, the result upgrades to the asserted equivalence of stable $\infty$-categories.
\end{proof}
\begin{Lem}\label{lem:completion-idempotent}
For every $M\in\cat D$, the functor $\Psi$ sends the $Y_1$-completion map $M\to\widehat M_{Y_1}$ to an equivalence. It therefore factors through $Y_1$-completion and induces an equivalence
\[
  \overline\Psi\colon\widehat{\cat D}_{Y_1}\xrightarrow{\ \sim\ }\widehat{\cat C}_Y.
\]
Consequently, if $\Psi^R$ is the right adjoint of $\Psi$, then the adjunction unit identifies $\Psi^R\Psi$ with $\widehat{(-)}_{Y_1}$.
\end{Lem}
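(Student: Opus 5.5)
We first show that $\Psi$ kills the fiber of $M\to\widehat M_{Y_1}$, and then deduce the equivalence $\overline\Psi$ by comparing compact generators on both sides.

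\textbf{Step 1: $\Psi$ inverts $Y_1$-completion.} The fiber of $M\to\widehat M_{Y_1}$ is $M\otimes f_{Y_1}$, since $\widehat{M}_{Y_1}\simeq\ihom{e_{Y_1},M}$ and the completion map has fiber lying in $(\cat D_{Y_1})^{\perp}$. Dually, I would rather use the characterization that the fiber $F$ satisfies $\Hom_{\cat D}(x,F)=0$ for all $x\in\cat L_{Y_1}$. Since $\Psi$ is symmetric monoidal and cocontinuous, I would proceed as follows. First compute $\Psi(e_{Y_1})$: by \Cref{lem:completion-spectrum}, $\cat L_{Y_1}\simeq\cat K_Y$ via $L$, so $\cat D_{Y_1}=\Loc\langle L(\cat K_Y)\rangle$. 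Hence $e_{Y_1}\simeq\Lambda(e_Y)$, using that $\Lambda(e_Y)$ is the idempotent for $\varphi^{-1}(Y)=Y_1$ (\cite[Proposition~5.11]{BalmerSanders17}). Then $\Psi(e_{Y_1})\simeq\Psi\Lambda(e_Y)\simeq L(e_Y)=\widehat{(e_Y)}_Y$, which is the unit of $\widehat{\cat C}_Y$ because $e_Y\to\unit$ is a $Y$-equivalence. Since $\Psi$ is symmetric monoidal, $\Psi(M\otimes e_{Y_1})\simeq\Psi(M)\otimes_{\widehat{\cat C}_Y}\unit\simeq\Psi(M)$. This shows $\Psi$ inverts $M\otimes e_{Y_1}\to M$. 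To pass to $M\to\widehat M_{Y_1}$, note that $\widehat M_{Y_1}\otimes e_{Y_1}\simeq M\otimes e_{Y_1}$ (the standard MGM equivalence $\Gamma\Lambda\simeq\Gamma$). So both $M$ and $\widehat M_{Y_1}$ map to $\Psi(M\otimes e_{Y_1})$ compatibly, and the two-out-of-three property gives the claim.

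\textbf{Step 2: the induced functor is an equivalence.} Write $\overline\Psi=\Psi|_{\widehat{\cat D}_{Y_1}}$. Its left-adjoint-type factorization shows $\overline\Psi$ is symmetric monoidal and cocontinuous for the completed colimits. I would use the MGM equivalences $\widehat{\cat D}_{Y_1}\simeq\cat D_{Y_1}$ and $\widehat{\cat C}_Y\simeq\cat C_Y$, given by $e\otimes-$ and $\ihom{e,-}$. Under these, $\overline\Psi$ corresponds to $\Psi|_{\cat D_{Y_1}}\colon\cat D_{Y_1}\to\cat C_Y$ (after identifying via $e_Y\otimes-$). Both categories are compactly generated, by $\cat L_{Y_1}$ and $\cat K_Y$ respectively. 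The functor restricts on compacts to the equivalence $\cat L_{Y_1}\simeq\cat K_Y$ of \Cref{lem:completion-spectrum}, since $\Psi$ is the identity on $\cat L$ and $e_Y\otimes L(x)\simeq e_Y\otimes x\simeq x$ for $x\in\cat K_Y$. A cocontinuous functor between compactly generated stable categories that restricts to an equivalence on compact generators is an equivalence. The main obstacle is keeping the compatibility of these identifications straight, in particular checking that $\Psi$ commutes with the MGM equivalences. This reduces to $\Psi(e_{Y_1})\simeq\unit$ together with monoidality.

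\textbf{Step 3: the final claim.} The right adjoint of $\Psi$ factors as $\Psi^R=\iota\circ\overline\Psi^{-1}$, where $\iota\colon\widehat{\cat D}_{Y_1}\hookrightarrow\cat D$ is the inclusion. This holds because $\Psi\simeq\overline\Psi\circ\widehat{(-)}_{Y_1}$ and $\widehat{(-)}_{Y_1}\dashv\iota$. Hence $\Psi^R\Psi\simeq\iota\,\overline\Psi^{-1}\overline\Psi\,\widehat{(-)}_{Y_1}\simeq\widehat{(-)}_{Y_1}$, and this equivalence is compatible with the units.
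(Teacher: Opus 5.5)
Your proof is correct, but it takes a different route from the paper in both main steps. For the factorization, the paper works with mapping spectra. For $k\in\cat K_Y$ it shows $\Hom_{\cat D}(k,M)\simeq\Hom_{\widehat{\cat C}_Y}(k,\Psi M)$, since both sides commute with filtered colimits in $M$ and agree on $\cat L$. It then concludes that $\Psi$ kills $(\cat L_{Y_1})^{\perp}$ because $\cat K_Y$ generates $\widehat{\cat C}_Y$.

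Your Step 1 is purely monoidal. You use $\Psi(e_{Y_1})\simeq\Psi\Lambda(e_Y)\simeq L(e_Y)\simeq\unit$, so $\Psi$ inverts $e_{Y_1}\otimes M\to M$, and $e_{Y_1}\otimes(-)$ inverts completion. This needs only the monoidality of $\Psi$, $\Psi\Lambda\simeq L$, and functoriality of idempotents. It does not need the generation of $\widehat{\cat C}_Y$ by $\cat K_Y$.

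For the equivalence, the paper applies the compact-generator criterion directly to $\overline\Psi$, citing Balmer--Sanders for the compact generation of the two complete categories. You instead conjugate through the MGM equivalences to a functor $\cat D_{Y_1}\to\cat C_Y$. There, compact generation by $\cat L_{Y_1}$ and $\cat K_Y$ is automatic.

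The ``compatibility'' you flag as the main obstacle is not really an issue. Define the torsion-level functor as the conjugate $T\mapsto e_Y\otimes\overline\Psi(\widehat{T}_{Y_1})$. Step 1 identifies it with $e_Y\otimes\Psi(-)$. It is cocontinuous, being a conjugate of a cocontinuous functor by equivalences. Both routes rest on \Cref{lem:completion-spectrum} for the equivalence on compacts, and your Step 3 is the same as the paper's.

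One slip: the fiber of $M\to\widehat M_{Y_1}\simeq\ihom{e_{Y_1},M}$ is $\ihom{f_{Y_1},M}$, not $M\otimes f_{Y_1}$. Knowing that the fiber lies in $(\cat D_{Y_1})^{\perp}$ does not pin it down. You never use the false claim, so the argument stands. With the correct fiber, Step 1 shortens to one line: $e_{Y_1}\otimes\ihom{f_{Y_1},M}\simeq 0$, so $\ihom{f_{Y_1},M}\simeq f_{Y_1}\otimes\ihom{f_{Y_1},M}$, which $\Psi$ kills since $\Psi(f_{Y_1})\simeq L(f_Y)\simeq 0$.
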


\begin{proof}
By construction, $\Psi\colon\cat D=\Ind(\cat L)\to\widehat{\cat C}_Y$ is the filtered-colimit-preserving extension of the fully faithful inclusion $\cat L\hookrightarrow\widehat{\cat C}_Y$. For $k\in\cat K_Y$, regarded as a compact object of $\widehat{\cat C}_Y$ via \Cref{lem:completion-spectrum}, and $M\in\cat D$, we claim there is a natural equivalence of mapping spectra
\[
  \Hom_{\cat D}(k,M)\longrightarrow\Hom_{\widehat{\cat C}_Y}(k,\Psi M).
\]
Indeed, as $M$ varies, the displayed map is a natural transformation of functors $\cat D\to\Sp$. Both functors preserve filtered colimits. The transformation is an equivalence on $\cat L$, where $\Psi$ restricts to the inclusion $\cat L\hookrightarrow\widehat{\cat C}_Y$, and therefore on $\Ind(\cat L)$ by \cite[Proposition~5.3.5.10]{HTTLurie}.

By \Cref{lem:completion-spectrum}, $\cat L_{Y_1}$ identifies with $\cat K_Y$ under the functor $\Psi$. If $N$ belongs to the kernel $(\cat L_{Y_1})^\perp$ of $Y_1$-completion, the displayed equivalence shows that $\Hom_{\widehat{\cat C}_Y}(k,\Psi N)\simeq0$ for every $k\in\cat K_Y$. Since $\cat K_Y$ generates $\widehat{\cat C}_Y$, we have $\Psi N\simeq0$. Thus $\Psi$ factors through a functor $\overline\Psi\colon\widehat{\cat D}_{Y_1}\to\widehat{\cat C}_Y$.

The source and target are compactly generated, with compact objects $\cat L_{Y_1}$ and $\cat K_Y$, respectively \cite[Remark~3.13]{balmer2025tateintermediatevaluetheorem}; hence they identify with $\Ind(\cat L_{Y_1})$ and $\Ind(\cat K_Y)$. Under the identification of \Cref{lem:completion-spectrum}, $\overline\Psi$ restricts to the identity on compact objects. Therefore, \cite[Proposition~5.3.5.11]{HTTLurie} shows that $\overline\Psi$ is an equivalence. In particular, $\Psi$ sends every $Y_1$-completion map to an equivalence. If $L_1\dashv\iota_1$ denotes $Y_1$-completion, then $\Psi\simeq\overline\Psi L_1$ and $\Psi^R\simeq\iota_1\overline\Psi^{-1}$, giving $\Psi^R\Psi\simeq\iota_1L_1=\widehat{(-)}_{Y_1}$, compatibly with the units.
\end{proof}
\begin{Thm}\label{thm:completion-theorem}
Let $\cat K$ be a 2-ring and let $Y\subseteq \Spc(\cat K)$ be a Thomason subset. Then the canonical morphism
\[
    (\varphi,\varphi^\#)\colon 
    \Spf(\widehat{\cat C}^{\dual}_Y,\varphi^{-1}(Y)) 
    \longrightarrow 
    \Spf(\cat K,Y)
\]
is an isomorphism of Dirac ringed spaces.
\end{Thm}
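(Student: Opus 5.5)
By \Cref{lem:completion-spectrum}, the map $\varphi|_{Y_1}\colon Y_1\to Y$ is already a homeomorphism. It therefore suffices to show that $\varphi^\#$ is an isomorphism of sheaves. Since both structure sheaves are sheafifications of presheaves on bases, I will prove that $\varphi^\#$ is an isomorphism already at the presheaf level on a cofinal family of basic opens.

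\textbf{Step 1: matching the bases.} Under $\varphi$, a basic open $\overline U(c)=U(c)\cap Y$ pulls back to $U(Lc)\cap Y_1$. Because $\varphi|_{Y_1}$ is a homeomorphism, these pullbacks form a basis of $Y_1$: every basic open $U(\ell)\cap Y_1$ with $\ell\in\cat L$ is a union of opens of the form $\varphi^{-1}(\overline U(c))$. Sheafification can therefore be computed on the basis $\{\varphi^{-1}\overline U(c)\}_{c\in\cat K}$. Here I will use \Cref{lem:completed-units-well-defined} and the descent argument of \Cref{lem:presheaf-well-defined} to see that the presheaf on $Y_1$ restricted to this sub-basis has the expected values.

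\textbf{Step 2: the sectionwise map.} For $c\in\cat K$, the map in \Cref{lem:functoriality} is
\[
\pi_*\Hom_{\cat C}(e_Y,\unit[c^{-1}])\to\pi_*\Hom_{\cat D}(\Lambda e_Y,\Lambda\unit[c^{-1}])\simeq\pi_*\Hom_{\cat D}(e_{\varphi^{-1}Y},\unit[(Lc)^{-1}])\to\pi_*\Hom_{\cat D}(e_{Y_1},\unit[(Lc)^{-1}]).
\]
Since $Y_1=\varphi^{-1}(Y)$, the last map is the identity. Rewriting via completion, the target is $\pi_*\Hom_{\cat D}(\unit,\Psi^R\Psi\,\unit_{\cat D}[(Lc)^{-1}])$ by \Cref{lem:completion-idempotent}. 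Adjunction turns this into $\pi_*\Hom_{\widehat{\cat C}_Y}(\Psi\unit_{\cat D},\Psi\Lambda\unit[c^{-1}])=\pi_*\Hom_{\widehat{\cat C}_Y}(L\unit,L\unit[c^{-1}])$, using $\Psi\Lambda\simeq L$ and the fact that $\Lambda$ commutes with the idempotents $f_{\supp(c)}$ (\cite[Proposition~5.11]{BalmerSanders17}). In turn, $\pi_*\Hom_{\widehat{\cat C}_Y}(L\unit,L\unit[c^{-1}])\cong\pi_*\Hom_{\cat C}(\unit,\widehat{(\unit[c^{-1}])}_Y)$ by the adjunction $L\dashv\iota$, and this is exactly the source.

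\textbf{Step 3: compatibility.} I must check that the composite of these identifications coincides with the map defined in \Cref{lem:functoriality}, i.e.\ that the unit of $\Psi\dashv\Psi^R$ is compatible with $\Lambda$ on hom-spectra. This should follow formally from the naturality of all the adjunction units and from the last clause of \Cref{lem:completion-idempotent} (that the identification $\Psi^R\Psi\simeq\widehat{(-)}_{Y_1}$ is compatible with units). Naturality in $c$ then shows that the presheaf map is an isomorphism on a basis, hence so is its sheafification.

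\textbf{Main obstacle.} I expect Step 2 to be the hard part, specifically identifying $\Psi\Lambda(\unit[c^{-1}])$ with the right object. The object $\unit[c^{-1}]$ is not compact, so it is not obvious that $\Lambda$ sends it to $\unit_{\cat D}[(Lc)^{-1}]$ as an object of $\cat D$. To handle this I would write $f_{\supp(c)}$ as the cofiber of $e_{\supp(c)}\to\unit$, with $e_{\supp(c)}$ a filtered colimit of compacts in $\cat K_{\supp(c)}$. Then $\Lambda$ and $\Psi$ preserve this presentation, so the identification reduces to the compact case handled by \Cref{lem:completion-spectrum}.
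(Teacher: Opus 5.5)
Your proposal is correct and is essentially the paper's argument: you compare the defining presheaves on the same basis, using the homeomorphism $Y_1\cong Y$, the identifications $\Psi\Lambda\simeq L$ and $\Psi^R\Psi\simeq\widehat{(-)}_{Y_1}$ from \Cref{lem:completion-idempotent}, and idempotence of $e_{Y_1}$ and $e_Y$. The paper avoids your Step~3 bookkeeping with a two-out-of-three argument. Applying $\Psi$ to the target is an equivalence, because mapping out of $e_{Y_1}$ inverts $Y_1$-completion. The composite is the map induced by $L$, i.e.\ postcomposition with $X\to\ihom{e_Y,X}$, which is an equivalence since $e_Y\otimes e_Y\simeq e_Y$. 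The paper also treats $\Lambda(\unit[c^{-1}])\simeq\unit_{\cat D}[(Lc)^{-1}]$ as routine functoriality of Balmer--Favi idempotents, already built into \Cref{lem:functoriality}, so your anticipated ``main obstacle'' is not where the work lies.
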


\begin{proof}
The underlying map is a homeomorphism by \Cref{lem:completion-spectrum}. It remains to compare the defining presheaves on a basic open $\overline U(c)$. The inverse images of these opens form a basis of $Y_1$ because $\varphi|_{Y_1}$ is a homeomorphism. Put $X=\unit[c^{-1}]$. By functoriality of Balmer--Favi idempotents and localization,
\[
  e_{Y_1}\simeq\Lambda(e_Y),\qquad
  \unit_1[L(c)^{-1}]\simeq\Lambda(X),
\]
and the map of presheaves is obtained by applying $\pi_*$ to the following map of mapping spectra induced by $\Lambda$:
\begin{equation}\label{eq:completion-sections}
  \Hom_{\cat C}(e_Y,X)\longrightarrow
  \Hom_{\cat D}(\Lambda e_Y,\Lambda X).
\end{equation}

Applying $\Psi$ to the target of \eqref{eq:completion-sections} gives an equivalence
\[
  \Hom_{\cat D}(\Lambda e_Y,\Lambda X)
  \xrightarrow{\sim}
  \Hom_{\widehat{\cat C}_Y}(L e_Y,L X).
\]
Indeed, under $\Psi\dashv\Psi^R$ this map is induced by the $Y_1$-completion $\id\to\Psi^R\Psi$ from \Cref{lem:completion-idempotent}, and mapping out of $e_{Y_1}$ inverts that map since $e_{Y_1}\otimes e_{Y_1}\simeq e_{Y_1}$. The composite with \eqref{eq:completion-sections} is the map induced by $L\simeq\Psi\Lambda$. Under the adjunction $L\dashv\iota$ of \Cref{rem:completion-formula}, it identifies with the map
\[
\Hom_{\cat C}(e_Y,X)\longrightarrow\Hom_{\cat C}(e_Y,\ihom{e_Y,X})
\]
given by postcomposition with the completion unit $X\to\ihom{e_Y,X}$. Under the tensor--hom adjunction this is precomposition with the equivalence $e_Y\otimes e_Y\to e_Y$ induced by $e_Y\to\unit$. It is therefore an equivalence because $e_Y$ is idempotent.
Thus, in the composite from the source of \eqref{eq:completion-sections} to $\Hom_{\widehat{\cat C}_Y}(L e_Y,L X)$, the second map and the composite are equivalences. Hence \eqref{eq:completion-sections} is an equivalence. Applying $\pi_*$ on every basic open and sheafifying proves that $\varphi^\#$ is an isomorphism.
\end{proof}

\begin{Rem}
    Because of this theorem, we will usually write $\Spf(\widehat{\cat C}^{\dual}_Y)$ instead of the more cumbersome $\Spf(\widehat{\cat C}^{\dual}_Y,\varphi^{-1}(Y))$.
\end{Rem}

We finish this section with two locality results.
\begin{Lem}\label{lem:open-restriction-transitivity}
Let $W\subseteq U\subseteq\Spc(\cat K)$ be quasi-compact opens, and let $W'\subseteq\Spc(\cat K(U))\simeq U$ denote the open corresponding to $W$. Then there is a canonical symmetric monoidal equivalence
\[
  \cat K(U)(W')\simeq \cat K(W)
\]
under $\cat K$. These equivalences are transitive for chains of quasi-compact opens.
\end{Lem}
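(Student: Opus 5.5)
We prove this by passing to the big categories and comparing universal properties of Verdier quotients. Let $Z_U$ and $Z_W$ be the closed complements of $U$ and $W$ in $\Spc(\cat K)$, so $Z_U\subseteq Z_W$. Let $Z'\coloneqq U\setminus W'$ be the complement of $W'$ in $\Spc(\cat K(U))\cong U$. Under the homeomorphism $U\cong\Spc(\cat K(U))$ induced by $q_U$, we have $Z'=Z_W\cap U$. The first step is to identify the thick $\otimes$-ideal $\cat K(U)_{Z'}$ with the thick $\otimes$-ideal generated by $q_U(\cat K_{Z_W})$. Both have support $Z'$: supports are compatible with $q_U$ by \Cref{rem:functoriality}, and $\cat K_{Z_W}$ has support $Z_W$. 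Both are thick $\otimes$-ideals, so Balmer's classification \Cref{thm:balmer} shows they coincide. Quasi-compactness of $W$ ensures that $Z_W$, and hence $Z'$, is Thomason.

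Next we pass to $\cat C=\Ind(\cat K)$ and use \Cref{rem:verdier-quotients-for-small-and-large-categories}. There $\cat C(U)=\cat C/\Loc\langle\cat K_{Z_U}\rangle$ has compact part $\cat K(U)$, so $\Ind(\cat K(U))\simeq\cat C(U)$. By the first step, $\Loc\langle\cat K(U)_{Z'}\rangle$ is the localizing subcategory of $\cat C(U)$ generated by the image of $\cat K_{Z_W}$. Localizations compose, giving
\[
  \cat C(U)/\Loc\langle \cat K(U)_{Z'}\rangle\;\simeq\;\cat C/\Loc\langle \cat K_{Z_U}\cup\cat K_{Z_W}\rangle\;=\;\cat C/\Loc\langle\cat K_{Z_W}\rangle=\cat C(W),
\]
where the middle equality uses $\cat K_{Z_U}\subseteq\cat K_{Z_W}$. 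The composite equivalence is checked on universal properties: a cocontinuous functor out of $\cat C$ killing $\cat K_{Z_W}$ kills $\cat K_{Z_U}$, so it factors uniquely through $\cat C(U)$, and then through the further quotient. Since the localizing subcategories are $\otimes$-ideals, all of this takes place in $\CAlg(\Pr^L_{\mathrm{st}})$ under $\cat C$, so the equivalence is symmetric monoidal.

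We then take compact objects, using the Neeman--Thomason identification of \Cref{rem:verdier-quotients-for-small-and-large-categories} on both sides. This gives $\cat K(U)(W')\simeq\cat C(W)^c\simeq\cat K(W)$ as symmetric monoidal categories under $\cat K$.

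Transitivity for chains $V\subseteq W\subseteq U$ holds because each equivalence is the unique factorization of the quotient functor out of $\cat C$ through a universal property. Uniqueness in the space of such factorizations, which is contractible since these are localizations, forces the composite of two such equivalences to agree with the direct one.

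The main obstacle is making this coherence precise at the $\infty$-categorical level. The cleanest route is to note that $\cat C\to\cat C(U)$ is a smashing localization, i.e. an idempotent algebra $f_{Z_U}$, and that the composite corresponds to $f_{Z_U}\otimes f_{Z_W}\simeq f_{Z_W}$. Idempotent algebras form a poset, so transitivity is automatic.
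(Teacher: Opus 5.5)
Your proof is correct and follows the paper's route. You pass to $\cat C=\Ind(\cat K)$, identify the composite localization $\cat C\to\cat C(U)\to\cat C(U)(W')$ with $\cat C\to\cat C(W)$, and then take compact objects; the paper makes this identification directly via $f_{Z_U}\otimes f_{Z_W}\simeq f_{Z_W}$, which is also your transitivity argument. Your Balmer-classification step, identifying $\cat K(U)_{Z'}$ with the ideal generated by $q_U(\cat K_{Z_W})$, usefully makes explicit why the second localization is the one induced by $Z_W$, a point the paper leaves implicit.
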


\begin{proof}
Let $Z_U$ and $Z_W$ be the closed complements of $U$ and $W$. Since $W\subseteq U$, we have $Z_U\subseteq Z_W$. On Ind-categories, localization at $U$ is tensoring with $f_{Z_U}$, and then localizing further at $W'$ is tensoring with the corresponding idempotent, so the composite localization is tensoring with
\[
  f_{Z_U}\otimes f_{Z_W}\simeq f_{Z_U\cup Z_W}\simeq f_{Z_W}
\]
by \cite[Lemma 1.27]{BarthelHeardSers2023Stratification}. This is the localization directly associated to $W$. Passing to compact objects and idempotent completions gives the stated equivalence. The same idempotent calculation proves transitivity.
\end{proof}
\begin{Thm}\label{Thm:spf-open-restriction}
Let $\cat K$ be a 2-ring and $Y\subseteq \Spc(\cat K)$ a Thomason subset. Let $U \subseteq \Spc(\cat K)$ be a quasi-compact open subset, and let $q_U \colon \cat K\to \cat K(U)$ be the functor from \Cref{rem:verdier-quotient}. Set $Y'\coloneqq Y\cap U\subseteq \Spc(\cat K(U))$. Then the restriction of $\Spf(\cat K,Y)$ to $Y'$ identifies with $\Spf(\cat K(U),Y')$ as a Dirac ringed space:
\[
\Spf(\cat K(U),Y') \xrightarrow{\cong} \Spf(\cat K,Y)\big|_{Y'}.
\]
\end{Thm}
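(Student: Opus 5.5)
Our approach is to obtain the morphism from \Cref{lem:functoriality} and then check, basic open by basic open, that it is an isomorphism on presheaves. Apply \Cref{lem:functoriality} to $F=q_U\colon\cat K\to\cat K(U)$ with the Thomason subsets $Y'\subseteq\Spc(\cat K(U))$ and $Y\subseteq\Spc(\cat K)$. Here $\varphi=\Spc(q_U)$ is the homeomorphism $\Spc(\cat K(U))\cong U$ onto the open subset $U$, so $\varphi(Y')=Y\cap U\subseteq Y$. We obtain a morphism of Dirac ringed spaces $\Spf(\cat K(U),Y')\to\Spf(\cat K,Y)$ whose underlying map is the open embedding $Y'\hookrightarrow Y$. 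Since $Y'=Y\cap U$ is open in $Y$, this is the same as a morphism $\Spf(\cat K(U),Y')\to\Spf(\cat K,Y)|_{Y'}$ that is a homeomorphism on spaces. It remains to show that the sheaf map is an isomorphism.

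For this we use the basis of $Y'$ consisting of the opens $\overline U(c)\cap U=U(c)\cap U\cap Y$ with $c\in\cat K$. These form a basis because $U(c)\cap U$ runs through the quasi-compact opens of $U$ of the form $U(c\oplus d)$, where $U=U(d)$ is quasi-compact; any quasi-compact open is of the form $U(d)$. Two remarks justify working with this basis. First, the restriction of a sheaf to an open subset is computed by restricting sections over basic opens contained in that subset. Second, sheafification commutes with restriction to opens. It therefore suffices to show that the map
\[
\pi_*\Hom_{\cat C}\bigl(e_Y,\unit[(c\oplus d)^{-1}]\bigr)\longrightarrow \pi_*\Hom_{\cat C(U)}\bigl(e_{Y'},\unit_U[q_U(c)^{-1}]\bigr)
\]
constructed in the proof of \Cref{lem:functoriality} is an isomorphism for all such $c$. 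Here $\cat C(U)=\Ind(\cat K(U))$, and we use \Cref{rem:verdier-quotients-for-small-and-large-categories}.

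The main step is this comparison of mapping spectra. The extension $L_U=f_{Z}\otimes-\colon\cat C\to\cat C(U)$, with $Z=U^{\complement}$, has a fully faithful right adjoint $i$. By \Cref{lem:open-restriction-transitivity}, $i\bigl(\unit_U[q_U(c)^{-1}]\bigr)\simeq \unit[(c\oplus d)^{-1}]\simeq f_{Z\cup\supp(c)}$. By adjunction,
\[
\Hom_{\cat C(U)}\bigl(L_U e_Y,\unit_U[q_U(c)^{-1}]\bigr)\simeq\Hom_{\cat C}\bigl(e_Y,f_{Z\cup\supp c}\bigr),
\]
and the map induced by $L_U$ is identified with this adjunction equivalence. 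Next, $L_U(e_Y)\simeq e_{\varphi^{-1}(Y)}=e_{Y'}$ by the functoriality of idempotents. Hence the final restriction map $e_{Y'}\to e_{\varphi^{-1}(Y)}$ in \Cref{lem:functoriality} is the identity.

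The obstacle we expect is bookkeeping rather than mathematics. One must check that the zigzag definition of the restriction maps from \Cref{lem:presheaf-well-defined} is compatible with these identifications, so that the isomorphisms on basic opens assemble into an isomorphism of presheaves. This follows from the naturality of the canonical maps $f_{Z_2}\to f_{Z_1}$ under $i L_U$, which is tensoring with $f_Z$, together with the transitivity statement of \Cref{lem:open-restriction-transitivity}. Sheafifying then shows that $\varphi^\#$ restricted to $Y'$ is an isomorphism.
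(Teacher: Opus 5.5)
Your proposal is correct and follows essentially the same route as the paper: you apply \Cref{lem:functoriality} to $q_U$, reduce to basic opens $U(c\oplus d)\cap Y$ with $U=U(d)$, and show the presheaf map is an isomorphism before sheafifying, using that the localized unit is $U$-local together with the adjunction $L_U\dashv i$ and $L_U(e_Y)\simeq e_{Y'}$. Your extra remarks, that the final restriction $e_{Y'}\to e_{\varphi^{-1}(Y)}$ is the identity and that the identifications are compatible with the restriction maps, make explicit points the paper leaves implicit.
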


\begin{proof}
Since $U$ is quasi-compact, there exists $d \in \cat K$ such that $U = U(d)$ \cite[Proposition~2.14]{Balmer05a}. Consequently, the localization $\cat K(U)$ is identified with the idempotent completion of the Verdier quotient, $(\cat K[d^{-1}])^{\natural}$.

By functoriality (\Cref{lem:functoriality}) applied to $q_U$, we obtain a morphism
\[
(\varphi,\varphi^\#)\colon \Spf(\cat K(U),Y')\longrightarrow \Spf(\cat K,Y).
\]
On underlying topological spaces, $\varphi$ is the inclusion $Y' \hookrightarrow Y$. Since the image of $\varphi$ lies in the open set $Y'$, this morphism factors through the restriction $\Spf(\cat K,Y)|_{Y'}$. To prove that this factorization is an isomorphism, it remains to show that the induced map on structure sheaves is an isomorphism. It suffices to verify this on a basis of the open sets of $Y'$.

Let $\overline{V} \subseteq Y'$ be a basic open set. We can write $\overline{V} = U(c) \cap Y'$ for some $c \in \cat K$. By replacing $c$ with $c \oplus d$ (which satisfies $U(c \oplus d) = U(c) \cap U(d)$), we may assume without loss of generality that $U(c) \subseteq U(d)=U$.

Let $\cat C \coloneqq \Ind(\cat K)$ and let $L \colon \cat C \to \cat C(U)$ be the localization functor, with fully faithful right adjoint $\iota$. We compare the defining presheaves over $\overline{V}$:
\begin{enumerate}
   \item On $\Spf(\cat K, Y)$: Since $U(c) \cap Y = \overline{V}$ (as $U(c) \subseteq U$), the defining presheaf value is
    \[
    {}_{\mathrm p}\mathcal O_{\cat K,Y}(\overline V)
    \cong \pi_* \Hom_{\cat C}(e_Y, \unit_{\cat C}[c^{-1}]).
    \]
    \item On $\Spf(\cat K(U), Y')$: The defining presheaf value is
    \[
    {}_{\mathrm p}\mathcal O_{\cat K(U),Y'}(\overline V)
    \cong \pi_* \Hom_{\cat C(U)}(e_{Y'}, \unit_{\cat C(U)}[c^{-1}]).
    \]
\end{enumerate}
Since $U(c) \subseteq U(d)$, the object $\unit_{\cat C}[c^{-1}]$ is $U$-local. Thus, the canonical map $\unit_{\cat C}[c^{-1}] \to \iota(\unit_{\cat C(U)}[c^{-1}])$ is an equivalence. Using the adjunction $L \dashv \iota$ and the fact that $L(e_Y) \simeq e_{Y \cap U} = e_{Y'}$ (see \cite[Proposition~5.11]{BalmerSanders17}), we obtain natural isomorphisms:
\[
\Hom_{\cat C}(e_Y, \unit_{\cat C}[c^{-1}]) 
\cong \Hom_{\cat C(U)}(L(e_Y), \unit_{\cat C(U)}[c^{-1}])
\cong \Hom_{\cat C(U)}(e_{Y'}, \unit_{\cat C(U)}[c^{-1}]).
\]
This shows that the map of defining presheaves is an isomorphism on basic open sets; after sheafification, the map of sheaves is an isomorphism.
\end{proof}
\section{A sheaf of categories}\label{sec:categorical-sheaf}
We now move on to lifting the sheaf constructed in the previous section to the level of $\infty$-categories. We makes the following assumption.
\begin{Ass}
    Assume throughout this section that $\Spc(\cat K)$ is noetherian, so that every open subset is quasi-compact. We expect that this assumption is unnecessary, but it simplifies the arguments and holds in many examples of interest.
\end{Ass}
\begin{Def}
Let $\cat K\in\tring$, set $\cat C=\Ind(\cat K)$, and fix a Thomason subset $Y\subseteq\Spc(\cat K)$. For an arbitrary open subset $V\subseteq\Spc(\cat K)$, set
\[
\mathcal F_Y(V)\coloneqq\widehat{\cat C}_Y\otimes_{\cat C}\mathcal O_{\cat K}(V)\in\CAlg(\Pr\nolimits_{\mathrm{st}}^L),
\]
where $\mathcal O_{\cat K}$ is the sheaf constructed in \Cref{remark:sheaf-of-categories}. Here $\otimes_{\cat C}$ denotes the relative Lurie tensor product in $\Pr\nolimits_{\mathrm{st}}^L$, equipped with the commutative algebra structure induced by the symmetric monoidal completion functor $\cat C\to\widehat{\cat C}_Y$ and the restriction functor $\cat C\simeq\mathcal O_{\cat K}(\Spc(\cat K))\to\mathcal O_{\cat K}(V)$.
\end{Def}
\begin{Prop}\label{prop:completed-category-sheaf}
The assignment $\mathcal F_Y$ defines a $\CAlg(\Pr\nolimits_{\mathrm{st}}^L)$-valued sheaf on $\Spc(\cat K)$.
\end{Prop}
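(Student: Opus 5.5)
The plan is to show that $\mathcal F_Y=\widehat{\cat C}_Y\otimes_{\cat C}\mathcal O_{\cat K}(-)$ is again a sheaf because base change along $\cat C\to\widehat{\cat C}_Y$ preserves the limit diagrams that witness the sheaf property of $\mathcal O_{\cat K}$. The argument reduces to two points. First, the forgetful functor $\CAlg(\Pr^L_{\mathrm{st}})\to\Pr^L_{\mathrm{st}}$ preserves limits, and the functor $\widehat{\cat C}_Y\otimes_{\cat C}(-)\colon\CAlg(\Pr^L_{\mathrm{st}})_{\cat C/}\to\CAlg(\Pr^L_{\mathrm{st}})_{\widehat{\cat C}_Y/}$ is the pushout. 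It therefore suffices to check the sheaf condition on underlying objects of $\Mod_{\cat C}(\Pr^L_{\mathrm{st}})$. Second, since $\Spc(\cat K)$ is noetherian, every open set is quasi-compact. By the standard reduction for sheaves on spectral spaces with a basis of quasi-compact opens closed under finite intersections (already implicit in the construction of $\mathcal O_{\cat K}$ in \Cref{remark:sheaf-of-categories}), it is enough to check that $\mathcal F_Y(\varnothing)\simeq0$ and that $\mathcal F_Y$ sends a cover $V=V_1\cup V_2$ by two opens to a pullback square. Covers by infinitely many opens reduce to finite ones via quasi-compactness, and finite covers reduce to binary ones by induction.

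For the binary Mayer--Vietoris square I will use the explicit form of $\mathcal O_{\cat K}(V)=\cat C(V)=\cat C\otimes f_{Z}$, where $Z=V^{\complement}$. This is a smashing localization, so $\mathcal O_{\cat K}(V)\simeq\Mod_{\cat C}(f_Z)$ as a $\cat C$-algebra in $\Pr^L_{\mathrm{st}}$. Consequently
\[
\mathcal F_Y(V)\simeq\widehat{\cat C}_Y\otimes_{\cat C}\Mod_{\cat C}(f_Z)\simeq\Mod_{\widehat{\cat C}_Y}(\widehat{(f_Z)}_Y),
\]
by the standard base change formula for module categories over commutative algebras (Lurie, HA 4.8.4.6). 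This category is again a smashing localization of $\widehat{\cat C}_Y$, namely at the idempotent algebra $L(f_Z)$, where $L=\widehat{(-)}_Y$ is symmetric monoidal. The empty set corresponds to $f_{\Spc(\cat K)}=0$, which gives $\mathcal F_Y(\varnothing)=0$.

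For $V=V_1\cup V_2$ with complements $Z_1,Z_2$, we have $Z=Z_1\cap Z_2$ and $Z_1\cup Z_2$ is the complement of $V_1\cap V_2$. Tensoring the cofibre sequences shows that in $\cat C$ there is a pullback square of idempotent algebras
\[
f_{Z}\to f_{Z_1}\times_{f_{Z_1\cup Z_2}}f_{Z_2},
\]
which is an equivalence by the Mayer--Vietoris property of Balmer--Favi idempotents (cf.\ \cite[Lemma 1.27]{BarthelHeardSers2023Stratification}). Applying the exact symmetric monoidal functor $L$ gives the corresponding pullback of idempotent algebras $L f_{Z_i}$ in $\widehat{\cat C}_Y$.

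The main obstacle is upgrading this pullback of idempotent algebras to a pullback of module categories,
\[
\Mod(Lf_Z)\simeq\Mod(Lf_{Z_1})\times_{\Mod(Lf_{Z_1\cup Z_2})}\Mod(Lf_{Z_2}).
\]
I would prove this directly, in the same way as the argument for $\mathcal O_{\cat K}$. Since every category involved is a smashing localization of $\widehat{\cat C}_Y$, the comparison functor has a right adjoint that sends a triple $(M_1,M_2,\alpha)$ to $M_1\times_{M_{12}}M_2$. The unit of the adjunction is an equivalence because tensoring with $Lf_Z$ commutes with finite limits and the square of idempotents is a pullback. The counit is an equivalence because $Lf_{Z_1}\otimes Lf_{Z_2}\simeq Lf_{Z_1\cup Z_2}$ and $Lf_{Z_i}\otimes Lf_{Z_i}\simeq Lf_{Z_i}$ are idempotent. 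Finally, I will check that these identifications are natural in $V$, so that they assemble into an equivalence of $\CAlg(\Pr^L_{\mathrm{st}})$-valued functors.
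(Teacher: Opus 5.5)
Your proof is correct, but it takes a different route from the paper's.

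\textbf{The paper's route.} The paper cites \cite[Theorem~5.28(a)]{aoki2025higherzariskigeometry}: base-changing the Zariski sheaf $\mathcal O_{\cat K}$ along any $\cat C$-module in $\Pr\nolimits^L_{\mathrm{st}}$ again gives a sheaf on quasi-compact opens. Noetherianity makes every open quasi-compact, and the forgetful functors from $\CAlg$ and from $\cat C$-modules create limits, which finishes the proof.

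\textbf{Your route.} You reprove the needed special case by hand, in four steps.
\begin{enumerate}
\item You reduce to the empty set and binary covers. This is exactly the criterion of \cite[Theorem~3.19]{Aoki2025Sheaves} that the paper later uses in \Cref{Lem:subspace-gluing}. The directed-union condition is automatic here: in a noetherian space the union of a directed family is quasi-compact, hence equal to one member, so the family has a maximum.
\item You identify $\mathcal F_Y(V)\simeq\Mod_{Lf_Z}(\widehat{\cat C}_Y)$ by the same base-change step the paper uses in \Cref{Lem:sheaf-identification}.
\item You push the Mayer--Vietoris pullback $f_Z\simeq f_{Z_1}\times_{f_{Z_1\cup Z_2}}f_{Z_2}$ through the exact symmetric monoidal functor $L$.
\item You glue module categories over idempotent algebras. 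This step is sound. The comparison functor preserves colimits between presentable categories, so it has a right adjoint. Computing mapping spaces in the pullback of $\infty$-categories shows this adjoint is $(M_1,M_2,\alpha)\mapsto M_1\times_{M_{12}}M_2$. Your unit and counit checks then need only $Lf_{Z_1}\otimes Lf_{Z_2}\simeq Lf_{Z_1\cup Z_2}$, idempotence, and exactness of $\otimes$.
\end{enumerate}

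\textbf{Comparison.} The paper's citation gives brevity and generality: arbitrary $\cat C$-module coefficients, with no need to unwind restriction maps. Your argument is self-contained and uses only that $\cat C\to\widehat{\cat C}_Y$ is exact and symmetric monoidal. It also makes explicit that the restriction functors of $\mathcal F_Y$ are extension of scalars along $Lf_Z\to Lf_{Z'}$, which the paper in any case needs in the proof of \Cref{thm:categorical-formal-sheaf}.

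The one point you leave as a promise should be written out. The restriction functors of $\mathcal O_{\cat K}$ must correspond to base change $\Mod_{f_Z}(\cat C)\to\Mod_{f_{Z'}}(\cat C)$. This correspondence must also be compatible with $\widehat{\cat C}_Y\otimes_{\cat C}(-)$, which follows from the naturality in the algebra of the base-change equivalence \cite[Theorem~4.8.4.6]{HALurie}. Without it, your pullback of module categories is not yet the square of restriction maps of $\mathcal F_Y$.
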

\begin{proof}
    By \cite[Theorem~5.28(a)]{aoki2025higherzariskigeometry}, the assignment $V \mapsto \widehat{\cat C}_Y \otimes_{\cat C} \mathcal O_{\cat K}(V)$ on quasi-compact opens extends to a sheaf of $\cat C$-modules in $\Pr\nolimits_{\mathrm{st}}^L$. Since every open subset is quasi-compact, this is precisely the underlying module-valued functor of $\mathcal F_Y$. The forgetful functors from commutative algebra objects and from $\cat C$-modules create limits, so $\mathcal F_Y$ is a sheaf with the asserted values.
\end{proof}
We will also need the following result.
\begin{Lem}\label{Lem:sheaf-identification}
With notation as above, let $V \subseteq \Spc(\cat K)$ be open. There is a natural symmetric monoidal equivalence
\[
\mathcal F_Y(V) \simeq \widehat{\cat C(V)}_{Y \cap V}.
\]
\end{Lem}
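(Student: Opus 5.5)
We first identify both sides as smashing localizations, i.e.\ as categories of modules over idempotent algebras in $\cat C$, and then compute the relative tensor product explicitly. Since $\Spc(\cat K)$ is noetherian, $V$ is quasi-compact with closed complement $Z$, and $\mathcal O_{\cat K}(V)=\cat C(V)\simeq f_Z\otimes\cat C\simeq\Mod_{\cat C}(f_Z)$. Because $f_Z$ is an idempotent commutative algebra, base change along $\cat C\to\Mod_{\cat C}(f_Z)$ is given by $\cat M\otimes_{\cat C}\Mod_{\cat C}(f_Z)\simeq\Mod_{\cat M}(f_Z)$ for any $\cat C$-module $\cat M\in\Pr\nolimits^L_{\mathrm{st}}$ (this is \cite[Theorem~4.8.4.6]{HALurie}). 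Taking $\cat M=\widehat{\cat C}_Y$, we obtain a symmetric monoidal equivalence
\[
\mathcal F_Y(V)\simeq\Mod_{\widehat{\cat C}_Y}\bigl(\widehat{f_Z}_Y\bigr),
\]
where $\widehat{f_Z}_Y=L(f_Z)$ is an idempotent algebra in $\widehat{\cat C}_Y$. Concretely, $\mathcal F_Y(V)$ is the full subcategory of $\widehat{\cat C}_Y$ consisting of objects $M$ with $M\simeq \widehat{(f_Z\otimes M)}_Y$, i.e.\ the essential image of the smashing localization $M\mapsto \widehat{(f_Z\otimes M)}_Y$.

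Next we describe $\widehat{\cat C(V)}_{Y\cap V}$ as a full subcategory of $\cat C$ via the fully faithful right adjoint $\iota\colon\cat C(V)\to\cat C$. By \cite[Proposition~5.11]{BalmerSanders17}, the idempotent for $Y\cap V$ in $\cat C(V)$ is $L_V(e_Y)=f_Z\otimes e_Y$. Its completion is therefore $N\mapsto\ihom{f_Z\otimes e_Y,N}$, with $N$ ranging over $f_Z$-local objects. We then construct a comparison functor
\[
\Theta\colon\widehat{\cat C(V)}_{Y\cap V}\to\mathcal F_Y(V),\qquad N\mapsto\widehat N_Y=\ihom{e_Y,N}.
\]
Its inverse candidate is $M\mapsto f_Z\otimes M$ followed by $(Y\cap V)$-completion in $\cat C(V)$.

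It remains to check that these are mutually inverse. We will do this on compact generators. The category $\widehat{\cat C(V)}_{Y\cap V}$ is compactly generated by $\cat K(V)_{Y\cap V}$ \cite[Remark~3.13]{balmer2025tateintermediatevaluetheorem}. On the other side, $\mathcal F_Y(V)$ is generated by the images $\widehat{(f_Z\otimes k)}_Y$ for $k\in\cat K_Y$, using \Cref{lem:completion-spectrum}. For $k\in\cat K_Y$, the object $f_Z\otimes k$ is the image of $k$ in $\cat K(V)_{Y\cap V}$, and every object of $\cat K(V)_{Y\cap V}$ is a retract of such an image. The mapping spectra agree because, for $Y$-torsion $k$, we have
\[
\Hom(k,\ihom{e_Y,N})\simeq\Hom(k,N)
\]
and $k\otimes e_Y\simeq k$. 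We thus reduce to comparing $\Hom_{\cat C}(f_Z\otimes k,f_Z\otimes k')$ on both sides, which are literally the same spectrum.

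We expect the main obstacle to be monoidality and naturality in $V$, rather than the objectwise equivalence. For that we intend to produce $\Theta$ as the canonical map arising from the universal property of the relative tensor product. Both $\widehat{\cat C}_Y\to\widehat{\cat C(V)}_{Y\cap V}$ and $\cat C(V)\to\widehat{\cat C(V)}_{Y\cap V}$ are symmetric monoidal localizations, and they are compatible over $\cat C$ because $L_V(e_Y)\simeq e_{Y\cap V}$. They therefore induce a symmetric monoidal functor out of $\widehat{\cat C}_Y\otimes_{\cat C}\cat C(V)$. We then conclude by showing that this functor preserves compact generators and is fully faithful on them, as above.
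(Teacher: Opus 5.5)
Your proof is correct, and its first step, base change along $\cat C\to\Mod_{f_Z}(\cat C)$ giving $\mathcal F_Y(V)\simeq\Mod_{Lf_Z}(\widehat{\cat C}_Y)$, is exactly the paper's. After that the two routes differ.

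\textbf{The paper's route.} It applies a lemma of Behrens--Shah. For the smashing adjunction $f_Z\otimes-\dashv\iota$ (right adjoint conservative, cocontinuous, satisfying the projection formula), Bousfield localization at $e_Y$ commutes with passing to $f_Z$-modules. This gives $\Mod_{Lf_Z}(\widehat{\cat C}_Y)\simeq(\Mod_{f_Z}(\cat C))_{f_Z\otimes e_Y}$ as a symmetric monoidal equivalence in one stroke. It then only has to identify the acyclics with $\cat C(V)_{Y\cap V}^{\perp}$, using $L_V(e_Y)\simeq e_{Y\cap V}$.

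\textbf{Your route.} You obtain the comparison functor from the pushout property of $\widehat{\cat C}_Y\otimes_{\cat C}\cat C(V)$ in $\CAlg(\Pr\nolimits^L_{\mathrm{st}})$ and test it on compact generators. This needs compact generation of both completed categories (\Cref{lem:completion-spectrum} and its analogue for $\cat C(V)$). It also needs the computation $\Hom(Lf_Z\hat\otimes k,Lf_Z\hat\otimes k')\simeq\Hom_{\cat C}(k,f_Z\otimes k')$ for $k,k'\in\cat K_Y$. In exchange, monoidality and naturality in $V$ come for free, which the paper treats somewhat informally. The paper's route avoids compact generation altogether.

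\textbf{Two minor points.} Your $\Theta$ goes from $\widehat{\cat C(V)}_{Y\cap V}$ to $\mathcal F_Y(V)$, but the pushout functor goes the other way, so what you actually construct is $\Theta^{-1}$. Also, the claim that every object of $\cat K(V)_{Y\cap V}$ is a retract of some $f_Z\otimes k$ is true (Neeman--Thomason applied to $\cat C_Y\to f_Z\otimes\cat C_Y$) but unnecessary, since generation is all you use.

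\textbf{A shortcut.} Your two descriptions already prove the lemma directly. Use that $f_Z\otimes N\simeq N$ for $f_Z$-local $N$, and that $f_Z$-local objects are closed under $\ihom{X,-}$. Then both categories are the same full subcategory of $\cat C$: the objects that are both $f_Z$-local and $Y$-complete. Each is the symmetric monoidal localization $M\mapsto\ihom{e_Y,f_Z\otimes M}$ of $\cat C$, so they agree as symmetric monoidal categories without any generator argument.
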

\begin{proof}
Let $Z$ denote the closed complement of $V$. Since $V$ is quasi-compact, $\cat C(V)$ is a finite localization of $\cat C$, and there are symmetric monoidal equivalences $\mathcal O_{\cat K}(V) \simeq \cat C(V) \simeq \Mod_{f_Z}(\cat C)$. Base change for module categories \cite[Theorem~4.8.4.6 and the proof of Proposition~4.8.5.1]{HALurie} gives an equivalence
\[
\mathcal F_Y(V) \simeq \widehat{\cat C}_Y \otimes_{\cat C} \Mod_{f_Z}(\cat C) \simeq \Mod_{L f_Z}(\widehat{\cat C}_Y),
\]
where $L \colon \cat C \to \widehat{\cat C}_Y$ is the completion functor. Since $L$ is symmetric monoidal and $f_Z$ is commutative, the canonical comparison is symmetric monoidal, and hence the displayed equivalence is an equivalence in $\CAlg(\Pr\nolimits_{\mathrm{st}}^L)$.
It remains to identify this with the category of $(Y \cap V)$-complete objects in $\cat C(V)$. Consider the smashing localization adjunction
\[
F = f_Z \otimes - \colon \cat C \rightleftarrows{}{} \cat C(V) \noloc \iota
\]
and the object $e_Y \in \cat C$. The right adjoint $\iota$ is conservative and preserves colimits, and the adjunction satisfies the projection formula. Since Bousfield localization at $e_Y$ is $Y$-completion, \cite[Lemma~3.7]{BehrensShah2020Equivariant} gives a symmetric monoidal equivalence
\[
\Mod_{L f_Z}(\widehat{\cat C}_Y) \simeq \bigl(\Mod_{f_Z}(\cat C)\bigr)_{f_Z \otimes e_Y},
\]
where the right-hand side denotes Bousfield localization at $f_Z \otimes e_Y$. For an object $M \in \Mod_{f_Z}(\cat C)$, we have
\[
(f_Z \otimes e_Y) \otimes_{f_Z} M \simeq e_Y \otimes M,
\]
so $M$ is acyclic precisely when $e_Y \otimes M \simeq 0$. Since $f_Z \otimes e_Y$ is the torsion idempotent associated to $Y \cap V$ in $\cat C(V)$ by \cite[Proposition~5.11]{BalmerSanders17}, these acyclic objects form $\cat C(V)_{Y \cap V}^{\perp}$. Their right orthogonal is, by definition, $\widehat{\cat C(V)}_{Y \cap V}$, proving the result. The identifications are natural with respect to restriction to smaller opens. Explicitly, for $W\subseteq V$, the induced restriction functor on complete objects is
\[
\widehat{\cat C(V)}_{Y\cap V}\longrightarrow\widehat{\cat C(W)}_{Y\cap W},
\qquad M\longmapsto\widehat{(M|_W)}_{Y\cap W},
\]
where $M|_W$ denotes restriction in the uncompleted categories. This is the functor induced on Bousfield localizations by $\cat C(V)\to\cat C(W)$, as in the cited lemma.
\end{proof}
In order to construct the sheaf on $Y \subseteq \Spc(\cat K)$ we will need the following topological result. 
\begin{Lem}\label{Lem:subspace-gluing}
Let $i \colon Y \hookrightarrow X$ be the inclusion of a subspace and let $\cat E$ be an $\infty$-category with all small limits. Then the pushforward functor
\[
i_* \colon \Shv(Y;\cat E) \longrightarrow \Shv(X;\cat E)
\]
is fully faithful. Its essential image consists precisely of those sheaves $\cat F$ satisfying
\begin{equation*}\tag{\textasteriskcentered}\label{eq:sheafcondition}
V_1 \subseteq V_2 \subseteq X \text{ open with } V_1 \cap Y = V_2 \cap Y
\quad\Longrightarrow\quad
\cat F(V_2) \xrightarrow{\sim} \cat F(V_1).
\end{equation*}
For such a sheaf $\cat F$, the corresponding sheaf on $Y$ is given by $\cat G(U) \coloneqq \cat F(X \setminus \overline{Y \setminus U})$. In particular, there is a natural equivalence $\cat G(V \cap Y) \simeq \cat F(V)$ for every open subset $V \subseteq X$.
\end{Lem}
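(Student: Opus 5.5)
The plan is to construct an explicit inverse to $i_*$ on the subcategory of sheaves satisfying \eqref{eq:sheafcondition}, and to check that $i^*$-type restriction recovers the original sheaf on $Y$. Recall $(i_*\cat G)(V)=\cat G(V\cap Y)$. First I will observe that $i_*\cat G$ always satisfies \eqref{eq:sheafcondition}: if $V_1\cap Y=V_2\cap Y$, the restriction map $\cat G(V_2\cap Y)\to\cat G(V_1\cap Y)$ is the identity. So the essential image lies in the stated subcategory.

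Conversely, given $\cat F$ satisfying \eqref{eq:sheafcondition}, I will define $\cat G(U)\coloneqq \cat F(\widetilde U)$, where $\widetilde U\coloneqq X\setminus\overline{Y\setminus U}$. The key topological facts are these. First, $\widetilde U$ is the largest open subset of $X$ whose intersection with $Y$ equals $U$. Indeed, $\widetilde U\cap Y=U$: a point of $U$ has an open neighbourhood $V$ with $V\cap Y=U'\subseteq U$, which is disjoint from $Y\setminus U$, hence not in the closure; points of $Y\setminus U$ are excluded. Moreover, any open $V$ with $V\cap Y\subseteq U$ is disjoint from $Y\setminus U$ and hence from its closure. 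Second, $U\mapsto\widetilde U$ is monotone and preserves finite intersections. With these, $\cat G$ is a functor on $\Open(Y)^{\op}$, and for any open $V\subseteq X$ we have $V\subseteq\widetilde{V\cap Y}$ with the same trace on $Y$. Applying \eqref{eq:sheafcondition} then gives $\cat G(V\cap Y)\simeq\cat F(V)$, which is the claimed natural equivalence $i_*\cat G\simeq\cat F$.

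It remains to check that $\cat G$ is a sheaf and that $\cat G\circ$-construction inverts $i_*$. For a cover $U=\bigcup U_\alpha$ in $Y$, the opens $\widetilde{U_\alpha}$ cover an open $W=\bigcup\widetilde{U_\alpha}\subseteq\widetilde U$ with $W\cap Y=U$. By \eqref{eq:sheafcondition}, $\cat F(\widetilde U)\simeq\cat F(W)$. The sheaf condition for $\cat F$ on the cover $\{\widetilde{U_\alpha}\}$ of $W$, whose finite intersections are $\widetilde{U_{\alpha_0}\cap\dots\cap U_{\alpha_n}}$, then yields the sheaf condition for $\cat G$ (in the Čech/sieve formulation for $\infty$-categorical sheaves). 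Starting instead from $\cat G'$ on $Y$, we get $(i_*\cat G')(\widetilde U)=\cat G'(\widetilde U\cap Y)=\cat G'(U)$, so the two constructions are mutually inverse. This gives an equivalence between $\Shv(Y;\cat E)$ and the full subcategory of $\Shv(X;\cat E)$ cut out by \eqref{eq:sheafcondition}.

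Full faithfulness of $i_*$ then follows. A cleaner route is to note that the counit $i^*i_*\cat G\to\cat G$ is an equivalence: the presheaf pullback is $\operatorname{colim}_{V\supseteq U}\cat G(V\cap Y)$, and this colimit is indexed by a category with terminal object $\widetilde U$, so it equals $\cat G(U)$ already before sheafification. The main obstacle I expect is making the sheaf-condition step rigorous at the $\infty$-categorical level, specifically handling the sieve generated by the $\widetilde{U_\alpha}$ rather than the covering of $\widetilde U$ itself. I would handle this by passing through $W$ and \eqref{eq:sheafcondition} as above, which only requires finite-intersection compatibility of $\widetilde{(-)}$.
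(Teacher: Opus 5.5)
Your argument is correct and is essentially the paper's proof. It uses the same right adjoint $U\mapsto\widetilde U=X\setminus\overline{Y\setminus U}$ to $V\mapsto V\cap Y$, the same sheaf $\cat G=\cat F\circ\widetilde{(-)}$, and the same use of $(\ast)$ on $V\subseteq\widetilde{V\cap Y}$. The only variations are that you check the sheaf axiom on \v{C}ech nerves (using preservation of finite intersections) rather than via the empty/binary/directed criterion of \cite{Aoki2025Sheaves}, and that you get full faithfulness from an explicit inverse rather than from $q$ being a poset localization.

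Drop the ``cleaner route'' aside, though. $\widetilde U$ is not terminal among the opens $V\supseteq U$ that index the pullback colimit. That colimit does reduce to $\cat G(U)$, but only because $V\mapsto V\cap Y$ is cofinal, its comma posets being nonempty and closed under finite intersections. Moreover, $i^*$ need not exist at all, since $\cat E$ is only assumed to have limits.
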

\begin{proof}
There is an adjoint pair
\[
q \colon \Open(X) \rightleftarrows{}{} \Open(Y) \noloc r
\]
given by $q(V) = V \cap Y$ and $r(U) = X \setminus \overline{Y \setminus U}$; see \cite[\S1]{ErnePicadoPultr2022Adjoint}. The functor $r$ is fully faithful because $qr(U) = U$, so $q$ is a localization of posets. The pushforward $i_*$ is given by precomposition with $q^{\op}$, so $(i_* \cat G)(V) = \cat G(V \cap Y)$. Since precomposition with a localization is fully faithful, $i_*$ is fully faithful. Moreover, $q$ preserves arbitrary unions and finite intersections, so it carries sheaves on $Y$ to sheaves on $X$. 

Suppose first that $\cat F = i_* \cat G$ for some sheaf $\cat G$ on $Y$. If $V_1 \subseteq V_2$ and $V_1 \cap Y = V_2 \cap Y$, then the restriction map $\cat F(V_2) = \cat G(V_2 \cap Y) \to \cat G(V_1 \cap Y) = \cat F(V_1)$ is an equivalence. Thus every sheaf in the essential image satisfies \eqref{eq:sheafcondition}. Conversely, suppose that $\cat F$ satisfies \eqref{eq:sheafcondition} and define a presheaf $\cat G$ on $Y$ by $\cat G(U) \coloneqq \cat F(r(U))$. By \cite[Theorem~3.19]{Aoki2025Sheaves}, it is enough to verify the following three conditions.
\begin{enumerate}[label=\textup{(\alph*)}]
\item The object $\cat G(\varnothing)$ is terminal.
\item For any opens $U,U' \subseteq Y$, the canonical map $\cat G(U \cup U') \to \cat G(U) \times_{\cat G(U \cap U')} \cat G(U')$ is an equivalence.
\item For every directed subset $D \subseteq \Open(Y)$, the canonical map $\cat G(\bigcup_{U \in D} U) \to \varprojlim_{U \in D} \cat G(U)$ is an equivalence.
\end{enumerate}
For \textup{(a)}, observe that $r(\varnothing) \cap Y = \varnothing$. Hence \eqref{eq:sheafcondition} gives $\cat G(\varnothing) = \cat F(r(\varnothing)) \simeq \cat F(\varnothing)$, which is terminal.  For \textup{(b)}, since $r$ is a right adjoint, we have $r(U \cap U') = r(U) \cap r(U')$. Moreover, $r(U) \cup r(U') \subseteq r(U \cup U')$, and these two opens have the same intersection with $Y$. Condition \eqref{eq:sheafcondition} therefore gives $\cat F(r(U \cup U')) \simeq \cat F(r(U) \cup r(U'))$. The required pullback square for $\cat G$ consequently identifies with the pullback square for $\cat F$ associated to $r(U)$ and $r(U')$.

Finally, to see \textup{(c)}, let $D \subseteq \Open(Y)$ be directed. Since $r$ is order-preserving, the family $\{r(U)\}_{U \in D}$ is directed. Moreover, $\bigcup_{U \in D} r(U) \subseteq r(\bigcup_{U \in D} U)$, and these two opens have the same intersection with $Y$. Thus \eqref{eq:sheafcondition} and the directed-union condition for $\cat F$ give
\[
\cat G\left(\bigcup_{U \in D} U\right)
\simeq \cat F\left(\bigcup_{U \in D} r(U)\right)
\simeq \varprojlim_{U \in D} \cat F(r(U))
= \varprojlim_{U \in D} \cat G(U).
\]
Thus $\cat G$ is a sheaf. Finally, for every open $V \subseteq X$, the inclusion $V \subseteq rq(V)$ becomes an equality after intersecting with $Y$. Hence \eqref{eq:sheafcondition} gives $(i_* \cat G)(V) = \cat F(rq(V)) \xrightarrow{\sim} \cat F(V)$. Therefore $i_* \cat G \simeq \cat F$, and $\cat G(V \cap Y) \simeq \cat F(V)$ naturally in $V$.
\end{proof}
\begin{Thm}\label{thm:categorical-formal-sheaf}
Let $\Spc(\cat K)$ be noetherian. There is a sheaf of stable homotopy theories
\[
\cat O_{\Spf(\cat K,Y)}^{\infty} \colon \Open(Y)^{\op} \longrightarrow \CAlg(\Pr\nolimits_{\mathrm{st}}^L)
\]
satisfying the following properties.
\begin{enumerate}[label=\textup{(\alph*)}]
\item There are natural equivalences $\cat O_{\Spf(\cat K,Y)}^{\infty} \simeq i^{-1} \mathcal F_Y$ and $i_* \cat O_{\Spf(\cat K,Y)}^{\infty} \simeq \mathcal F_Y$, where $i \colon Y \hookrightarrow \Spc(\cat K)$ is the inclusion.
\item On a basic open $\overline U = U \cap Y$, where $U \subseteq \Spc(\cat K)$ is quasi-compact open, there are natural symmetric monoidal equivalences
\[
\cat O_{\Spf(\cat K,Y)}^{\infty}(\overline U) \simeq \widehat{\cat C}_Y \otimes_{\cat C} \mathcal O_{\cat K}(U) \simeq \widehat{\cat C(U)}_{Y \cap U}.
\]
\end{enumerate}
\end{Thm}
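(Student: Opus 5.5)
The plan is to obtain $\cat O^{\infty}_{\Spf(\cat K,Y)}$ by descending the sheaf $\mathcal F_Y$ of \Cref{prop:completed-category-sheaf} along the inclusion $i\colon Y\hookrightarrow\Spc(\cat K)$, using \Cref{Lem:subspace-gluing} with $\cat E=\CAlg(\Pr\nolimits^L_{\mathrm{st}})$. This $\infty$-category has all small limits, since limits in $\Pr^L_{\mathrm{st}}$ exist and are created in $\widehat{\Cat}_\infty$, and $\CAlg(-)$ creates them. Concretely, I will define $\cat O^{\infty}_{\Spf(\cat K,Y)}(U')\coloneqq\mathcal F_Y\bigl(\Spc(\cat K)\setminus\overline{Y\setminus U'}\bigr)$. \Cref{Lem:subspace-gluing} then gives a sheaf on $Y$ with $i_*\cat O^{\infty}\simeq\mathcal F_Y$ and $\cat O^\infty(V\cap Y)\simeq\mathcal F_Y(V)$, as soon as $\mathcal F_Y$ satisfies condition \eqref{eq:sheafcondition}.

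The main step is therefore to verify \eqref{eq:sheafcondition}. Let $V_1\subseteq V_2$ be opens, quasi-compact by noetherianity, with $V_1\cap Y=V_2\cap Y$. By \Cref{Lem:sheaf-identification} the restriction $\mathcal F_Y(V_2)\to\mathcal F_Y(V_1)$ identifies with
\[
\widehat{\cat C(V_2)}_{Y\cap V_2}\longrightarrow\widehat{\cat C(V_1)}_{Y\cap V_1},\qquad M\longmapsto\widehat{(M|_{V_1})}_{Y\cap V_1}.
\]
I will show that this functor is an equivalence by working inside $\cat C$. Write $Z_j$ for the complement of $V_j$. Both categories embed fully faithfully in $\cat C$, respectively as $\ihom{e_Y,-}$ applied to $f_{Z_2}$-modules and to $f_{Z_1}$-modules. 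The computation in the proof of \Cref{lem:completed-units-well-defined} gives $e_Y\otimes e_{Z_1}\otimes f_{Z_2}\simeq 0$. Hence for any $M$ we get $e_Y\otimes f_{Z_2}\otimes M\simeq e_Y\otimes f_{Z_1}\otimes f_{Z_2}\otimes M$, and the map $M\to f_{Z_1}\otimes M$ becomes an equivalence after applying $\ihom{e_Y,-}$. This shows the restriction functor is fully faithful. For essential surjectivity, I will use that both categories are generated under colimits by compact objects: $\cat C(V_j)_{Y\cap V_j}$ is generated by $f_{Z_j}\otimes\cat K_Y$, and the same vanishing shows $f_{Z_2}\otimes k\simeq f_{Z_1}\otimes k$ for $k\in\cat K_Y$. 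Alternatively, one can invoke \Cref{lem:completion-spectrum} for $\cat K(V_j)$: the compacts of each completed category are $\cat K(V_j)_{Y\cap V_j}$, which agree because $Y\cap V_1=Y\cap V_2$ under \Cref{lem:open-restriction-transitivity}. I expect this step, making the identification natural and compatible with the symmetric monoidal structures, to be the main obstacle. The fallback is that a symmetric monoidal functor whose underlying functor is an equivalence is an equivalence in $\CAlg$.

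With \eqref{eq:sheafcondition} established, part (a) follows. The equivalence $i_*\cat O^\infty\simeq\mathcal F_Y$ is given by \Cref{Lem:subspace-gluing}. The equivalence $i^{-1}\mathcal F_Y\simeq\cat O^\infty$ follows because $i_*$ is fully faithful, so the counit $i^{-1}i_*\to\id$ is an equivalence. Here $i^{-1}$ is the left adjoint on sheaves, which exists since $\cat E$ is presentable; alternatively, one can simply define $i^{-1}$ on the essential image as the inverse of $i_*$.

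Part (b) is then the chain
\[
\cat O^\infty(U\cap Y)\simeq\mathcal F_Y(U)=\widehat{\cat C}_Y\otimes_{\cat C}\mathcal O_{\cat K}(U)\simeq\widehat{\cat C(U)}_{Y\cap U},
\]
where the last equivalence is \Cref{Lem:sheaf-identification}. Naturality in $U$ comes from the naturality statement at the end of that lemma together with the natural equivalence in \Cref{Lem:subspace-gluing}.
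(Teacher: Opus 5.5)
Your proposal is correct and follows the paper's proof: check \eqref{eq:sheafcondition} for $\mathcal F_Y$, descend along $i$ via \Cref{Lem:subspace-gluing}, then read off (a) from full faithfulness of $i_*$ and (b) from \Cref{Lem:sheaf-identification}. The only difference is in checking \eqref{eq:sheafcondition}: the paper works in the model $\Mod_{Lf_{Z_j}}(\widehat{\cat C}_Y)$, where restriction is extension of scalars along the algebra equivalence $Lf_{Z_2}\simeq Lf_{Z_1}$ from \Cref{lem:completed-units-well-defined}, whereas you argue inside $\cat C$ with the same vanishing $e_Y\otimes e_{Z_1}\otimes f_{Z_2}\simeq 0$ (there essential surjectivity is automatic, since $Z_2\subseteq Z_1$ makes every $f_{Z_1}$-local complete object $f_{Z_2}$-local complete). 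One small correction: $\CAlg(\Pr\nolimits^L_{\mathrm{st}})$ is not presentable, so instead of invoking a global left adjoint $i^{-1}$, phrase (a) through the universal property $\Map(\cat O^{\infty}_{\Spf(\cat K,Y)},\cat H)\simeq\Map(\mathcal F_Y,i_*\cat H)$ coming from full faithfulness of $i_*$, as the paper does.
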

\begin{proof}
    By \Cref{prop:completed-category-sheaf}, $\mathcal F_Y$ is a sheaf. We first verify hypothesis \eqref{eq:sheafcondition} of \Cref{Lem:subspace-gluing}. Let $V_1 \subseteq V_2$ be opens with $V_1 \cap Y = V_2 \cap Y$, and write $Z_j = \Spc(\cat K) \setminus V_j$. Both opens are quasi-compact, so \Cref{lem:completed-units-well-defined} implies that the canonical map $L f_{Z_2} \to L f_{Z_1}$ is an equivalence of commutative algebras in $\widehat{\cat C}_Y$, where $L \colon \cat C \to \widehat{\cat C}_Y$ is completion. Indeed, the lemma identifies its image under the fully faithful right adjoint $\widehat{\cat C}_Y \to \cat C$ as an equivalence. Under the natural base-change identifications in the proof of \Cref{Lem:sheaf-identification}, the restriction functor $\mathcal F_Y(V_2) \to \mathcal F_Y(V_1)$ identifies with extension of scalars
\[
\Mod_{L f_{Z_2}}(\widehat{\cat C}_Y) \longrightarrow \Mod_{L f_{Z_1}}(\widehat{\cat C}_Y).
\]
It is therefore an equivalence, proving \eqref{eq:sheafcondition}.

By \Cref{Lem:subspace-gluing}, applied with $\cat E = \CAlg(\Pr\nolimits_{\mathrm{st}}^L)$, there is an essentially unique sheaf $\cat G$ on $Y$ with $i_* \cat G \simeq \mathcal F_Y$, together with natural equivalences $\cat G(V \cap Y) \simeq \mathcal F_Y(V)$ for every open $V \subseteq \Spc(\cat K)$. Set $\cat O^{\infty}_{\Spf(\cat K,Y)} \coloneqq \cat G$. Taking $V = U$ gives the first equivalence in \textup{(b)}, and the second is \Cref{Lem:sheaf-identification}.
For \textup{(a)}, the equivalence $i_* \cat O^{\infty}_{\Spf(\cat K,Y)} \simeq \mathcal F_Y$ is what we have just constructed. Full faithfulness of $i_*$ gives natural equivalences of mapping spaces
\[
\operatorname{Map}(\cat G, \cat H) \simeq \operatorname{Map}(i_* \cat G, i_* \cat H) \simeq \operatorname{Map}(\mathcal F_Y, i_* \cat H)
\]
for every $\CAlg(\Pr\nolimits_{\mathrm{st}}^L)$-valued sheaf $\cat H$ on $Y$. Thus $\cat G$ satisfies the universal property of $i^{-1} \mathcal F_Y$, giving the remaining equivalence.
\end{proof}
\begin{Cor}\label{cor:categorical-formal-structure-sheaf}
The endomorphism spectra of the tensor units define a sheaf
\[
\mathcal O^{\unit}_{\Spf(\cat K,Y)}
\coloneqq \End(\unit_{(-)})\circ\cat O^{\infty}_{\Spf(\cat K,Y)}
\colon\Open(Y)^{\op}\longrightarrow\CAlg(\Sp).
\]
There is a natural isomorphism of sheaves of Dirac rings
\[
a\bigl(\pi_*\mathcal O^{\unit}_{\Spf(\cat K,Y)}\bigr)
\cong\mathcal O_{\Spf(\cat K,Y)},
\]
where $\pi_*$ is applied sectionwise and $a$ denotes sheafification.
\end{Cor}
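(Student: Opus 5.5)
The plan is to show that the presheaf $\pi_*\mathcal O^{\unit}_{\Spf(\cat K,Y)}$ and the defining presheaf ${}_{\mathrm p}\mathcal O_{\cat K,Y}$ agree on the basis $\overline{\cat B}=\{\overline U(c)\}$ of $Y$. The first assertion, that $\mathcal O^{\unit}_{\Spf(\cat K,Y)}$ is a sheaf, is formal. By \Cref{remark:structure-sheaf-of-ring-spectra}, $\End(\unit_{(-)})$ is a right adjoint $\CAlg(\Pr\nolimits^L_{\mathrm{st}})\to\CAlg(\Sp)$, so it preserves limits. Postcomposing the sheaf of \Cref{thm:categorical-formal-sheaf} with it therefore yields a $\CAlg(\Sp)$-valued sheaf on $Y$.

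For the comparison I would evaluate on a basic open $\overline U(c)=U(c)\cap Y$. Since $\Spc(\cat K)$ is noetherian, $U(c)$ is quasi-compact, and \Cref{thm:categorical-formal-sheaf}(b) gives
\[
\cat O^{\infty}_{\Spf(\cat K,Y)}(\overline U(c))\simeq\widehat{\cat C(U(c))}_{Y\cap U(c)}.
\]
The unit of the right-hand side is the completion of $\unit[c^{-1}]$ inside $\cat C(U(c))$. Using the proof of \Cref{Lem:sheaf-identification}, this category is $\Mod_{Lf_Z}(\widehat{\cat C}_Y)$ with $Z=\supp(c)$. Its unit is $Lf_Z=\widehat{(\unit[c^{-1}])}_Y$ regarded as an algebra in $\widehat{\cat C}_Y$, so
\[
\End(\unit)\simeq\Hom_{\widehat{\cat C}_Y}(L\unit,Lf_Z)\simeq\Hom_{\cat C}\bigl(\unit,\widehat{(\unit[c^{-1}])}_Y\bigr).
\]
Applying $\pi_*$ recovers exactly ${}_{\mathrm p}\mathcal O_{\cat K,Y}(\overline U(c))$, as written in \Cref{def:formal-balmer}. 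I would double-check this against the alternative description of the unit as the completion of $f_Z$ inside $\cat C(U)$. There, mapping out of $e_{Y\cap U}=f_Z\otimes e_Y$ in $\cat C(U)$ and adjoining reproduces $\Hom_{\cat C}(e_Y,\unit[c^{-1}])$ from \Cref{rem:adjoint-sheaf}.

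Next I must check naturality: for $\overline U(c_1)\subseteq\overline U(c_2)$, the restriction on $\mathcal O^{\unit}$ has to match the restriction zigzag of \Cref{lem:presheaf-well-defined}. By the description at the end of \Cref{Lem:sheaf-identification}, restriction is extension of scalars along $Lf_{Z_2}\to Lf_{Z_1}$. On units this is the canonical map $\widehat{(f_{Z_2})}_Y\to\widehat{(f_{Z_1})}_Y$, which is the map used in the paper's restriction. When the two representatives define the same trace on $Y$, \Cref{lem:completed-units-well-defined} makes the comparison compatible with the inverted arrow of the zigzag. I expect this coherence step to be the main obstacle. The identification in \Cref{thm:categorical-formal-sheaf}(b) is given through the abstract gluing of \Cref{Lem:subspace-gluing}, via $r(U)$ rather than $U$. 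So one has to check that the equivalences $\cat G(V\cap Y)\simeq\mathcal F_Y(V)$ are natural on the nose in $V$, including restriction along $V\subseteq rq(V)$. I would argue this from the final paragraph of the proof of \Cref{Lem:subspace-gluing}, where this naturality is built in.

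With a natural isomorphism of presheaves on the basis $\overline{\cat B}$ in hand, sheafification finishes the proof. A presheaf and its restriction to a basis have the same sheafification, so $a(\pi_*\mathcal O^{\unit}_{\Spf(\cat K,Y)})\cong a({}_{\mathrm p}\mathcal O_{\cat K,Y})=\mathcal O_{\Spf(\cat K,Y)}$. Here $\pi_*$ is applied sectionwise, and the resulting presheaf of Dirac rings need not already be a sheaf, which is why $a$ appears.
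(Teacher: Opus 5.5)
Your proposal is correct and follows essentially the same route as the paper. It uses limit-preservation of $\End(\unit_{(-)})$ for the sheaf property, identifies the sections over $\overline U(c)$ with $\Mod_{Lf_Z}(\widehat{\cat C}_Y)$, gets $\Hom_{\cat C}(\unit,\widehat{(f_Z)}_Y)$ from the unit $Lf_Z$ via the free--forgetful and completion adjunctions, treats restriction as extension of scalars along $Lf_{Z_2}\to Lf_{Z_1}$, and finishes by sheafification. Your extra checks on the zigzag of \Cref{lem:presheaf-well-defined} and on the naturality built into \Cref{Lem:subspace-gluing} just spell out what the paper asserts in one line.
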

\begin{proof}
The functor $\End(\unit_{(-)})$ preserves limits by \Cref{remark:structure-sheaf-of-ring-spectra}, so the displayed composite is a sheaf. Let $\overline U=U\cap Y$ be a basic open, write $Z=\Spc(\cat K)\setminus U$, and let $L\colon\cat C\to\widehat{\cat C}_Y$ denote completion. Under the natural symmetric monoidal equivalence
\[
\cat O^{\infty}_{\Spf(\cat K,Y)}(\overline U)
\simeq\Mod_{Lf_Z}(\widehat{\cat C}_Y),
\]
the tensor unit corresponds $Lf_Z$. The free-forgetful adjunction and the completion adjunction therefore give natural equivalences
\begin{align*}
\mathcal O^{\unit}_{\Spf(\cat K,Y)}(\overline U)
&\simeq\Hom_{\widehat{\cat C}_Y}(L\unit,Lf_Z)\\
&\simeq\Hom_{\cat C}(\unit,\widehat{(f_Z)}_Y).
\end{align*}
These equivalences respect the commutative algebra structures and restriction maps: on the module-category side, restriction is extension of scalars along the canonical maps between the algebras $Lf_Z$. Since $f_Z\simeq\unit_U$, applying $\pi_*$ identifies the resulting presheaf on basic opens with ${}_{\mathrm p}\mathcal O_{\cat K,Y}$ from \Cref{def:formal-balmer}. Sheafification proves the claimed isomorphism.
\end{proof}
\section{The formal Hopkins--Neeman theorem}\label{sec:the_formal_hopkins_neeman_theorem}
Let $R$ be a noetherian ring and let $I \subseteq R$ be an ideal. In \Cref{exa:derived-complete-ring}, we introduced the derived category of $I$-complete $R$-modules, $\widehat{\Der(R)}_I$, as the completion of $\Der(R)$ along the closed subset $V(I) \subseteq \Spc(\Der(R)^c)$. In this section, we compute the formal spectrum $\Spf(\Der(R)^c,V(I))$ and identify it with the classical affine formal scheme $\Spf(R^{\wedge}_I)$.

\begin{Rem}\label{rem:completion-sequence}
We first recall some properties of the derived $I$-completion functor $\widehat{(-)}_I \colon \Der(R) \to \widehat{\Der(R)}_{I}$ from \Cref{exa:derived-complete-ring}. For a complex $M \in \Der(R)$, the object $\widehat{M}_I$ is the \emph{derived} $I$-adic completion of $M$. Even if $M$ is a discrete module, $\widehat{M}_I$ may differ from the classical $I$-adic completion. In general, \cite[Proposition~1.1]{GreenleesMay92} provides a short exact sequence for any discrete module $M$:
\[
    0 \to {\varprojlim_{k}}^{1} \Tor^R_{i+1}(R/I^k,M) \to \pi_i\widehat{M}_I \to \varprojlim_{k} \Tor^R_{i}(R/I^k,M)\to 0.
\]
This implies the following useful lemma:
\end{Rem}

\begin{Lem}\label{lem:completion-is-a-completion}
If $M$ is a flat discrete $R$-module, then the natural map $\widehat{M}_I \to M^{\wedge}_I$ is an isomorphism, where $M^{\wedge}_I$ denotes the ordinary $I$-adic completion.
\end{Lem}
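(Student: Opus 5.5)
We apply the Greenlees--May short exact sequence of \Cref{rem:completion-sequence} directly to $M$. Since $M$ is flat, $\Tor^R_i(R/I^k,M)=0$ for all $i>0$ and all $k\geq 1$, and $\Tor^R_0(R/I^k,M)\cong M/I^kM$.

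For $i\geq 1$, both outer terms of the sequence vanish. The left term is $\lim^1_k$ of $\Tor_{i+1}$, which is zero, and the right term is $\lim_k$ of $\Tor_i$, which is also zero. Hence $\pi_i\widehat M_I=0$ for $i\geq 1$.

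For $i\leq -1$, the Tor groups in negative degrees vanish by definition. So both outer terms are zero, except possibly when $i=-1$. In that case the left term is $\lim^1_k\Tor_0(R/I^k,M)=\lim^1_k M/I^kM$. The transition maps $M/I^{k+1}M\to M/I^kM$ are surjective, so this tower is Mittag-Leffler and its $\lim^1$ vanishes. Thus $\pi_i\widehat M_I=0$ for all $i\leq -1$ as well.

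For $i=0$, the left term is $\lim^1_k\Tor_1(R/I^k,M)=0$. The sequence therefore gives an isomorphism $\pi_0\widehat M_I\cong\varprojlim_k M/I^kM=M^\wedge_I$. So $\widehat M_I$ is discrete, with $\pi_0$ equal to the classical completion.

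It remains to check that this abstract isomorphism is induced by the natural map $\widehat M_I\to M^\wedge_I$. This step requires the most care, since one has to recall how the Greenlees--May sequence is built. It comes from writing $\widehat M_I$ as the homotopy limit $\operatorname{holim}_k (K_k\otimes M)$ of Koszul-type complexes, or equivalently of $R/I^k\otimes^{\mathbf L}M$ up to pro-isomorphism. The surjection onto $\lim_k\Tor_0$ is the map induced by $R/I^k\otimes^{\mathbf L}M\to R/I^k\otimes M=M/I^kM$. On $\pi_0$ this is exactly the natural comparison map to $M^\wedge_I$.

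Once this identification is made, the natural map is a map between discrete objects that induces an isomorphism on $\pi_*$. It is therefore an isomorphism in $\Der(R)$.
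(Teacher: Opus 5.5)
Your proof is correct and matches the paper's approach: the paper deduces the lemma directly from the Greenlees--May short exact sequence of \Cref{rem:completion-sequence}, using flatness to kill the higher $\Tor$ terms, exactly as you do. Your extra care with the degree $-1$ term $\lim^1_k M/I^kM$ (via Mittag-Leffler) and with checking that the resulting isomorphism is the natural map fills in details the paper leaves implicit.
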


Our version of the Hopkins--Neeman theorem is the following. The calculation in the proof shows that the Dirac structure sheaves appearing here are concentrated in degree zero; we therefore state the result in terms of ordinary locally ringed spaces.

\begin{Thm}\label{thm:formal-hn}
Let $R$ be a commutative noetherian ring and let $I \subseteq R$ be an ideal. Then there is an isomorphism of locally ringed spaces (and hence, formal schemes)
\[
\rho \colon \Spf\bigl(\Der(R)^c,V(I)\bigr) \xrightarrow{\sim} \Spf(R,I)
\]
induced by the comparison map of \Cref{thm:comparison-map}. Consequently, we have an isomorphism
\[
  \Spf\bigl(\widehat{\Der(R)}_I^{\dual}\bigr) \xrightarrow{\sim} \Spf(R^{\wedge}_I).
\]
\end{Thm}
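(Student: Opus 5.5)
We will match the two Dirac ringed spaces basic open by basic open, using the Hopkins--Neeman identification on underlying spaces. By \Cref{exa:balmer-scheme} and \Cref{thm:comparison-map}, the comparison map $\rho\colon\Spc(\Der(R)^c)\to\Spec(R)$ is a homeomorphism. Under it, $\supp(a)$ corresponds to the homological support of $a$. Since $\supp(\mathrm{Kos}(I))=V(I)$ for a Koszul complex on generators of $I$, the Thomason subset $V(I)$ corresponds to $V(I)\subseteq\Spec(R)$. Hence $\rho$ restricts to a homeomorphism $V(I)\cong|\Spf(R,I)|$. For $s\in R$, the cone $\cone(s)=R/\!\!/s$ has support $V(s)$, so $U(\cone(s))$ corresponds to $D(s)$. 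The opens $\overline U(\cone(s))$ therefore correspond to the basic opens $\overline D(s)$ and form a basis of $V(I)$. Since both structure sheaves are sheafifications of presheaves on these bases, it is enough to produce isomorphisms on the bases, compatible with restriction maps.

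Next we compute the sections. For $c=\cone(s)$, the localization $\Der(R)(U(c))$ is $\Der(R[1/s])$ and $\unit[c^{-1}]\simeq R[1/s]$. By \Cref{exa:derived-complete-ring}, $\widehat{(-)}_Y$ is derived $I$-completion, so
\[
{}_{\mathrm p}\mathcal O\bigl(\overline U(c)\bigr)\cong\pi_*\widehat{(R[1/s])}_I .
\]
The module $R[1/s]$ is flat over $R$, so \Cref{lem:completion-is-a-completion} identifies $\widehat{(R[1/s])}_I$ with the classical completion $R[1/s]^{\wedge}_I$, concentrated in degree $0$. Consequently the presheaf is concentrated in degree $0$, and its values agree with those defining $\mathcal O_{\Spf(R,I)}$ on $\overline D(s)$. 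The restriction maps on both sides are induced by the localization maps $R[1/s]\to R[1/st]$ followed by completion, so the isomorphisms are natural. One caveat: a general basic open $\overline U(c)$ with $c$ arbitrary is a finite intersection of opens of the form $\overline U(\cone(s_i))$. Since $U(c\oplus c')=U(c)\cap U(c')$, we may either restrict to the cofinal sub-basis given by cones of elements $\cone(s)$ or use $\overline D(s_1\cdots s_n)$. The sheafified sheaf is determined by any basis.

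It remains to check that this isomorphism is the one induced by $\rho$, that is, by the morphism of \Cref{thm:comparison-map} together with the functoriality of completion. I would construct the map on sections as follows. Start from $R=\pi_0\End(\unit)$, invert $s$, and map to $\pi_0\Hom(\unit,\unit[c^{-1}])$. Then compose with the completion unit to land in $\pi_0\Hom(\unit,\widehat{\unit[c^{-1}]}_I)$. Finally, pass to the $I$-adic completion on the source, which is possible because the target is $I$-adically complete. This gives a morphism of locally ringed spaces, since the stalks of the target are local rings of a formal scheme. The final statement then follows by combining with \Cref{thm:completion-theorem} and \Cref{rem:adic-completion}.

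The main obstacle is the identification $\unit[\cone(s)^{-1}]\simeq R[1/s]$ and the compatibility of the whole construction with restriction maps. This needs care when $\overline U(c)$ is presented by a non-cone object $c$, and requires the independence-of-representatives statement in \Cref{lem:presheaf-well-defined}.
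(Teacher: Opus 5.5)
Your proposal is correct and follows essentially the same route as the paper. Both arguments use the Hopkins--Neeman homeomorphism via Balmer's comparison map, pull back the basis $\overline D(s)$ to $\overline U(\cone(s))$, and identify $\unit[\cone(s)^{-1}]$ with $R[1/s]$ via $\Der(R[1/s])$. Both then use flatness and \Cref{lem:completion-is-a-completion} to get $R[1/s]^{\wedge}_I$ in degree zero, sheafify, and conclude with \Cref{thm:completion-theorem} and \Cref{rem:adic-completion}.

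One harmless slip: a general $U(c)$ is a finite \emph{union}, not intersection, of opens $U(\cone(s_i))$. This does not affect the argument, since all you actually use is that the $\overline U(\cone(s))$ form a basis of $V(I)$.
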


\begin{proof}
By \Cref{rem:adic-completion} and \Cref{thm:completion-theorem}, it suffices to establish the isomorphism
\[
\rho \colon \Spf\bigl(\Der(R)^c,V(I)\bigr) \xrightarrow{\sim} \Spf(R,I).
\]
In this case, the stable homotopy theory associated to $\Der(R)^c$ is $\Der(R)$.

On the level of topological spaces, the result follows from \Cref{exa:balmer-scheme}: the isomorphism $\Spc(\Perf(R)) \cong \Spec(R)$ identifies the tensor-triangular support with the classical support of $R$-modules, mapping $V(I)$ homeomorphically to itself.

It remains to identify the structure sheaves. We compare the defining presheaves on the basis of open sets $U = \overline{D}(s) = D(s) \cap V(I)$ and then sheafify. Recall from \cite[Theorem~5.3]{Balmer10b} that under $\rho$, the preimage of $D(s)$ is the open set $U(\cone(s))$. Thus,
\[
\rho^{-1}\bigl(\overline{D}(s)\bigr) = U(\cone(s)) \cap V(I) = \overline{U}(\cone(s)).
\]
The corresponding value of the structure sheaf of $\Spf(R,I)$ is
\[
\mathcal O_{\Spf(R,I)}\bigl(\overline{D}(s)\bigr) \cong R[1/s]^{\wedge}_I.
\]
On the tensor-triangular side, since $R$ is the tensor unit of $\Der(R)$, the defining presheaf has value
\[
\begin{split}
{}_{\mathrm p}\mathcal O_{\Der(R)^c,V(I)}\bigl(\overline{U}(\cone(s))\bigr)
&= \pi_*\Hom_{\Der(R)}\bigl(R,\widehat{(\unit[\cone(s)^{-1}])}_I\bigr)\\
&\cong \pi_*\bigl(\widehat{(\unit[\cone(s)^{-1}])}_I\bigr).
\end{split}
\]
Here, $\unit[\cone(s)^{-1}]$ denotes the tensor unit in the localization of $\Der(R)$ at the open set $U(\cone(s))$. This localization is the Verdier quotient by the localizing tensor ideal $\Loc\langle\cone(s)\rangle$. Equivalently, extension of scalars along $R\to R[1/s]$ identifies it with $\Der(R[1/s])$; compare \cite[Theorem~3.3.7]{HoveyPalmieriStrickland97}. Under this equivalence, the localized unit $\unit[\cone(s)^{-1}]$ corresponds to the ring $R[1/s]$. Therefore, we have an equivalence of completions
\[
\widehat{(\unit[\cone(s)^{-1}])}_I \simeq \widehat{R[1/s]}_I.
\]
Since $R[1/s]$ is flat over $R$, \Cref{lem:completion-is-a-completion} gives an equivalence in $\Der(R)$
\[
\widehat{R[1/s]}_I\simeq R[1/s]^\wedge_I,
\]
where the right-hand side is regarded as a complex concentrated in degree zero. Consequently, there is a natural isomorphism of Dirac rings
\[
{}_{\mathrm p}\mathcal O_{\Der(R)^c,V(I)}\bigl(\overline U(\cone(s))\bigr)\cong R[1/s]^\wedge_I\cong\mathcal O_{\Spf(R,I)}\bigl(\overline D(s)\bigr).
\]
These isomorphisms are compatible with restriction and identify the morphism of presheaves induced by Balmer's comparison map with an objectwise isomorphism. After sheafification, they therefore give an isomorphism of structure sheaves. They also show that the Dirac structure sheaf of $\Spf(\Der(R)^c,V(I))$ is concentrated in degree zero. Thus $\rho$ is an isomorphism of ordinary locally ringed spaces. Since $\Spf(R,I)$ is an affine formal scheme, the source is an affine formal scheme as well, and $\rho$ is an isomorphism of formal schemes.
\end{proof}
\begin{Rem}
This result will also follow from results in \Cref{sec:a_comparison_map}, in particular \Cref{prop:comparison-map}. However we believe it is still instructive to do this special case.
\end{Rem}
\section{Globalizing to schemes}\label{sec:thomason}
We can now globalize the formal Hopkins--Neeman theorem (\Cref{thm:formal-hn}) to arbitrary noetherian schemes. We begin by recalling the notion of completion in the setting of schemes.

\begin{Def}
Let $X$ be a noetherian scheme, and let $Y \subseteq |X|$ be a closed subset. An \emph{ideal of definition} for $Y$ is a quasi-coherent ideal sheaf $\mathcal J \subseteq \mathcal{O}_X$ of finite type such that $|V(\mathcal J)| = Y$.
\end{Def}

\begin{Def}
Let $X$ be a quasi-compact quasi-separated noetherian scheme, and $Y \subseteq |X|$ a closed subset with ideal of definition $\mathcal J$.  For $n \ge 0$, set
\[
X_n \coloneqq (V(\mathcal{J}^{n+1}), \ \mathcal{O}_X / \mathcal{J}^{n+1}).
\]
The colimit in the category of locally ringed spaces
\[
\widehat{X}_Y \coloneqq \varinjlim_n X_n
\]
is called the \emph{formal completion} of $X$ along $Y$.
\end{Def}

\begin{Rem}
Explicitly, $\widehat{X}_Y$ is the locally ringed space whose underlying topological space is $Y$, equipped with the structure sheaf
\[
\mathcal{O}_{\widehat{X}_Y} \coloneqq \varprojlim_{n} \mathcal{O}_X / \mathcal{J}^{n+1}.
\]
By \cite[Proposition~10.8.5]{GrothendieckDieudonne1971Elements}, $\widehat{X}_Y$ is a formal scheme.
\end{Rem}
\begin{Exa}
Let $X = \Spec(R)$ and let $Y \subseteq |\Spec(R)|$ be a closed subset. Then there exists an ideal $I \subseteq R$ such that $V(I) = Y$, and one has natural isomorphisms
\[
\widehat{X}_Y \cong \Spf(R,I) \cong \Spf(R^{\wedge}_I).
\]
\end{Exa}

\begin{Def}\label{def:algebraizable}
A formal scheme is \emph{algebraizable} if it is of the form $\widehat{X}_Y$ for some noetherian scheme $X$ and a closed subset $Y \subseteq |X|$.
\end{Def}

\begin{Rem}
In the rest of this section, we will restrict our attention to algebraizable formal schemes. Even in the noetherian case, there exist non-algebraizable formal schemes, although they are difficult to construct; see \cite[Section 5]{HironakaMatsumura1968Formal}.
\end{Rem}
For the following result, the affine calculation of \Cref{thm:formal-hn} shows that the relevant Dirac structure sheaves are concentrated in degree zero. We may therefore regard the Dirac ringed spaces appearing below as ordinary ringed spaces.
\begin{Thm}\label{thm:global-formal-hn}
Let $X$ be a noetherian scheme, and let $Z \subseteq X$ be a closed subset. Let $\widehat{X}_Z$ denote the formal completion of $X$ along $Z$. Then there is an isomorphism of locally ringed spaces (and hence formal schemes)
\[
\Phi \colon \Spf\bigl(\Der_{\qc}(X)^c, Z\bigr) \xrightarrow{\sim} \widehat{X}_Z.
\]
\end{Thm}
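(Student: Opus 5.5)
We glue the affine computation of \Cref{thm:formal-hn} along a finite affine cover, using \Cref{Thm:spf-open-restriction} to localize the formal spectrum. First fix the topology. By \Cref{exa:balmer-scheme} there is a homeomorphism $f\colon X\xrightarrow{\sim}\Spc(\Der_{\qc}(X)^c)$ under which homological support corresponds to tt-support. Since $X$ is noetherian, the closed set $Z$ is Thomason, so $\Spf(\Der_{\qc}(X)^c,Z)$ makes sense. Its underlying space $f(Z)$ is homeomorphic to $Z=|\widehat{X}_Z|$; let $\Phi$ be this homeomorphism on underlying spaces.

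Now choose a finite cover $X=\bigcup_{i=1}^m U_i$ by affine opens $U_i=\Spec(R_i)$, and put $I_i\subseteq R_i$ for the ideal cutting out $Z\cap U_i$ (for instance the ideal associated to $\mathcal J|_{U_i}$ for a fixed ideal of definition $\mathcal J$). Each $f(U_i)$ is quasi-compact open. Thomason's localization theorem, as packaged in \Cref{rem:verdier-quotients-for-small-and-large-categories} together with \Cref{exa:balmer-scheme}, gives a symmetric monoidal equivalence $\Der_{\qc}(X)^c(U_i)\simeq\Der(R_i)^c$, and this equivalence is compatible with restriction of complexes. Applying \Cref{Thm:spf-open-restriction} and then \Cref{thm:formal-hn} yields isomorphisms
\[
\Spf(\Der_{\qc}(X)^c,Z)\big|_{Z\cap U_i}\cong\Spf(\Der(R_i)^c,V(I_i))\xrightarrow[\rho_i]{\sim}\Spf(R_i,I_i)\cong\widehat{X}_Z\big|_{Z\cap U_i}.
\]
The last isomorphism is the standard identification of the formal completion over an affine open. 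These local isomorphisms lie over the restrictions of $\Phi$. In particular the structure sheaf is concentrated in degree zero, so we may work with locally ringed spaces throughout.

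The main obstacle is showing that the local isomorphisms agree on overlaps $U_i\cap U_j$; once they do, they glue to a global isomorphism of sheaves $\Phi^\#$. Overlaps need not be affine, so we refine and check the agreement on a basis: it suffices to compare the two maps on basic opens $\overline D(s)\subseteq Z\cap U_i\cap U_j$ with $D(s)$ a principal affine open of both $U_i$ and $U_j$. For such an open, \Cref{lem:open-restriction-transitivity} identifies both $\Der(R_i)^c(D(s))$ and $\Der(R_j)^c(D(s))$ canonically with $\Der_{\qc}(X)^c(D(s))\simeq\Der(R_i[1/s])^c$. Under this identification both sections of the formal spectrum become $\pi_0\widehat{\mathcal O_X(D(s))}_I$, the completed ring of the affine $D(s)$. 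We must then check that both $\rho_i$ and $\rho_j$ restrict to the canonical map into $\mathcal O_{\widehat{X}_Z}(\overline D(s))$. This is where naturality of Balmer's comparison map (\Cref{thm:comparison-map}) under the localization functors enters: $\rho$ is natural in tt-functors, and \Cref{lem:completion-is-a-completion} is compatible with flat base change, so both composites equal the map induced by the identity of $\mathcal O_X(D(s))$ followed by completion. If a direct naturality argument for $\rho$ is awkward, we fall back on the explicit description: the maps are determined on $\pi_0$ by the unit-endomorphism rings, and they agree there because everything is induced by the ring maps $\mathcal O_X(U_i)\to\mathcal O_X(D(s))$.

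With agreement on overlaps established, the local isomorphisms glue to an isomorphism of sheaves of rings covering the homeomorphism $\Phi$. Being locally isomorphic to $\widehat{X}_Z$, the source is a locally ringed space and in fact a noetherian formal scheme, and $\Phi$ is an isomorphism of formal schemes.
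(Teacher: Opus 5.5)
Your proposal is correct and follows the paper's proof essentially step for step. You fix the homeomorphism, take a finite affine cover, use the locality theorem plus Thomason--Trobaugh localization to reduce to the affine formal Hopkins--Neeman theorem, and check agreement on overlaps on a basis via transitivity of open restrictions and naturality of Balmer's comparison map; the only cosmetic difference is that you test on opens simultaneously principal in both charts, whereas the paper defines a canonical $\rho_W$ for every affine open $W$ and shows each local isomorphism restricts to it.
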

\begin{proof}
Choose an ideal of definition $\mathcal J\subseteq\mathcal O_X$ for $Z$. Let $\{U_{\alpha}\}$ be a finite affine open cover of $X$, where $U_{\alpha}=\Spec(R_{\alpha})$. Since $U_{\alpha}$ is affine, it is a quasi-compact open subset of $X$. Let $Z_{\alpha}\coloneqq Z\cap U_{\alpha}$ and let $I_{\alpha}\coloneqq\Gamma(U_\alpha,\mathcal J|_{U_\alpha})\subseteq R_\alpha$, so that $Z_{\alpha}=V(I_{\alpha})$.

By the definition of the formal completion of a scheme, $\widehat{X}_Z$ is obtained by gluing the affine formal schemes
\[
(\widehat{X}_Z)|_{Z_{\alpha}}\cong\Spf(R_{\alpha},I_{\alpha}).
\]
On the tensor-triangular side, we identify $\Spc(\Der_{\qc}(X)^c)\cong X$ as topological spaces (\Cref{exa:balmer-scheme}). Since $U_{\alpha}$ is a quasi-compact open set, we may apply the locality theorem (\Cref{Thm:spf-open-restriction}) to deduce
\[
\Spf\bigl(\Der_{\qc}(X)^c,Z\bigr)\big|_{Z_{\alpha}}\cong\Spf\bigl(\Der_{\qc}(X)^c(U_{\alpha}),Z_{\alpha}\bigr).
\]
Recall from \Cref{rem:verdier-quotient} that $\Der_{\qc}(X)^c(U_{\alpha})$ denotes the Karoubi quotient. By the Thomason--Trobaugh localization theorem \cite[Theorem~5.2.2]{ThomasonTrobaugh90} (see also \cite[Theorem~2.13]{Balmer02}), there is an equivalence
\[
\Der_{\qc}(X)^c(U_{\alpha})\simeq\Der_{\qc}(U_{\alpha})^c\simeq\Der(R_{\alpha})^c.
\]
Thus, we have an isomorphism of Dirac ringed spaces
\[
\Spf\bigl(\Der_{\qc}(X)^c,Z\bigr)\big|_{Z_{\alpha}}\cong\Spf\bigl(\Der(R_{\alpha})^c,V(I_{\alpha})\bigr).
\]
Now we apply the affine formal Hopkins--Neeman theorem (\Cref{thm:formal-hn}). This provides an isomorphism of ringed spaces
\[
\rho_{\alpha}\colon\Spf\bigl(\Der(R_{\alpha})^c,V(I_{\alpha})\bigr)\xrightarrow{\sim}\Spf(R_{\alpha},I_{\alpha}).
\]
Combining these identifications, we obtain local isomorphisms
\[
\phi_{\alpha}\colon\Spf\bigl(\Der_{\qc}(X)^c,Z\bigr)\big|_{Z_{\alpha}}\xrightarrow{\sim}(\widehat{X}_Z)|_{Z_{\alpha}}.
\]
We verify that these local isomorphisms agree on overlaps by restricting to an affine basis. Let $W\coloneqq\Spec(R_W)\subseteq X$ be any affine open and set $I_W\coloneqq\Gamma(W,\mathcal J|_W)$, so that $Z\cap W=V(I_W)$. The locality theorem, Thomason--Trobaugh localization, and the affine formal Hopkins--Neeman theorem give a canonical composite
\[
\begin{split}
\rho_W\colon
\Spf\bigl(\Der_{\qc}(X)^c,Z\bigr)\big|_{Z\cap W}
&\xrightarrow{\sim}
\Spf\bigl(\Der(R_W)^c,V(I_W)\bigr)\\
&\xrightarrow{\sim}
\Spf(R_W,I_W)
\xrightarrow{\sim}
(\widehat X_Z)|_{Z\cap W}.
\end{split}
\]
This map depends only on $W$. Indeed, Balmer's comparison map is natural with respect to localization \cite[Theorem~5.3]{Balmer10b}, the open-restriction identifications are transitive by \Cref{lem:open-restriction-transitivity}, the Thomason--Trobaugh equivalences are induced by restriction, and the ideals $I_W$ are obtained by restricting the fixed ideal sheaf $\mathcal J$. Consequently, if $W\subseteq U_\alpha$ is affine, then
\[
\phi_\alpha|_{Z\cap W}=\rho_W.
\]
Now let $U_{\alpha\beta}=U_\alpha\cap U_\beta$. For every affine open $W\subseteq U_{\alpha\beta}$, both $\phi_\alpha$ and $\phi_\beta$ restrict on $Z\cap W$ to $\rho_W$. Since affine opens form a basis and a morphism of sheaves is determined on a basis, the restrictions of $\phi_\alpha$ and $\phi_\beta$ to $Z\cap U_{\alpha\beta}$ agree. As the $\phi_\alpha$ are isomorphisms onto the restrictions of the fixed Dirac ringed space $\widehat X_Z$ and agree on overlaps, they glue to an isomorphism of Dirac ringed spaces
\[
\Phi\colon\Spf\bigl(\Der_{\qc}(X)^c,Z\bigr)\xrightarrow{\sim}\widehat X_Z.
\]
The affine formal Hopkins--Neeman theorem also shows that the restriction of the structure sheaf of $\Spf(\Der_{\qc}(X)^c,Z)$ to every $Z_\alpha$ is concentrated in degree zero. Since the $Z_\alpha$ cover $Z$, the entire structure sheaf is concentrated in degree zero. Thus $\Phi$ is an isomorphism of ordinary locally ringed spaces. Since $\widehat X_Z$ is a formal scheme, the source is a formal scheme as well, and $\Phi$ is an isomorphism of formal schemes.
\end{proof}

\begin{Rem}
One can ask for extensions beyond schemes (e.g., to certain algebraic stacks or other locally affine tensor-triangular situations). Conceptually, the argument above only uses (i) an affine identification (a formal Hopkins--Neeman statement), (ii) a locality statement for $\Spf$ under restriction to quasi-compact opens, via \Cref{Thm:spf-open-restriction} and (iii) a Thomason--Trobaugh type localization theorem identifying the relevant Verdier quotients with the corresponding ``affine'' categories on opens. Any setting in which analogues of (i)--(iii) hold admits the same gluing argument.
\end{Rem}
\begin{Def}\label{def:Dqc-complete}
Let $\widehat{X}_Z$ denote the formal completion of $X$ along $Z$ as above. We set
\[
  \Der^{\wedge}_{\qc}(\widehat{X}_Z)\coloneqq \widehat{\Der_{\qc}(X)}_Z,
\]
the completion of $\Der_{\qc}(X)$ at the Thomason subset $Z\subseteq \Spc(\Der_{\qc}(X)^c)\cong X$ in the sense of \Cref{def:completion}.
\end{Def}

\begin{Cor}\label{cor:global-formal-hn}
Let $X$ be a noetherian scheme and let $Z\subseteq X$ be a closed subset. Let $\widehat{X}_Z$ denote the formal completion of $X$ along $Z$. Then there is an isomorphism of ringed spaces (and hence formal schemes)
\[
  \Phi\colon \Spf\bigl(\Der^{\wedge}_{\qc}(\widehat{X}_Z)^{\dual}\bigr)\xrightarrow{\sim}\widehat{X}_Z.
\]
\end{Cor}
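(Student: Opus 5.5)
The corollary should follow by composing two isomorphisms already established: the completion theorem (\Cref{thm:completion-theorem}) and the global formal Thomason theorem (\Cref{thm:global-formal-hn}). Concretely, set $\cat K \coloneqq \Der_{\qc}(X)^c$. Since $X$ is noetherian (hence quasi-compact and quasi-separated), $\Der_{\qc}(X)$ is rigidly-compactly generated. Its compact objects are the perfect complexes, which coincide with the dualizable ones, so $\Ind(\cat K)\simeq\Der_{\qc}(X)$ by \Cref{Rem:ind-completion}.

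Under the identification $\Spc(\cat K)\cong X$ of \Cref{exa:balmer-scheme}, the closed subset $Z$ corresponds to a Thomason subset, because $X$ is noetherian and every closed subset of a noetherian spectral space is Thomason. By \Cref{def:Dqc-complete}, $\Der^{\wedge}_{\qc}(\widehat{X}_Z)=\widehat{\cat C}_Z$ with $\cat C=\Ind(\cat K)$. Its dualizable objects form a $2$-ring, and by the convention following \Cref{thm:completion-theorem}, $\Spf(\Der^{\wedge}_{\qc}(\widehat{X}_Z)^{\dual})$ means $\Spf(\widehat{\cat C}^{\dual}_Z,\varphi^{-1}(Z))$, where $\varphi=\Spc(\widehat{(-)}_Z|_{\cat K})$.

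The steps are then as follows.
\begin{enumerate}
\item Apply \Cref{thm:completion-theorem} to obtain an isomorphism of Dirac ringed spaces $(\varphi,\varphi^\#)\colon\Spf(\widehat{\cat C}^{\dual}_Z,\varphi^{-1}(Z))\to\Spf(\cat K,Z)$.
\item Apply \Cref{thm:global-formal-hn} to obtain $\Phi_0\colon\Spf(\cat K,Z)\xrightarrow{\sim}\widehat X_Z$.
\item Take $\Phi\coloneqq\Phi_0\circ(\varphi,\varphi^\#)$. This is an isomorphism of Dirac ringed spaces.
\item Since the target $\widehat X_Z$ has its structure sheaf in degree zero, so does the source. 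Hence $\Phi$ is an isomorphism of ordinary ringed spaces. An isomorphism of ringed spaces onto a locally ringed space is automatically an isomorphism of locally ringed spaces, because stalk maps are isomorphisms and therefore local. So the source is a formal scheme and $\Phi$ is an isomorphism of formal schemes.
\end{enumerate}

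The only point needing care is bookkeeping: checking that the hypotheses of \Cref{thm:completion-theorem} hold, namely that $Z$ is Thomason in $\Spc(\cat K)$ and that the completion in \Cref{def:Dqc-complete} is formed in $\Ind(\cat K)$. I expect no genuine obstacle; the heavy lifting is already in the two cited theorems.
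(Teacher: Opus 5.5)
Your proposal is correct and follows the paper's proof: it composes the isomorphism $\Spf(\widehat{\cat C}^{\dual}_Z,\varphi^{-1}(Z))\cong\Spf(\Der_{\qc}(X)^c,Z)$ from \Cref{thm:completion-theorem} with the isomorphism of \Cref{thm:global-formal-hn}. Your extra bookkeeping is sound and only makes explicit what the paper leaves implicit. That bookkeeping covers $Z$ being Thomason, $\Ind(\Der_{\qc}(X)^c)\simeq\Der_{\qc}(X)$, and the passage from Dirac to locally ringed spaces.
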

\begin{proof}
By \Cref{thm:completion-theorem}, the source is isomorphic to $\Spf(\Der_{\qc}(X)^c,Z)$, which is isomorphic to $\widehat X_Z$ by \Cref{thm:global-formal-hn}.
\end{proof}
\begin{Rem}\label{rem:completion-as-local-homology}
Recall from \Cref{rem:completion-formula} that completion at a Thomason subset $Z\subseteq \Spc(\Der_{\qc}(X)^c)\cong X$ is given by the endofunctor
\[
  \widehat{(-)}_Z\simeq \ihom{e_Z,-},
\]
where $e_Z$ is the Balmer--Favi idempotent associated to $Z$.

When $X$ is separated, one can identify this idempotent with a familiar geometric object: there is an equivalence
\[
  e_Z\simeq \mathbb{R}\Gamma_Z(\mathcal{O}_X)
\]
in $\Der_{\qc}(X)$, where $\mathbb{R}\Gamma_Z$ denotes the right derived functor of $\Gamma_Z$, the subsheaf with supports in $Z$, see \cite[Lemma~5.4]{Stevenson2018Filtrations}. In particular, in this case the abstract completion functor $\ihom{e_Z,-}$ agrees with the derived completion (local homology) functor of Alonso Tarr\'io--Jerem\'ias L\'opez--Lipman: by \cite[Theorem~(0.3)]{AlonsoJeremiasLipman97}, the right adjoint to $\mathbb{R}\Gamma_Z$ is naturally isomorphic to $\mathbb{L}\Lambda_Z$, and under the identification $e_Z\simeq \mathbb{R}\Gamma_Z(\mathcal{O}_X)$ this yields a natural equivalence of endofunctors on $\Der_{\qc}(X)$
\[
  \widehat{(-)}_Z\simeq \ihom{\mathbb{R}\Gamma_Z(\mathcal{O}_X),-}\simeq \mathbb{L}\Lambda_Z(-).
\]
\end{Rem}

\begin{Rem}\label{rem:AJL-comparison-sketch}
We briefly indicate how \Cref{def:Dqc-complete} compares with the triangulated category denoted $\Der^{\wedge}_{\qc}(\widehat{X}_Z)$ in \cite[\S6.3]{alonso1999duality}, at least when $X$ is separated. This comparison is not used elsewhere in the paper.

Write $\widetilde{\Der}^{\wedge}_{\qc}(\widehat{X}_Z)$ for the category constructed in \cite[Remark 6.3.1]{alonso1999duality}. Alonso--Jerem\'ias--Lipman show that on the formal scheme $\widehat{X}_Z$ the ``complete'' and ``torsion'' variants agree, and identify the torsion category on $\widehat{X}_Z$ with the $Z$-supported subcategory on $X$:
\[
  \widetilde{\Der}^{\wedge}_{\qc}(\widehat{X}_Z)
  \simeq \Der_{\qct}(\widehat{X}_Z)
  \simeq \Der_{\qc Z}(X),
\]
where $\Der_{\qc Z}(X)$ denotes the essential image of $\mathbb{R}\Gamma_Z$ on $\Der_{\qc}(X)$. (For precise statements and hypotheses, see \cite[\S6.3]{alonso1999duality} and \cite[Proposition~5.2.4]{alonso1999duality}.)

On the other hand, \Cref{rem:completion-as-local-homology} identifies our completion functor $\widehat{(-)}_Z$ with $\mathbb{L}\Lambda_Z$ (when $X$ is separated). A form of Greenlees--May duality identifies the torsion and complete pieces as equivalent subcategories of $\Der_{\qc}(X)$, so the essential image of $\mathbb{L}\Lambda_Z$ agrees with the completion subcategory $\widehat{\Der_{\qc}(X)}_Z$ of \Cref{def:Dqc-complete}. Under these identifications, one obtains an equivalence
\[
  \widetilde{\Der}^{\wedge}_{\qc}(\widehat{X}_Z)\simeq \widehat{\Der_{\qc}(X)}_Z.
\]
In fact, the formulas in \cite[Proposition 5.2.4]{alonso1999duality} imply that this equivalence is given by $\mathbb{L}\Lambda_Z \circ k_*$ where $k \colon \widehat{X}_Z \to X$ is the completion map.
\end{Rem}
\section{A comparison map}\label{sec:a_comparison_map}

One of the most important tools in computing the Balmer spectrum of a tensor-triangulated category is Balmer's comparison map (\Cref{thm:comparison-map}). A general strategy for computing $\Spc(\cat K)$ is to first determine $\Spec^h(\pi_*\Hom_{\cat K}(\unit,\unit))$ and then compute the fibers of the comparison map; see \cite{BalmerSanders17} and \cite{abhs} for examples. It is therefore natural to seek an analogue of the comparison map for the formal spectrum introduced in this paper. Throughout this section we assume $R_* \coloneqq \pi_*\Hom_{\cat K}(\unit,\unit)$ is noetherian, which implies that $\rho$ is surjective \cite[Theorem~7.3]{Balmer10b}. In this section we will construct a comparison map for `algebraic' completions, i.e., those coming from an ideal in $R_*$.
\begin{Def}
Let $\cat K$ be a 2-ring and let $I \subseteq R_*$ be a homogeneous ideal. Write $Y(I) \coloneqq \rho^{-1}(V(I))$ for the preimage of the closed subset $V(I) \subseteq \Spec^h(R_*)$ under the comparison map of \Cref{thm:comparison-map}. Define
\[
  \widehat{ \cat C}_I \coloneqq \widehat{\cat C}_{Y(I)}
\]
to be the completion of $\cat C$ at the closed subset $Y(I) \subseteq \Spc(\cat K)$ in the sense of \Cref{def:completion}.
\end{Def}

\begin{Exa}
If $\cat K = \Perf(R)$ for a commutative noetherian ring $R$, then $\rho$ is a homeomorphism, and we are exactly in the situation of \Cref{exa:derived-complete-ring}.
\end{Exa}

\begin{Rem}
The spectrum $R \coloneqq \Hom_{\cat C}(\unit,\unit)$ is a commutative algebra in spectra and, for any $X\in\cat C$, $\Hom_{\cat C}(\unit,X)$ is an $R$-module. By Morita theory there is a symmetric monoidal adjunction
% https://q.uiver.app/#q=WzAsMixbMCwwLCJGIFxcY29sb24gXFxNb2RfUiJdLFsxLDAsIlxcY2F0IEMgXFxub2xvYyBHIl0sWzAsMV0sWzIsMCwiIiwwLHsib2Zmc2V0IjoxLCJsZXZlbCI6MX1dXQ==
\begin{equation}\begin{tikzcd}[cramped]\label{eq:morita}
    {F \colon \Mod_R} & {\cat C \noloc G}
    \arrow[shift left, from=1-1, to=1-2]
    \arrow[shift left, from=1-2, to=1-1]
\end{tikzcd}\end{equation}
with $F(M)=M\otimes_R\unit$ and $G(X)=\Hom_{\cat C}(\unit,X)$. Since $\cat C=\Ind(\cat K)$ is rigidly-compactly generated and $\unit$ is compact, $F$ is fully faithful and $G$ exhibits $\Mod_R$ as a colocalization of $\cat C$.

Let $\rho_{\cat K}\colon\Spc(\cat K)\to\Spec^h(R_*)$ and $\rho_R\colon\Spc(\Mod_R^c)\to\Spec^h(R_*)$ be the respective comparison maps. Functoriality of the comparison map \cite[Theorem 5.3]{Balmer10b} yields a commutative diagram
\[
\begin{tikzcd}[cramped]
    {\Spc(\cat K)} \arrow[d, two heads, "\rho_{\cat K}"'] \arrow[r, "{\Spc(F)}"]
    & {\Spc(\Mod_R^c)} \arrow[d, two heads, "\rho_R"] \\
    {\Spec^h(R_*)} \arrow[r, equals] & {\Spec^h(R_*)}
\end{tikzcd}
\]
and hence, for any homogeneous ideal $I\subseteq R_*$, the associated Thomason subsets
\[
Y(I) \coloneqq \rho_{\cat K}^{-1}\bigl(V(I)\bigr)\subseteq\Spc(\cat K),
\qquad
Y_R(I)\coloneqq \rho_R^{-1}\bigl(V(I)\bigr)\subseteq\Spc(\Mod_R^c)
\]
satisfy
\[
Y(I)=\Spc(F)^{-1}\!\bigl(Y_R(I)\bigr).
\]
Let $e_{Y(I)}\in\cat C$ and $e_{Y_R(I)}\in\Mod_R$ be the associated tensor idempotents. By functoriality \cite[Proposition 5.11]{BalmerSanders17} we have $F(e_{Y_R(I)})\cong e_{Y(I)}$, and since $F$ is fully faithful, $G(e_{Y(I)})\cong e_{Y_R(I)}$ in $\Mod_R$.
\end{Rem}

\begin{Prop}\label{prop:morita-eYI}
For every $X\in\cat C$ there is a natural isomorphism of $R$-modules
\[
\Hom_{\cat C}(e_{Y(I)},X) \cong
\Hom_{\Mod_R}\!\bigl(e_{Y_R(I)},\Hom_{\cat C}(\unit,X)\bigr).
\]
\end{Prop}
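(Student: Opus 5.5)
The plan is to combine the Morita adjunction \eqref{eq:morita} with the identification $F(e_{Y_R(I)})\simeq e_{Y(I)}$ recorded just before the statement. First, we apply $F\dashv G$ directly: for every $X\in\cat C$ there is a natural equivalence of mapping spectra
\[
\Hom_{\cat C}\bigl(F(e_{Y_R(I)}),X\bigr)\simeq \Hom_{\Mod_R}\bigl(e_{Y_R(I)},G(X)\bigr)=\Hom_{\Mod_R}\bigl(e_{Y_R(I)},\Hom_{\cat C}(\unit,X)\bigr).
\]
Here we use the spectrally enriched form of the adjunction, which holds since $F$ is an exact, colimit-preserving functor between stable $\infty$-categories. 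Substituting $F(e_{Y_R(I)})\simeq e_{Y(I)}$, which comes from \cite[Proposition~5.11]{BalmerSanders17} applied to the tt-functor $F|_{\Mod_R^c}\colon\Mod_R^c\to\cat K$ together with $Y(I)=\Spc(F)^{-1}(Y_R(I))$, then gives the desired equivalence. Naturality in $X$ is inherited from the adjunction.

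It remains to see that this is an equivalence of $R$-modules and not just of spectra. The plan is to use that $F$ is symmetric monoidal and $\cat C$ is tensored over $\Mod_R$ via $F$. Concretely, $\Hom_{\cat C}(A,X)$ carries its $R$-module structure through the action of $R=\End(\unit)$ on $A$ (equivalently on $X$), and $G$ is the lax monoidal right adjoint of a symmetric monoidal functor. Hence the enriched adjunction is an equivalence of $\Mod_R$-enriched mapping objects: the internal $\Mod_R$-enrichment of $\cat C$ is $\Hom_{\cat C}(A,X)\simeq G\,\ihom{A,X}$. Then
\[
G\,\ihom{F(M),X}\simeq \ihomsub{R}{M,GX}
\]
by the projection formula $F(N)\otimes F(M)\simeq F(N\otimes_R M)$, and taking $M=e_{Y_R(I)}$ gives the result.

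The main obstacle is this last point. We must make sure the module structures match, i.e.\ establish the internal-hom form $G\,\ihom{F(M),X}\simeq\ihomsub{R}{M,GX}$ of the adjunction. I would verify it by Yoneda: for $N\in\Mod_R$, maps $N\to G\ihom{FM,X}$ correspond to maps $F(N)\otimes F(M)\simeq F(N\otimes_R M)\to X$, and hence to maps $N\otimes_R M\to GX$, i.e.\ to maps $N\to\ihomsub{R}{M,GX}$, naturally in $N$. Only strong monoidality of $F$ is used here. The identification of $F(e_{Y_R(I)})$ is taken as given from the preceding remark.
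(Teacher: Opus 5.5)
Your proof is correct and matches the paper's: substitute $F(e_{Y_R(I)})\simeq e_{Y(I)}$, then apply the adjunction $F\dashv G$ of \eqref{eq:morita}. Your extra check of $R$-linearity via $G\,\ihom{F(M),X}\simeq\ihomsub{R}{M,GX}$ is a valid elaboration of a point the paper leaves implicit; note that the step you call the projection formula is really just strong monoidality of $F$.
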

\begin{proof}
Using $F(e_{Y_R(I)})\cong e_{Y(I)}$ and the adjunction \eqref{eq:morita},
\[
\Hom_{\cat C}(e_{Y(I)},X)
\cong \Hom_{\cat C}\bigl(F(e_{Y_R(I)}),X\bigr)
\cong \Hom_{\Mod_R}\bigl(e_{Y_R(I)},\Hom_{\cat C}(\unit,X)\bigr)
\]
as required.
\end{proof}

\begin{Prop}\label{prop:comparison-map}
Let $\cat K\in\tring$ with $R_*$ noetherian, and let $I\subseteq R_*$ be a homogeneous ideal. Set $Y(I)\coloneqq\rho^{-1}(V(I))\subseteq\Spc(\cat K)$, where $\rho\colon\Spc(\cat K)\to\Spec^h(R_*)$ is Balmer's comparison map of \Cref{thm:comparison-map}. Then there is a natural morphism of Dirac ringed spaces
\[
  \overline{\rho}\colon \Spf(\cat K,Y(I))\longrightarrow \Spf^h(R_*,I),
\]
whose underlying map of spaces is the restriction $\rho|_{Y(I)}\colon Y(I)\to V(I)$. If the underlying comparison map $\rho$ is a homeomorphism, then $\overline{\rho}$ is an isomorphism of Dirac ringed spaces.
\end{Prop}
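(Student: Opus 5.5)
Since $\rho^{-1}(V(I))=Y(I)$, the restriction $\rho|_{Y(I)}$ lands in $V(I)$, so only the sheaf map needs building, and I would build it on the basis $\overline D(s)$, $s\in R_*$ homogeneous. By \cite[Theorem~5.3]{Balmer10b} we have $\rho^{-1}(D(s))=U(\cone(s))$, hence $\rho|_{Y(I)}^{-1}(\overline D(s))=\overline U(\cone(s))$. So I need natural maps
\[
(R_*[1/s])^{\wedge}_I\longrightarrow \pi_*\Hom_{\cat C}\bigl(e_{Y(I)},\unit[\cone(s)^{-1}]\bigr),
\]
compatible with restrictions; sheafifying then gives $\overline\rho^\#$.

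\textbf{Step 1: move to $\Mod_R$.} Applying \Cref{prop:morita-eYI} with $X=\unit[\cone(s)^{-1}]$ gives
\[
\Hom_{\cat C}(e_{Y(I)},\unit[\cone(s)^{-1}])\cong\Hom_{\Mod_R}\bigl(e_{Y_R(I)},G(\unit[\cone(s)^{-1}])\bigr).
\]
Moreover $\unit[\cone(s)^{-1}]\simeq \unit[s^{-1}]=\operatorname{colim}(\unit\xrightarrow{s}\Sigma^{-|s|}\unit\to\cdots)$, the standard telescope for a finite localization away from $\cone(s)$. Since $\unit$ is compact, $G$ commutes with this colimit, so $G(\unit[\cone(s)^{-1}])\simeq R[1/s]$ with $\pi_*=R_*[1/s]$.

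\textbf{Step 2: identify the target.} In $\Mod_R$, $e_{Y_R(I)}$ is the $I$-torsion idempotent, namely the Koszul-type colimit built from finitely many homogeneous generators $x_1,\dots,x_n$ of $I$; these exist because $R_*$ is noetherian. Hence $\Hom_R(e_{Y_R(I)},M)$ is the derived $I$-completion of $M$ (Greenlees--May / Dwyer--Greenlees). For $M=R[1/s]$, the Milnor sequence of \Cref{rem:completion-sequence}, in graded form, applies. Since $R_*[1/s]$ is flat over noetherian $R_*$, the $\lim^1$ terms vanish (Mittag-Leffler and Artin--Rees), and the same holds for higher Tor via the Künneth/universal coefficient spectral sequence for $R/(x_1^k,\dots,x_n^k)\otimes_R R[1/s]$. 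This gives
\[
\pi_*\widehat{R[1/s]}_I\cong (R_*[1/s])^{\wedge}_I.
\]
This is the graded analogue of \Cref{lem:completion-is-a-completion}. Even without the isomorphism, the canonical map $R[1/s]\to\widehat{R[1/s]}_I$ on $\pi_*$ lands in a derived-$I$-complete $R_*$-module. It therefore factors through the $I$-adic completion, which gives the required morphism on the basis. Naturality in $s$ comes from functoriality of $\unit[s^{-1}]$ and of the idempotents.

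\textbf{Step 3: the homeomorphism case.} If $\rho$ is a homeomorphism, the underlying map is a homeomorphism $Y(I)\to V(I)$. The basic-open maps are isomorphisms by the Step 2 computation, which is the main obstacle: the torsion/Koszul description of $e_{Y_R(I)}$ and the vanishing of the $\lim^1$ and higher-Tor terms in the graded, noncommutative-sign setting. Note that $R$ need not be Eilenberg--MacLane, so I would run the argument through the Koszul tower $R/\!/x^k$ and a flatness argument for $R_*[1/s]$ over $R_*$. I would also check that the sets $\overline U(\cone(s))$ form a basis of $Y(I)$ when $\rho$ is a homeomorphism, so that an isomorphism on this basis suffices after sheafification.
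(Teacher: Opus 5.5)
Your proposal is correct and follows essentially the same route as the paper. Both arguments reduce to $\Mod_R$ via the Morita identification, compute $\pi_*G(\unit[\cone(s)^{-1}])\cong R_*[1/s]$ by the telescope, identify $e_{Y_R(I)}$ with the Koszul (local cohomology) object so that mapping out of it is derived $I$-completion, use flatness of $R_*[1/s]$ to reduce to the classical completion, and use the basis argument when $\rho$ is a homeomorphism. The only difference is the bookkeeping in the flatness step: the paper uses the Greenlees--May local homology spectral sequence, while you use a Milnor sequence along the Koszul tower, whose real input is that the higher Koszul homology of $R_*$ with respect to $x_1^k,\dots,x_n^k$ is pro-zero in $k$.

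The one verification you omit is that the isomorphism $(R_*[1/s])^{\wedge}_I\cong\pi_*\Hom_{\Mod_R}(e_{Y_R(I)},R[1/s])$ is an isomorphism of Dirac rings, which is needed because $\overline\rho^{\#}$ must be a map of sheaves of Dirac rings. The paper gets this from lax monoidality of completion and multiplicativity of the spectral sequence. In your setup it follows from $R_*[1/s]$-linearity of the Milnor-sequence maps together with $I$-adic continuity of the multiplication.
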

\begin{proof}
On underlying spaces, we take the restriction $\overline{\rho}\coloneqq\rho|_{Y(I)}\colon Y(I)\to V(I)$. Let $\overline D(s)=D(s)\cap V(I)$, where $s\in R_*$ is homogeneous. By \cite[Theorem~5.3(b)]{Balmer10b}, $\rho^{-1}(D(s))=U(\cone(s))$, and hence
\[
\overline{\rho}^{-1}\bigl(\overline D(s)\bigr)=Y(I)\cap\rho^{-1}(D(s))=\overline U(\cone(s)).
\]
The defining presheaf of $\Spf^h(R_*,I)$ takes the value
\[
\bigl(R_*[1/s]\bigr)^{\wedge}_I
\]
on $\overline D(s)$. On the tensor-triangular side, \Cref{rem:adjoint-sheaf,prop:morita-eYI} identify the corresponding value of the defining presheaf with
\[
{}_{\mathrm p}\mathcal O_{\cat K,Y(I)}\bigl(\overline U(\cone(s))\bigr)\cong\pi_*\Hom_{\Mod_R}\bigl(e_{Y_R(I)},\Hom_{\cat C}(\unit,\unit[1/s])\bigr).
\]
We first identify the completion appearing here with the derived completion in the sense of Greenlees and May. Choose homogeneous generators $x_1,\ldots,x_n$ of $I$ and set
\[
K(I)\coloneqq\cone(x_1)\otimes_R\cdots\otimes_R\cone(x_n)\in\Perf(R).
\]
Using $\rho_R^{-1}(V(x_i))=\supp_R(\cone(x_i))$, we obtain
\[
Y_R(I)=\rho_R^{-1}(V(I))=\bigcap_{i=1}^n\supp_R(\cone(x_i))=\supp_R(K(I)).
\]
It follows that the $Y_R(I)$-torsion subcategory of $\Mod_R$ is the localizing tensor ideal generated by $K(I)$. The Balmer--Favi torsion idempotent $e_{Y_R(I)}$ therefore identifies with the stable Koszul, or local-cohomology, object $\Gamma_I R$. Abstract local duality gives a natural equivalence
\[
\Hom_{\Mod_R}(e_{Y_R(I)},M) \simeq \Hom_{\Mod_R}(\Gamma_I R,M) \simeq\Lambda^I M,
\]
where $\Lambda^I$ denotes Greenlees--May derived $I$-adic completion; see \cite{bhv1,bhv2}. Consequently,
\[
\pi_*\Hom_{\Mod_R}\bigl(e_{Y_R(I)},M\bigr)\cong\pi_*\Lambda^I M.
\]
Set $M_s\coloneqq\Hom_{\cat C}(\unit,\unit[1/s])$. Since the unit is compact and $\unit[1/s]$ is obtained by telescope localization, the canonical map
\[
R_*[1/s]\longrightarrow\pi_*M_s
\]
is an isomorphism of Dirac rings. Under the noetherian hypothesis, the Greenlees--May spectral sequence is strongly convergent and has $E_2$-page
\[
E_2^{p,q}=H^I_{-p,-q}\bigl(\pi_*M_s\bigr)\cong H^I_{-p,-q}\bigl(R_*[1/s]\bigr),
\]
converging to $\pi_*\Lambda^I M_s$; see \cite[Eq.~(3.3) and Theorem~4.2]{GreenleesMay1995Completions}. Here $H^I_{*,*}(-)$ denotes graded local homology, which agrees with the left derived functors of graded $I$-adic completion by \cite[Theorem~6.3]{Miller2002Graded}. Since $R_*[1/s]$ is graded-flat over $R_*$, the graded form of \cite[Proposition~1.1]{GreenleesMay92}, applied degreewise, gives
\[
H^I_{i,*}\bigl(R_*[1/s]\bigr)=0\quad(i>0),\qquad H^I_{0,*}\bigl(R_*[1/s]\bigr)\cong\varprojlim_nR_*[1/s]/I^nR_*[1/s].
\]
The spectral sequence therefore collapses and its edge morphism gives a natural isomorphism
\[
\pi_*\Lambda^I M_s\xrightarrow{\sim}\bigl(R_*[1/s]\bigr)^{\wedge}_I.
\]
This is an isomorphism of Dirac rings. Indeed, $M_s$ is a commutative $R$-algebra, derived completion is lax symmetric monoidal, and the Greenlees--May spectral sequence is multiplicative. Taking the inverse of the edge isomorphism yields
\[
\bigl(R_*[1/s]\bigr)^{\wedge}_I\xrightarrow{\sim}{}_{\mathrm p}\mathcal O_{\cat K,Y(I)}\bigl(\overline U(\cone(s))\bigr).
\]
These maps are natural in the homogeneous element $s$ and compatible with restriction. After sheafification, they define
\[
\overline{\rho}^{\#}\colon\mathcal O_{\Spf^h(R_*,I)}\longrightarrow\overline{\rho}_*\mathcal O_{\Spf(\cat K,Y(I))},
\]
and hence a morphism of Dirac ringed spaces
\[
\overline{\rho}\colon\Spf(\cat K,Y(I))\longrightarrow\Spf^h(R_*,I).
\]
If $\rho$ is a homeomorphism, then $\overline{\rho}$ is a homeomorphism and the preceding maps are isomorphisms on a basis. Their sheafifications are therefore isomorphisms, so $\overline{\rho}$ is an isomorphism of Dirac ringed spaces.
\end{proof}
\begin{Cor}\label{cor:complete-comparison}
Under the hypotheses above, there is a natural morphism of Dirac ringed spaces
\[
  \widehat{\rho} \colon \Spf\bigl(\widehat{\cat C}_{Y(I)}^{\dual}\bigr)
  \longrightarrow \Spf^h\bigl((R_*)^{\wedge}_I\bigr).
\]
\end{Cor}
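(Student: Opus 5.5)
The plan is to compose three morphisms that are already available: the completion isomorphism, the algebraic comparison map, and the completion isomorphism for formal spectra of Dirac rings.

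First, \Cref{thm:completion-theorem}, applied with $Y=Y(I)$, gives an isomorphism of Dirac ringed spaces
\[
(\varphi,\varphi^\#)\colon\Spf\bigl(\widehat{\cat C}^{\dual}_{Y(I)},\varphi^{-1}(Y(I))\bigr)\xrightarrow{\sim}\Spf(\cat K,Y(I)).
\]
By the convention after that theorem, its source is what the statement abbreviates as $\Spf(\widehat{\cat C}^{\dual}_{Y(I)})$.

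Second, \Cref{prop:comparison-map} supplies a natural morphism $\overline\rho\colon\Spf(\cat K,Y(I))\to\Spf^h(R_*,I)$. This uses the standing hypothesis that $R_*$ is noetherian and that $I$ is homogeneous.

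Third, \Cref{rem:adic-completion} gives the isomorphism
\[
\Spf^h\bigl((R_*)^{\wedge}_I,I(R_*)^{\wedge}_I\bigr)\xrightarrow{\sim}\Spf^h(R_*,I),
\]
and its source is what is abbreviated as $\Spf^h((R_*)^{\wedge}_I)$.

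I then define
\[
\widehat\rho\coloneqq(\text{completion iso})^{-1}\circ\overline\rho\circ(\varphi,\varphi^\#).
\]
This is a composite of morphisms of Dirac ringed spaces, so it is one as well.

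Naturality reduces to naturality of each factor. Each factor is built from maps of defining presheaves on basic opens that are natural in $c$ or $s$, and from functoriality of idempotents (\cite[Proposition~5.11]{BalmerSanders17}). So the composite is natural in $(\cat K,I)$ for tt-functors compatible with the ideals.

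The only point needing care is to justify \Cref{rem:adic-completion} in the graded setting, since the corollary depends on it. On spaces, $V(I(R_*)^{\wedge}_I)\to V(I)$ is a homeomorphism because $(R_*)^{\wedge}_I/I^n\cong R_*/I^n$. On basic opens $\overline D(s)$, one needs $\bigl((R_*)^{\wedge}_I[1/s]\bigr)^{\wedge}_I\cong(R_*[1/s])^{\wedge}_I$. This holds because both sides are $\varprojlim_n R_*[1/s]/I^n$, using $(R_*)^{\wedge}_I/I^n\cong R_*/I^n$ and the fact that localization commutes with quotients. Each basic open of the completed side has the form $\overline D(t)$ for a homogeneous $t\in(R_*)^{\wedge}_I$. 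We may replace $t$ by an $s\in R_*$ with $t-s\in I$, since $\overline D(t)=\overline D(s)$ on $V(I)$ and $t$ is a unit after inverting $s$ modulo $I$-adic completion. Hence these opens form a common basis, and the identification sheafifies.

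If one prefers to avoid this remark altogether, the corollary can instead be stated with target $\Spf^h(R_*,I)$ and no third factor. I do not expect any real obstacle here; the content is formal bookkeeping of the three identifications.
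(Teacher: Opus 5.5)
Your proposal is correct and is the paper's argument: the paper's proof composes the isomorphism of \Cref{thm:completion-theorem} (with $Y=Y(I)$), the comparison morphism $\overline\rho$ of \Cref{prop:comparison-map}, and the inverse of the completion isomorphism of \Cref{rem:adic-completion}. Your check of the graded case of \Cref{rem:adic-completion}, which the paper takes for granted, is a useful addition; the only correction is that the approximating element should satisfy $t-s\in I(R_*)^{\wedge}_I$, not $t-s\in I$.
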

\begin{proof}
Combine \Cref{thm:completion-theorem} with \Cref{rem:adic-completion} and \Cref{prop:comparison-map}.
\end{proof}

\begin{Exa}
Let $A$ be an even periodic commutative ring spectrum with $\pi_0(A)$ a complete regular local ring with maximal ideal $\mathfrak m$ . By \cite[Theorem~1.1]{DellAmbrogioStanley16} the comparison map
\[
  \rho\colon \Spc(\Perf(A)) \longrightarrow \Spec^h(\pi_*A) \cong \Spec(\pi_0A)
\]
is a homeomorphism. In fact, by \cite[Proposition 6.11]{Balmer10b}, this is even an isomorphism of Dirac ringed spaces. Let $\kappa(A)\coloneqq A/\mathfrak m$ (so that $\pi_0\kappa(A)\cong \pi_0(A)/\mathfrak m$). This is an $A$-module with $\supp(\kappa(A))=V(\mathfrak m)$. Applying \Cref{prop:comparison-map} and \Cref{cor:complete-comparison} with $I=\mathfrak m$ we obtain an isomorphism of Dirac ringed spaces
\[
  \Spf\bigl((L_{\kappa(A)}\Mod_A)^{\dual}\bigr)
  \xrightarrow{ \cong } \Spf^h(\pi_*(A),\mathfrak m).
\]
Here we use \Cref{rem:completion-as-bousfield} to identify the completion functor as a Bousfield localization in this case.
\end{Exa}

\section{Chromatic homotopy}\label{sec:chromatic}
We now turn to examples coming from chromatic homotopy theory. As usual, we fix a prime number $p$ throughout.

\begin{Rem}\label{rem:BP-En}
Recall that the Brown--Peterson spectrum $BP$ has coefficient ring
\[
    BP_* \cong \mathbb{Z}_{(p)}[v_1,v_2,\ldots].
\]
Johnson--Wilson theory $E(n)$ is obtained from $BP$ by first quotienting by the ideal $(v_{n+1},v_{n+2},\ldots)$ and then inverting $v_n$ (using, for example, highly structured ring spectra \cite{ElmendorfKrizMellMay1997Rings}). Its coefficients are
\[
    E(n)_* \cong v_n^{-1}BP_*/(v_{n+1},v_{n+2},\ldots).
\]
More generally, for an invariant regular sequence $J_k=(p^{i_0},v_1^{i_1},\ldots,v_{k-1}^{i_{k-1}})$, we can form a quotient $BP/J_k$ with $\pi_*(BP/J_k) \cong BP_*/J_k$, and for $0\le k\le n$ define a spectrum $E(n,J_k)$ with
\[
    E(n,J_k)_* \cong v_n^{-1}\!\bigl((BP/J_k)_*/(v_{n+1},v_{n+2},\ldots)\bigr).
\]
For $k=0$ this recovers $E(n)$, while for $J_n=(p,v_1,\ldots,v_{n-1})$ we have $E(n,J_n)\simeq K(n)$, Morava $K$-theory. Moreover, by \cite[Proposition~2.10]{Heard2023local}, there is an equivalence of Bousfield classes
\[
    \langle E(n,J_k)\rangle =\langle K(k)\vee\cdots\vee K(n)\rangle,
\]
hence
\[
    \Sp_{E(n,J_k)} \simeq \Sp_{K(k)\vee\cdots\vee K(n)}.
\]
\end{Rem}

\begin{Not}\label{not:Spn-notation}
Write $\Sp_n\coloneqq \Sp_{E(n)}$ for the $E(n)$-local stable homotopy category, $\Sp_n^c$ for its subcategory of compact objects, and $L_n$ for Bousfield localization at $E(n)$. For $h\ge 0$ let $K(h)$ denote Morava $K$-theory and fix a finite type $(h+1)$-spectrum $F(h+1)$.
\end{Not}

\begin{Def}\label{def:Ph}
For $0\le h\le n$, define the $\otimes$-ideal
\[
    \cat P_h \coloneqq \SET{x\in \Sp_n^c}{K(h)_*(x)=0}.
\]
\end{Def}

\begin{Rem}\label{rem:Ph-prime}
Since $K(h)_*(x\otimes y)\cong K(h)_*(x)\otimes_{K(h)_*}K(h)_*(y)$, each $\cat P_h$ is prime. Moreover,
\[
    \cat P_h = \thickid{\langle L_nF(h+1)\rangle}.
\]
\end{Rem}

\begin{Thm}[Hovey--Strickland]\label{thm:spec-en}
The spectrum of the $E(n)$-local category is the finite chain
\[
    \Spc(\Sp_n^c) = \{ \underbrace{\cat P_0}_{\text{generic}} \supsetneq \cat P_1 \supsetneq \cdots \supsetneq \underbrace{\cat P_n}_{\text{closed}} \}.
\]
The topology is defined by the closure operator $\overline{\{\cat P_h\}} = \SET{\cat P_k}{h\le k\le n}$. For $0\le k\le n$, consider the basic open set defined by the type $(k+1)$-spectrum $F(k+1)$:
\[
    U_k \coloneqq U\bigl(L_nF(k+1)\bigr)
    = \SET{\cat P_i}{L_nF(k+1) \in \cat P_i}
    = \SET{\cat P_i}{0 \le i \le k}.
\]
The value of the structure sheaf on this open set is
\[
    \mathcal O_{\Spc(\Sp_n^c)}(U_k) \;\cong\; \pi_* L_k S^0.
\]
\end{Thm}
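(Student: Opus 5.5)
The plan is to assemble the statement from three standard inputs: the Hovey--Strickland classification of thick ideals in $\Sp_n^c$, the identification of $\cat P_h$ as a prime (\Cref{rem:Ph-prime}), and the description of the finite localizations away from type $\geq k+1$ complexes as $L_k$ (smashing) on $E(n)$-local spectra.

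\textbf{Points and topology.} By Hovey--Strickland, the thick $\otimes$-ideals of $\Sp_n^c$ are exactly the ideals $\thickid\langle L_nF(h+1)\rangle$ for $0\le h\le n$, together with $0$ and $\Sp_n^c$. Since $\Sp_n$ is local, every thick ideal is radical. A prime is then a proper thick ideal $\cat P$ such that $x\otimes y\in\cat P$ implies $x\in\cat P$ or $y\in\cat P$.
\begin{itemize}
\item Each $\cat P_h$ is prime by \Cref{rem:Ph-prime}.
\item The zero ideal is not prime when $n\ge1$, because $L_nF(1)\otimes L_nF(n+1)$-type products vanish. More simply, $0=\cat P_n$ is already on the list, since $K(n)_*(x)=0$ forces $x\simeq 0$ for $E(n)$-local finite $x$.
\end{itemize}
Hence $\Spc(\Sp_n^c)=\{\cat P_0,\dots,\cat P_n\}$, with $\cat P_h=\thickid\langle L_nF(h+1)\rangle$ strictly decreasing in $h$. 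In Balmer's topology, $\overline{\{\cat P\}}=\{\cat Q\subseteq\cat P\}$, which gives the stated closure operator.

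For the open sets, we have $L_nF(k+1)\in\cat P_i$ if and only if $K(i)_*F(k+1)=0$. By the definition of type, this holds if and only if $i\le k$, so $U_k=\{\cat P_0,\dots,\cat P_k\}$.

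\textbf{Structure sheaf.} Since $\Spc(\Sp_n^c)$ is a finite $T_0$ space, every point has a minimal open neighbourhood. The $U_k$ are all the nonempty opens, and they form a chain. Therefore sheafification does not change values on $U_k$: the only cover of $U_k$ contains $U_k$ itself. This gives
\[
\mathcal O(U_k)=\pi_*\Hom_{\Sp_n}(\unit,\unit_{U_k})
\]
by \Cref{remark:structure-sheaf-from-hom-functor}. Here $\unit_{U_k}=f_{Z}$ with $Z=\supp(L_nF(k+1))$, which is the finite localization of $L_nS^0$ away from $\Loc\langle L_nF(k+1)\rangle$.

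The key step is to identify this finite localization with $L_kS^0$. By the Hopkins--Ravenel smash product theorem, $L_k$ is smashing, and its acyclics in $\Sp_n$ form the localizing subcategory generated by $L_nF(k+1)$. This is the telescope conjecture at the level of $E(n)$-local finite localizations, which Hovey--Strickland establish via $L_k=L_k^f$ on the $E(n)$-local category. So $f_Z\simeq L_kS^0$, and hence $\mathcal O(U_k)\cong\pi_*L_kS^0$.

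I expect this identification to be the main obstacle. It rests on the smash product theorem and on the fact that $L_nF(k+1)$ generates the $L_k$-acyclics in $\Sp_n$. I would cite Hovey--Strickland [Theorem 6.9 and Corollary 6.10] and Ravenel's orange book for it.
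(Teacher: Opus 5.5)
Your proposal is correct and follows the paper's proof. The paper cites Hovey--Strickland's Theorem 6.9 for the space, where you derive it from their thick-subcategory classification. You and the paper then argue the same way: sheafification is trivial on $U_k$ because the nonempty opens form a chain, and Hovey--Strickland's Corollary 6.10 identifies the localized unit with $L_kS^0$.

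Delete the claim that the zero ideal is not prime for $n\ge 1$. It equals $\cat P_n$, the closed point, which is prime. The product $L_nF(1)\otimes L_nF(n+1)$ vanishes only because $L_nF(n+1)=0$, so it witnesses nothing. Relatedly, your list of thick ideals counts $0$ twice. Radicality comes from rigidity rather than locality, and you do not actually need it.
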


\begin{proof}
The topological description is due to \cite[Theorem~6.9]{HoveyStrickland99}; see also \cite[Proposition~3.5]{BarthelHeardNaumann20pp} for a translation into this language.

The point $\cat P_k$ has no proper open neighborhood in $U_k$, so every open cover of $U_k$ contains $U_k$ itself and sheafification does not change the defining presheaf value. By \cite[Corollary~6.10]{HoveyStrickland99}, the finite localization of $\Sp_n$ away from $L_nF(k+1)$ identifies the localized category $\Sp_n[L_nF(k+1)^{-1}]$ with $\Sp_k$. Under this identification the localized tensor unit corresponds to $L_kS^0$. Therefore,
\[
    \mathcal O_{\Spc(\Sp_n^c)}(U_k)
    \cong \pi_* \End_{\Sp_k}(L_kS^0)
    \cong \pi_* L_k S^0.\qedhere
\]
\end{proof}

\begin{Def}\label{def:Yhplus1}
For a fixed height $h<n$, let $Y_{h+1}$ be the closed subset corresponding to spectra of type $\ge h+1$:
\[
    Y_{h+1} \coloneqq \supp(L_nF(h+1)) = \{ \cat P_{h+1}, \ldots, \cat P_n \}.
\]
\end{Def}

\begin{Rem}\label{rem:spk-n}
By \Cref{rem:completion-as-bousfield}, the completion $(\widehat{\Sp_n})_{Y_{h+1}}$ is equivalent to Bousfield localization of $\Sp_n$ at $L_nF(h+1)$. By \cite[Lemma~6.16]{bhv1}, this is equivalent to localization of $\Sp$ at $F(h+1)\otimes E(n)$, which by \cite[Proposition~2.10]{Heard2023local} is equivalent to $\Sp_{K(h+1)\vee\cdots\vee K(n)}$ (the special case where $h = n-1$ is considered in \cite[Proposition 8.11]{balmer2025tateintermediatevaluetheorem}). Moreover, \cite[Corollary~2.29]{Heard2023local} gives
\[
    L_nF(h+1)\simeq L_{K(h+1)\vee\cdots\vee K(n)} F(h+1).
\]
By \Cref{thm:completion-theorem}, we therefore obtain an isomorphism of Dirac ringed spaces
\[
    \Spf(\Sp_n^{\dual},Y_{h+1}) \cong 
    \Spf\bigl(\Sp_{K(h+1)\vee\cdots\vee K(n)}^{\dual}\bigr).
\]
\end{Rem}

\begin{Thm}\label{thm:formal-chromatic}
For $0\le h\le n-1$, the formal spectrum \( \Spf\bigl(\Sp_{K(h+1)\vee\cdots\vee K(n)}^{\dual}\bigr) \) has underlying topological space
\[
    Y_{h+1} = \{ \cat P_{h+1} \supsetneq \cat P_{h+2} \supsetneq \cdots \supsetneq \cat P_n \} \subseteq \Spc(\Sp_n^c).
\]
For $h+1 \le k \le n$, on the basic open set
\[
    \overline{U}_k \coloneqq U(L_nF(k+1)) \cap Y_{h+1} = \SET{\cat P_i}{h+1 \le i \le k},
\]
the structure sheaf takes the value\footnote{We omit the subscript on the $\mathcal O$ for reasons of space.}
\[
    \mathcal O\bigl(\overline{U}_k\bigr)
   \cong \pi_*L_{K(h+1)\vee\cdots\vee K(k)} S^0.
\]
\end{Thm}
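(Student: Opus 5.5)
By \Cref{rem:spk-n}, which rests on \Cref{thm:completion-theorem}, it suffices to compute $\Spf(\Sp_n^c,Y_{h+1})$. By \Cref{def:formal-balmer} its underlying space is $Y_{h+1}$ with the subspace topology, and \Cref{thm:spec-en} gives the chain description. For $h+1\le k\le n$ the basic open $\overline U_k=U_k\cap Y_{h+1}$ equals $\{\cat P_{h+1},\dots,\cat P_k\}$.

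\textbf{No sheafification on $\overline U_k$.} The point $\cat P_k$ lies in $\overline U_k$, and every open subset of $Y_{h+1}$ containing $\cat P_k$ contains its generizations $\cat P_{h+1},\dots,\cat P_{k-1}$. Hence any open cover of $\overline U_k$ must contain $\overline U_k$ itself. Sheafifying on a basis therefore leaves the value on $\overline U_k$ unchanged, exactly as in the proof of \Cref{thm:spec-en}. So
\[
\mathcal O(\overline U_k)\cong \pi_*\Hom_{\Sp_n}\bigl(\unit,\widehat{(\unit[L_nF(k+1)^{-1}])}_{Y_{h+1}}\bigr).
\]

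\textbf{Identifying the two objects.} By \cite[Corollary~6.10]{HoveyStrickland99} the localized unit is $L_kS^0$, viewed in $\Sp_n$. By \Cref{rem:completion-as-bousfield} together with \Cref{rem:spk-n}, $Y_{h+1}$-completion on $\Sp_n$ is Bousfield localization at $F(h+1)\otimes E(n)$, i.e.\ at $K(h+1)\vee\cdots\vee K(n)$. Since $\unit=L_nS^0$ and maps out of it compute homotopy, the section ring is
\[
\pi_*L_{K(h+1)\vee\cdots\vee K(n)}L_kS^0.
\]

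\textbf{Main step: simplifying the localization.} It remains to show $L_{K(h+1)\vee\cdots\vee K(n)}L_kS^0\simeq L_{K(h+1)\vee\cdots\vee K(k)}S^0$. The approach is to note that $K(j)_*(L_kX)=0$ for $j>k$. Hence the localization of $L_kS^0$ at $K(h+1)\vee\cdots\vee K(n)$ agrees with its localization at $K(h+1)\vee\cdots\vee K(k)$. Indeed, both are localizations of $L_kS^0$, and an $L_kS^0$-module map is a $K(h+1)\vee\cdots\vee K(n)$-equivalence if and only if it is a $K(h+1)\vee\cdots\vee K(k)$-equivalence, because the higher Morava $K$-theories see nothing $E(k)$-local. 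One also needs that $K(h+1)\vee\cdots\vee K(k)$-local spectra are $E(k)$-local. This follows since $\langle K(h+1)\vee\cdots\vee K(k)\rangle\le\langle E(k)\rangle$, which gives $L_{K(h+1)\vee\cdots\vee K(k)}L_k\simeq L_{K(h+1)\vee\cdots\vee K(k)}$.

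Alternatively, the identification can be phrased as $L_{F(h+1)}$ applied within $\Sp_k$, using \cite[Proposition~2.10]{Heard2023local} with $n$ replaced by $k$. This is the step I expect to need the most care: the completion is taken in $\Sp_n$ but evaluated on a $\Sp_k$-local object, and one must check that Bousfield localization commutes correctly with the inclusion $\Sp_k\subseteq\Sp_n$. The Bousfield-class argument above is the first thing I would try.
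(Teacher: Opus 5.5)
Your proposal is correct. Up to $\mathcal O(\overline U_k)\cong\pi_*L_{K(h+1)\vee\cdots\vee K(n)}L_kS^0$ it follows the paper exactly: the same reduction via \Cref{rem:spk-n}, the same argument that $\cat P_k$ has no proper open neighbourhood, and the same use of \cite[Corollary~6.10]{HoveyStrickland99}. The routes differ only in the main simplification.

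Write $T\coloneqq K(h+1)\vee\cdots\vee K(n)$ and $T'\coloneqq K(h+1)\vee\cdots\vee K(k)$. The paper uses the fracture pullback square of \cite[Remark~2.27]{Heard2023local}. Since $K(i)\wedge L_kS^0\simeq 0$ for $i>k$, the corner $L_{K(k+1)\vee\cdots\vee K(n)}L_kS^0$ vanishes, and so does the corner below the top-right one. This gives $L_TL_kS^0\simeq L_{T'}L_kS^0$, with $k=n$ treated separately.

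You instead argue from the universal property. The map $L_kS^0\to L_{T'}L_kS^0$ is a map of $E(k)$-local spectra, using the fact you flag. It is a $T'$-equivalence, hence a $T$-equivalence, because $K(j)$ kills every $E(k)$-local spectrum for $j>k$ ($L_k$ is smashing). To make ``both are localizations of $L_kS^0$'' an actual identification, state the remaining point explicitly: every $T'$-local spectrum is $T$-local, since $\langle T'\rangle\le\langle T\rangle$. Uniqueness of $T$-localization then finishes the argument.

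Your route is more elementary: it needs no fracture square and is uniform in $k$, including $k=n$. The paper's is shorter given the citation. Both rest on the same input $K(j)\wedge L_kS^0\simeq0$ for $j>k$, and both end with $\langle T'\rangle\le\langle E(k)\rangle$.

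The worry in your last paragraph is unnecessary. $L_kS^0$ is $E(n)$-local, and \Cref{rem:spk-n} already identifies $Y_{h+1}$-completion on $\Sp_n$ with the restriction of $L_T$ from $\Sp$. The whole computation therefore takes place in $\Sp$, and nothing about compatibility with $\Sp_k\subseteq\Sp_n$ needs checking.
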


\begin{proof}
The underlying space statement follows from \Cref{thm:spec-en} and the definition of $Y_{h+1}$.

The point $\cat P_k$ has no proper open neighborhood inside $\overline U_k$. Indeed, any open subset of $Y_{h+1}$ containing $\cat P_k$ contains all of $\overline U_k$. Hence every open cover of $\overline U_k$ contains $\overline U_k$ itself, so sheafification does not change the defining presheaf value on this basic open.

For $h+1\le k\le n$, the Bousfield class argument in \Cref{rem:spk-n} shows that
\[
\begin{split}
\mathcal O\bigl(\overline{U}_k\bigr)
&\cong \pi_* \Hom_{\Sp_n}\bigl(L_n S^0,
L_{K(h+1)\vee\cdots\vee K(n)}\,L_k S^0\bigr) \\
&\cong \pi_* L_{K(h+1)\vee\cdots\vee K(n)}L_k S^0.
\end{split}
\] 

It remains to show that
\[
L_{K(h+1)\vee\cdots\vee K(n)}L_k S^0 \simeq L_{K(h+1)\vee\cdots\vee K(k)} S^0.
\]
For $k=n$ this follows directly from the final localization argument below, so assume that $k<n$. By \cite[Remark~2.27]{Heard2023local}, there is a pullback square \[\begin{tikzcd}[ampersand replacement=\&]
    {L_{K(h+1) \vee \cdots \vee K(n)}L_kS^0} \& {L_{K(h+1) \vee \cdots \vee K(k)}L_kS^0} \\
    {L_{K(k+1) \vee \cdots \vee K(n)}L_kS^0} \& {L_{K(h+1) \vee \cdots \vee K(k)}L_{K(k+1) \vee \cdots \vee K(n)}L_kS^0.} \arrow[from=1-1, to=2-1] \arrow[from=1-1, to=1-2] \arrow[from=2-1, to=2-2] \arrow[from=1-2, to=2-2]
\end{tikzcd}\]
Since $L_k$ is smashing, $\langle L_kS^0\rangle=\langle E(k)\rangle=\langle K(0)\vee\cdots\vee K(k)\rangle$. Thus $K(i)\wedge L_kS^0\simeq0$ for $i>k$, and hence $L_{K(k+1)\vee \cdots \vee K(n)}L_kS^0\simeq 0$. The bottom-right corner also vanishes, being a further localization of the bottom-left corner. Therefore, the pullback identifies the upper-left corner with the upper-right corner:
\[
L_{K(h+1) \vee \cdots \vee K(n)}L_kS^0 \;\simeq\; L_{K(h+1) \vee \cdots \vee K(k)}L_kS^0.
\]
Finally, since $\langle K(h+1)\vee\cdots\vee K(k)\rangle\le \langle E(k)\rangle$, localization at $E(k)$ does not affect $K(h+1)\vee\cdots\vee K(k)$-localization, and we obtain
\[
L_{K(h+1) \vee\cdots \vee K(k)} L_kS^0 \simeq L_{K(h+1)\vee \cdots \vee K(k)}S^0.
\]
Combining these equivalences yields the claimed identification, and hence
\[
\mathcal O\bigl(\overline{U}_k\bigr)
\cong \pi_*L_{K(h+1)\vee\cdots\vee K(k)} S^0,
\]
as desired.
\end{proof}
\begin{Rem}
The categorical structure sheaf of \Cref{thm:categorical-formal-sheaf} refines the preceding calculation. For $h+1 \le k \le n$, there are natural symmetric monoidal equivalences
\[
\cat O_{\Spf(\Sp_n^c,Y_{h+1})}^{\infty}\bigl(\overline U_k\bigr)
\simeq \widehat{\Sp_k}_{Y_{h+1} \cap U_k}
\simeq \Sp_{K(h+1) \vee \cdots \vee K(k)}.
\]
Indeed, $\Sp_n(U_k) \simeq \Sp_k$, and completion at $Y_{h+1} \cap U_k$ is Bousfield localization at $K(h+1) \vee \cdots \vee K(k)$. Consequently, the associated sheaf of commutative algebra spectra satisfies
\[
\mathcal O_{\Spf(\Sp_n^c,Y_{h+1})}^{\unit}\bigl(\overline U_k\bigr)
\simeq L_{K(h+1) \vee \cdots \vee K(k)} S^0.
\]
Applying $\pi_*$ recovers the calculation of \Cref{thm:formal-chromatic}.
\end{Rem}
\section{Further examples}\label{sec:further_examples}
In this short section, we consider two further examples, from equivariant homotopy theory and modular representation theory.
\begin{Not}\label{not:cofree-borel}
Let $G$ be a finite group and $p$ a prime. We write $\Sp_{G,(p)}^{\cofree}$ for the Bousfield localization of $\Sp_{G,(p)}$ at $G_+$, or equivalently, the completion of $\Sp_{G,(p)}$ at the Thomason subset
\[
Y_G \coloneqq \supp(G_+) \subseteq \Spc(\Sp_{G,(p)}^c).
\]
As a set, the spectrum $\Spc(\Sp_{G,(p)}^c)$ and the subset $Y_G$ are computed in \cite{BalmerSanders17}.
\end{Not}

\begin{Prop}\label{prop:restriction-cofree-formal}
Let $G$ be a finite group and $p$ a prime. Restriction along the inclusion $e \le G$ defines an exact symmetric monoidal functor
\[
\res^G_e \colon \Sp_{G,(p)} \longrightarrow \Sp_{(p)}.
\]
Let $Y_G\coloneqq \supp(G_+) \subseteq \Spc(\Sp_{G,(p)}^c)$. Then $\res^G_e$ induces a morphism of Dirac ringed spaces
\[
(\varphi,\varphi^\#)\colon
\Spc(\Sp_{(p)}^c)
\longrightarrow
\Spf(\Sp_{G,(p)}^{\cofree,\dual}).
\]
Moreover, $\varphi$ is a homeomorphism of the underlying topological spaces. However, for $G=C_2$ and $p=2$, this morphism is not an isomorphism of Dirac ringed spaces.
\end{Prop}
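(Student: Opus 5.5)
The plan has three parts: build the morphism with \Cref{lem:functoriality}, compare underlying spaces using Balmer--Sanders, and then compare global sections at $C_2$ via the Segal conjecture.

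\emph{Constructing the morphism.} I will apply \Cref{lem:functoriality} to $F=\res^G_e\colon\Sp_{G,(p)}^c\to\Sp_{(p)}^c$. For the target I take $Y'=\Spc(\Sp_{(p)}^c)$, and for the source the Thomason subset $Y_G$. The hypothesis to check is $\varphi(\Spc(\Sp^c_{(p)}))\subseteq Y_G$. Since $\res^G_e(G_+)\simeq\bigvee_{|G|}S^0$ has full support, $\varphi^{-1}(\supp(G_+))=\supp(\res^G_e G_+)$ is everything, so the hypothesis holds. By \Cref{exa:whole-spectrum} the source is $\Spc(\Sp^c_{(p)})$. Composing with the isomorphism of \Cref{thm:completion-theorem} then identifies the target with $\Spf(\Sp^{\cofree,\dual}_{G,(p)})$.

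\emph{Homeomorphism.} I will use the Balmer--Sanders description of $\Spc(\Sp^c_{G,(p)})$ as the set of primes $\cat P(H,q,n)=(\Phi^H)^{-1}(\cat C_{q,n})$. Under this description $\supp(G_+)$ consists exactly of the primes with $H=e$, because $\Phi^H(G_+)=0$ for $H\neq e$. The map $\varphi$ sends $\cat C_{q,n}$ to $(\res^G_e)^{-1}(\cat C_{q,n})=\cat P(e,q,n)$, which is a bijection onto $Y_G$. Both $\Spc(\Sp^c_{(p)})$ and $Y_G$ carry the topology in which closed sets are the specialization-closed ones: each is a finite-dimensional chain with a generic point. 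Inclusions among the $\cat P(e,q,n)$ match the chromatic inclusions. A continuous bijection of such spaces that reflects specialization is therefore a homeomorphism. The only extra input is that $Y_G$ carries the subspace topology, whose closed sets are traces of supports, and I will read this off from \cite{BalmerSanders17}.

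\emph{Not an isomorphism for $G=C_2$, $p=2$.} This is the main obstacle. My plan is to compare global sections, where no sheafification is needed because the whole space is the basic open $\overline U(0)$. On the source, $\mathcal O(\Spc(\Sp^c_{(2)}))=\pi_*S^0_{(2)}$, so degree $0$ gives $\bbZ_{(2)}$. On the target, the presheaf value is $\pi_*\Hom(\unit,\widehat{\unit}_{Y_G})=\pi_*^{C_2}F(EC_{2+},S^0_{(2)})$, the homotopy of the $2$-local cofree sphere. By the Segal conjecture (Lin's theorem for $C_2$), $\pi_0^{C_2}$ of this is $A(C_2)_{(2)}$ completed at the augmentation ideal, which has rank $2$ over $\bbZ_2$. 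Since the source is not this ring, the degree-zero map $\mathcal O_{\text{target}}(Y_G)\to\mathcal O_{\text{source}}$ cannot be an isomorphism; already $\widehat{A(C_2)}\to\bbZ_{(2)}$ fails to be injective.

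The remaining care is to confirm that global sections of the sheafified presheaf equal the presheaf value on the whole space. This holds because $Y_G$ has a generic point whose only open neighborhood is $Y_G$ itself, the same argument used in \Cref{thm:spec-en}.
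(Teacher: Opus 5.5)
Your outline follows the paper's proof: \Cref{lem:functoriality} together with \Cref{exa:whole-spectrum} and \Cref{thm:completion-theorem} builds the morphism, and Lin's theorem on degree-zero global sections handles $C_2$. Your check that $\res^G_e(G_+)$ has full support is fine. So is your non-injectivity argument: on global sections the map is the augmentation, which kills the nonzero completed augmentation ideal. However, two of your justifications are wrong as written and need repair.

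\textbf{The homeomorphism step.} $\Spc(\Sp^c_{(p)})$ is not finite-dimensional. It is the infinite chain $\cat C_{p,1}\supsetneq\cat C_{p,2}\supsetneq\cdots\supsetneq\cat C_{p,\infty}=0$. For infinite spectral chains, closed subsets need not coincide with the specialization-closed ones. For example, in the spectrum of a valuation ring whose nonzero primes intersect to $0$, the set of nonzero primes is specialization-closed but dense. So your step ``a continuous bijection reflecting specialization is a homeomorphism'' has no valid premise as stated. The conclusion does hold here, but you must verify it. In $\Spc(\Sp^c_{(p)})$ the closed sets are $\supp(x)=\{\cat C_{p,m}\mid m>n\}$ for $x$ of type $n$, their intersection $\{\cat C_{p,\infty}\}$, and $\varnothing$; you would need the analogous computation inside $Y_G$. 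The paper sidesteps the topology entirely. From $\res^G_e\ind^G_e(x)\simeq\bigoplus_{g\in G}x$ one gets $\ind^G_e(x)\in\varphi(Q)\iff x\in Q$. This gives injectivity and $\varphi(U(x))=Y_G\cap U(\ind^G_e(x))$, so $\varphi$ is open onto its image $Y_G$.

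\textbf{The sheafification step.} Your argument uses the wrong point. The generic point $\cat P(e,p,1)$ of $Y_G$ has proper open neighbourhoods: for $x$ of type $1$, one has $\overline U(\ind^G_e(x))=\{\cat P(e,p,1)\}$. So it does not force every open cover of $Y_G$ to contain $Y_G$. The relevant point is the closed point $\cat P(e,p,\infty)=\varphi(0)$. Every point of $Y_G$ specializes to it, so its only open neighbourhood is $Y_G$, and hence the sheafified value on $Y_G$ equals the presheaf value. This is also what happens in \Cref{thm:spec-en}, where $\cat P_k$ is the \emph{closed} point of $U_k$. You need the same closed-point argument on the source to justify $\mathcal O(\Spc(\Sp^c_{(2)}))\cong\pi_*S^0_{(2)}$, which you currently assert without comment.

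A minor point: since $I^n=2^{n-1}I$, the $I$-adically completed $2$-local Burnside ring is $\bbZ_{(2)}\oplus\bbZ_2$ additively, not a rank-$2$ module over $\bbZ_2$. This does not affect your argument. With these repairs, your proof is essentially the paper's.
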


\begin{proof}
Apply \Cref{lem:functoriality} to the restriction functor between compact objects:
\[
\res^G_e\colon\Sp_{G,(p)}^c\longrightarrow\Sp_{(p)}^c.
\]
Let $Y\coloneqq \Spc(\Sp_{(p)}^c)$. The induced continuous map is
\[
\varphi\coloneqq\Spc(\res^G_e)\colon
\Spc(\Sp_{(p)}^c)\longrightarrow\Spc(\Sp_{G,(p)}^c).
\]
By \cite[Corollary~4.13]{BalmerSanders17}, the image of $\varphi$ is $Y_G$. We claim that $\varphi$ is a homeomorphism onto this image. For $x\in\Sp_{(p)}^c$ and $Q\in\Spc(\Sp_{(p)}^c)$, the equivalence $\res_e^G\ind_e^G(x)\simeq\bigvee_{g\in G}x$ gives
\[
\ind_e^G(x)\in\varphi(Q)\iff x\in Q.
\]
Thus $\varphi$ is injective. Moreover, for every basic open $U(x)\subseteq\Spc(\Sp_{(p)}^c)$,
\[
\varphi(U(x))=Y_G\cap U(\ind_e^G(x)),
\]
which is open in the subspace topology on $Y_G$. Therefore $\varphi^{-1}$ is continuous, and $\varphi$ is a homeomorphism onto $Y_G$.
Therefore, \Cref{lem:functoriality} yields a morphism of Dirac ringed spaces
\[
(\varphi,\varphi^\#)\colon
\Spf(\Sp_{(p)}^c,Y) \longrightarrow \Spf(\Sp_{G,(p)}^c,Y_G).
\]
By \Cref{exa:whole-spectrum}, the domain is $\Spf(\Sp_{(p)}^c,Y)\cong \Spc(\Sp_{(p)}^c)$, and by \Cref{thm:completion-theorem}, the target identifies with $\Spf(\Sp_{G,(p)}^{\cofree,\dual})$, yielding the stated map. The fact that $\varphi$ is a homeomorphism on underlying spaces is proved above. 

Finally, we show this is not an isomorphism for $G=C_2$ and $p=2$ by evaluating global sections. On the nonequivariant side, the closed point of $\Spc(\Sp_{(2)}^c)$ has no proper open neighborhood, so sheafification does not change the defining presheaf value on the whole space. Hence
\[
\mathcal O_{\Spc(\Sp_{(2)}^c)}\bigl(\Spc(\Sp_{(2)}^c)\bigr)
\cong \pi_*\End_{\Sp_{(2)}}(\mathbb S_{(2)})
\cong \pi_*\mathbb S_{(2)}.
\]
In degree $0$, this is the $2$-local integers $\mathbb Z_{(2)}$.

On the cofree side (the target), the closed point of $Y_G\cong\Spc(\Sp_{(2)}^c)$ has no proper open neighborhood, so every open cover of $Y_G$ contains $Y_G$ itself. Hence sheafification does not change the defining presheaf value on the whole space, and
\[
\mathcal O_{\Spf(\Sp_{C_2,(2)}^{\cofree,\dual})}(Y_G)
\cong \pi_*\Hom_{\Sp_{C_2,(2)}}(e_{Y_G},\mathbb S_{C_2,(2)}).
\]
The Segal conjecture for $C_2$ (specifically Lin's theorem \cite[Theorem 1.1]{Lin1980Conjectures}) identifies the degree $0$ part of this ring with the $I$-adic completion of the 2-local Burnside ring $A(C_2)_{(2)}$, where $I$ is the augmentation ideal. Since the two rings are not isomorphic (for example, the latter is uncountable), the induced map on degree-$0$ global sections is not an isomorphism. Therefore, $(\varphi,\varphi^\#)$ is not an isomorphism of Dirac ringed spaces.
\end{proof}
\begin{Rem}\label{rem:modrep-example}
Our final example comes from modular representation theory. Fix a finite group $G$ and a field $k$ of characteristic $p$ dividing $|G|$. Let $\cat T\simeq\Mod_{\Sp_G}(\underline{k})$ be the stable homotopy theory associated to $k$, where $\underline{k}$ denotes the corresponding Borel-equivariant $G$-spectrum. Its homotopy category identifies with $K(\Inj kG)$. This is a symmetric monoidal, rigidly-compactly generated category, and its compact/dualizable objects identify with the bounded derived category of finitely generated modules, i.e., $\cat T^{\dual} \simeq \Der^b(kG\text{-mod})$.

By \cite{BensonCarlsonRickard97} and \cite[Proposition~8.5]{Balmer10b}, the comparison map
\[
  \rho\colon \Spc \bigl(\Der^b(kG\text{-mod})\bigr)\longrightarrow \Spec^h\bigl(H^{\bullet}(G,k)\bigr)
\]
is a homeomorphism. The homogeneous spectrum $\Spec^h(H^{\bullet}(G,k))$ has a unique closed point, corresponding to the homogeneous maximal ideal $  \mathfrak m \coloneqq H^{>0}(G,k)$. Let $Y_{\mathfrak m}\subseteq \Spc(\Der^b(kG\text{-mod}))$ denote the corresponding singleton.

The tt-completion of $\cat T$ at $Y_{\mathfrak m}$ identifies with the unbounded derived category $\Der(\Mod kG)$ equipped with the diagonal tensor product; see \cite[Example~3.29]{balmer2025tateintermediatevaluetheorem}. In this category, the dualizable objects are again $\Der^b(kG\text{-mod})$, although the compact objects need not coincide with the dualizable ones. In particular, the completed category is typically not rigidly-compactly generated. Using \Cref{prop:comparison-map} we deduce the following:
\end{Rem}

\begin{Thm}\label{thm:modrep-formal-spectrum}
With notation as above, the formal spectrum
\[
\Spf\bigl(\cat T^{\dual},Y_{\mathfrak m}\bigr)
\cong
\Spf\bigl((\widehat{\cat T}_{Y_{\mathfrak m}})^{\dual}\bigr)
\]
is a one-point Dirac ringed space, and its Dirac ring of global sections is $H^{\bullet}(G,k)^{\wedge}_{\mathfrak m}$.
\end{Thm}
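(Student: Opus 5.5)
The plan is to reduce everything to \Cref{prop:comparison-map}, with $\cat K=\cat T^{\dual}=\Der^b(kG\text{-mod})$ and $I=\mathfrak m$, and then read off global sections on a one-point space.

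First, the isomorphism $\Spf(\cat T^{\dual},Y_{\mathfrak m})\cong\Spf((\widehat{\cat T}_{Y_{\mathfrak m}})^{\dual})$ is a direct instance of \Cref{thm:completion-theorem}, with $\cat C=\cat T=\Ind(\cat T^{\dual})$, since $\cat T$ is rigidly-compactly generated.

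Next I check the hypotheses of \Cref{prop:comparison-map}. Here $R_*=\pi_*\End(\unit)$ is $H^{\bullet}(G,k)$ up to the regrading sign convention. This ring is finitely generated over $k$ by Evens--Venkov, hence noetherian, and it is graded-commutative, so it is a Dirac ring. Since $\mathfrak m$ is the unique closed point of $\Spec^h$, we have $V(\mathfrak m)=\{\mathfrak m\}$. Because $\rho$ is a homeomorphism, $Y(\mathfrak m)=\rho^{-1}(V(\mathfrak m))=Y_{\mathfrak m}$. Being the preimage of a closed point, this set is closed, and it is Thomason because $\Spc$ is noetherian here. \Cref{prop:comparison-map} then gives an isomorphism of Dirac ringed spaces
\[
\Spf(\cat T^{\dual},Y_{\mathfrak m})\xrightarrow{\ \sim\ }\Spf^h\bigl(H^{\bullet}(G,k),\mathfrak m\bigr).
\]

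It remains to compute the target. Its underlying space is $V(\mathfrak m)$, a single point, so the only nonempty open set is the whole space. That open is the basic open $\overline D(1)$, so sheafification does not change the defining presheaf value there, and global sections are $(H^{\bullet}(G,k)[1/1])^{\wedge}_{\mathfrak m}=H^{\bullet}(G,k)^{\wedge}_{\mathfrak m}$.

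The only step needing care is the identification of $R_*$ with $H^{\bullet}(G,k)$ as Dirac rings, together with the homogeneous grading conventions. Specifically, one must confirm that $\pi_*\End_{\cat T}(\unit)$ in the $K(\Inj kG)$ model is Tate-free group cohomology $\Ext^{*}_{kG}(k,k)$, placed in nonpositive homotopical degrees. This is compatible with the comparison map of \cite[Proposition~8.5]{Balmer10b}, which is stated for exactly this ring. The remaining steps are formal consequences of earlier results.
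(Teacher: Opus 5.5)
Your proposal is correct and follows the paper's proof essentially step for step. You use Evens--Venkov to apply \Cref{prop:comparison-map} with $I=\mathfrak m$, use that $\rho$ is a homeomorphism so $Y_{\mathfrak m}$ is a single point, read off global sections on the one-point space $\Spf^h(R_*,\mathfrak m)$ where sheafification changes nothing, and obtain the completed side from \Cref{thm:completion-theorem}; your additional checks (that $Y_{\mathfrak m}$ is Thomason, and the grading conventions on $R_*$) are harmless details the paper leaves implicit.
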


\begin{proof}
By the Evens--Venkov theorem the graded ring $R_*=H^{\bullet}(G,k)$ is noetherian, so \Cref{prop:comparison-map} applies. Since $\rho$ is a homeomorphism and $\mathfrak m=H^{>0}(G,k)$ is the unique closed point of $\Spec^h(R_*)$, the subset $Y_{\mathfrak m}=\rho^{-1}(V(\mathfrak m))$ is a single point. By \Cref{prop:comparison-map} with $I=\mathfrak m$, the comparison map $\overline{\rho}$ identifies the structure sheaf of $\Spf(\cat T^{\dual},Y_{\mathfrak m})$ with that of $\Spf^h(R_*,\mathfrak m)$. The latter is the one-point space $V(\mathfrak m)=\{\mathfrak m\}$, so sheafification does not change the defining presheaf value on the whole space. Its ring of global sections is $(R_*)^{\wedge}_{\mathfrak m} = H^{\bullet}(G,k)^{\wedge}_{\mathfrak m}$. The identification $\Spf(\cat T^{\dual},Y_{\mathfrak m})\cong\Spf((\widehat{\cat T}_{Y_{\mathfrak m}})^{\dual})$ is \Cref{thm:completion-theorem}.
\end{proof}
\bibliographystyle{alpha}\bibliography{tt-geo}

@unpublished{krause2023completionstriangulatedcategories,
	title={Completions of triangulated categories}, 
	author={Henning Krause},
	year={2023},
	eprint={2309.01260},
	archivePrefix={arXiv},
	primaryClass={math.CT},
	NOTE={Available online at \url{https://arxiv.org/abs/2309.01260}}, 
}

@incollection{alonso1999duality,
	author    = {Alonso Tarr{\'i}o, Leovigildo and Jerem{\'i}as L{\'o}pez, Ana and Lipman, Joseph},
	title     = {Duality and flat base change on formal schemes},
	booktitle = {Studies in duality on noetherian formal schemes and non-noetherian ordinary schemes},
	series    = {Contemporary Mathematics},
	volume    = {244},
	pages     = {3--90},
	publisher = {American Mathematical Society},
	address   = {Providence, RI},
	year      = {1999},
	note      = {Available at \url{http://www.ams.org/books/conm/244/}}
}

@article {BarthelHeardSers2023Stratification,
	AUTHOR = {Barthel, Tobias and Heard, Drew and Sanders, Beren},
	TITLE = {Stratification in tensor triangular geometry with applications
	to spectral {M}ackey functors},
	JOURNAL = {Camb. J. Math.},
	FJOURNAL = {Cambridge Journal of Mathematics},
	VOLUME = {11},
	YEAR = {2023},
	NUMBER = {4},
	PAGES = {829--915},
	ISSN = {2168-0930,2168-0949},
	MRCLASS = {18G80 (14F08 18F99 55P42 55P91 55U35)},
	MRNUMBER = {4650265},
	DOI = {10.4310/cjm.2023.v11.n4.a2},
	URL = {https://doi.org/10.4310/cjm.2023.v11.n4.a2},
}

@article {NaumannPol2024Separable,
	AUTHOR = {Naumann, Niko and Pol, Luca},
	TITLE = {Separable commutative algebras and {G}alois theory in stable
	homotopy theories},
	JOURNAL = {Adv. Math.},
	FJOURNAL = {Advances in Mathematics},
	VOLUME = {449},
	YEAR = {2024},
	PAGES = {Paper No. 109736, 67},
	ISSN = {0001-8708},
	MRCLASS = {14A30 (13B05 14F20 18G80 20C20 55U35)},
	MRNUMBER = {4752740},
	DOI = {10.1016/j.aim.2024.109736},
	URL = {https://doi.org/10.1016/j.aim.2024.109736},
}

@incollection {BCHNPS-descent,
	AUTHOR = {Barthel, Tobias and Castellana, Nat\`alia and Heard, Drew and
	Naumann, Niko and Pol, Luca and Sanders, Beren},
	TITLE = {Descent in tensor triangular geometry},
	BOOKTITLE = {Triangulated categories in representation theory and
	beyond---the {A}bel {S}ymposium 2022},
	SERIES = {Abel Symp.},
	VOLUME = {17},
	PAGES = {1--56},
	PUBLISHER = {Springer, Cham},
	YEAR = {2024},
	MRCLASS = {18N60 (18F70 18G80 55U35)},
	MRNUMBER = {4786501},
	DOI = {10.1007/978-3-031-57789-5\_1},
	URL = {https://doi.org/10.1007/978-3-031-57789-5_1},
}

@article {BlumbergGepnerTabuada2013universal,
	AUTHOR = {Blumberg, Andrew J. and Gepner, David and Tabuada, Gon\c{c}alo},
	TITLE = {A universal characterization of higher algebraic {$K$}-theory},
	JOURNAL = {Geom. Topol.},
	FJOURNAL = {Geometry \& Topology},
	VOLUME = {17},
	YEAR = {2013},
	NUMBER = {2},
	PAGES = {733--838},
	ISSN = {1465-3060},
	MRCLASS = {19D10 (18D20 19D25 19D55 55N15 55U40)},
	MRNUMBER = {3070515},
	MRREVIEWER = {Ross Staffeldt},
	DOI = {10.2140/gt.2013.17.733},
	URL = {https://doi.org/10.2140/gt.2013.17.733},
}

@article {HironakaMatsumura1968Formal,
	AUTHOR = {Hironaka, Heisuke and Matsumura, Hideyuki},
	TITLE = {Formal functions and formal embeddings},
	JOURNAL = {J. Math. Soc. Japan},
	FJOURNAL = {Journal of the Mathematical Society of Japan},
	VOLUME = {20},
	YEAR = {1968},
	PAGES = {52--82},
	ISSN = {0025-5645},
	MRCLASS = {14.55},
	MRNUMBER = {251043},
	MRREVIEWER = {M. Miyanishi},
	DOI = {10.2969/jmsj/02010052},
	URL = {https://doi.org/10.2969/jmsj/02010052},
}

@book {GrothendieckDieudonne1971Elements,
	AUTHOR = {Grothendieck, A. and Dieudonn\'{e}, J. A.},
	TITLE = {\'{E}l\'{e}ments de g\'{e}om\'{e}trie alg\'{e}brique. {I}},
	SERIES = {Grundlehren der mathematischen Wissenschaften [Fundamental
	Principles of Mathematical Sciences]},
	VOLUME = {166},
	PUBLISHER = {Springer-Verlag, Berlin},
	YEAR = {1971},
	PAGES = {ix+466},
	ISBN = {3-540-05113-9; 0-387-05113-9},
	MRCLASS = {14-02 (14A15 14F05)},
	MRNUMBER = {3075000},
}

@book {ElmendorfKrizMellMay1997Rings,
	AUTHOR = {Elmendorf, A. D. and Kriz, I. and Mandell, M. A. and May, J.
	P.},
	TITLE = {Rings, modules, and algebras in stable homotopy theory},
	SERIES = {Mathematical Surveys and Monographs},
	VOLUME = {47},
	NOTE = {With an appendix by M. Cole},
	PUBLISHER = {American Mathematical Society, Providence, RI},
	YEAR = {1997},
	PAGES = {xii+249},
	ISBN = {0-8218-0638-6},
	MRCLASS = {55N20 (19D10 19D55 55P42 55T25)},
	MRNUMBER = {1417719},
	MRREVIEWER = {Donald M. Davis},
	DOI = {10.1090/surv/047},
	URL = {https://doi.org/10.1090/surv/047},
}

@article{HesselholtPstragowskiDiracI,
	author = {Hesselholt, Lars and Pstr{\k{a}}gowski, Piotr},
	title = {Dirac geometry {I}: Commutative algebra},
	journal = {Peking Math. J.},
	volume = {8},
	number = {3},
	year = {2025},
	pages = {405--480},
	doi = {10.1007/s42543-023-00072-6},
	url = {https://doi.org/10.1007/s42543-023-00072-6},
}

@article {bhv2,
	AUTHOR = {Barthel, Tobias and Heard, Drew and Valenzuela, Gabriel},
	TITLE = {Local duality for structured ring spectra},
	JOURNAL = {J. Pure Appl. Algebra},
	FJOURNAL = {Journal of Pure and Applied Algebra},
	VOLUME = {222},
	YEAR = {2018},
	NUMBER = {2},
	PAGES = {433--463},
	ISSN = {0022-4049},
	MRCLASS = {18G60 (55P43)},
	MRNUMBER = {3694463},
	MRREVIEWER = {Stanis\l aw Betley},
	DOI = {10.1016/j.jpaa.2017.04.012},
	URL = {https://doi.org/10.1016/j.jpaa.2017.04.012},
}

@article {bhv1,
	AUTHOR = {Barthel, Tobias and Heard, Drew and Valenzuela, Gabriel},
	TITLE = {Local duality in algebra and topology},
	JOURNAL = {Adv. Math.},
	FJOURNAL = {Advances in Mathematics},
	VOLUME = {335},
	YEAR = {2018},
	PAGES = {563--663},
	ISSN = {0001-8708},
	MRCLASS = {55U35 (14F05 55P60)},
	MRNUMBER = {3836674},
	MRREVIEWER = {Constanze Roitzheim},
	DOI = {10.1016/j.aim.2018.07.017},
	URL = {https://doi.org/10.1016/j.aim.2018.07.017},
}

@article {DwyerGreenlees02,
	AUTHOR = {Dwyer, W. G. and Greenlees, J. P. C.},
	TITLE = {Complete modules and torsion modules},
	JOURNAL = {Amer. J. Math.},
	FJOURNAL = {American Journal of Mathematics},
	VOLUME = {124},
	YEAR = {2002},
	NUMBER = {1},
	PAGES = {199--220},
	ISSN = {0002-9327},
	MRCLASS = {16E30 (16D90 18E30)},
	MRNUMBER = {1879003},
	MRREVIEWER = {Henning Krause},
	URL =
	{http://muse.jhu.edu.oca.ucsc.edu/journals/american_journal_of_mathematics/v124/124.1dwyer.pdf},
}

@article {GreenleesMay92,
	AUTHOR = {Greenlees, J. P. C. and May, J. P.},
	TITLE = {Derived functors of {$I$}-adic completion and local homology},
	JOURNAL = {J. Algebra},
	FJOURNAL = {Journal of Algebra},
	VOLUME = {149},
	YEAR = {1992},
	NUMBER = {2},
	PAGES = {438--453},
	ISSN = {0021-8693},
	MRCLASS = {13B35 (13D45 18G10 19A22)},
	MRNUMBER = {1172439},
	MRREVIEWER = {Michinori Sakaguchi},
	DOI = {10.1016/0021-8693(92)90026-I},
	URL = {https://doi-org.oca.ucsc.edu/10.1016/0021-8693(92)90026-I},
}

@incollection {Miller2002Graded,
	AUTHOR = {Miller, Ezra},
	TITLE = {Graded {Greenlees--May} duality and the {\v C}ech hull},
	BOOKTITLE = {Local cohomology and its applications},
	EDITOR = {Lyubeznik, Gennady},
	SERIES = {Lecture Notes in Pure and Appl. Math.},
	VOLUME = {226},
	PAGES = {233--253},
	PUBLISHER = {Dekker, New York},
	YEAR = {2002},
	ISBN = {0-8247-0741-9},
	URL = {https://sites.math.duke.edu/~ezra/GMCech/gmCech.pdf},
}

@article {AlonsoJeremiasLipman97,
	AUTHOR = {Alonso Tarr\'{\i}o, Leovigildo and Jerem\'{\i}as L\'{o}pez, Ana and Lipman,
	Joseph},
	TITLE = {Local homology and cohomology on schemes},
	JOURNAL = {Ann. Sci. \'{E}cole Norm. Sup. (4)},
	FJOURNAL = {Annales Scientifiques de l'\'{E}cole Normale Sup\'{e}rieure. Quatri\`eme
	S\'{e}rie},
	VOLUME = {30},
	YEAR = {1997},
	NUMBER = {1},
	PAGES = {1--39},
	ISSN = {0012-9593},
	MRCLASS = {14F99 (14B15)},
	MRNUMBER = {1422312},
	MRREVIEWER = {Gennady Lyubeznik},
	DOI = {10.1016/S0012-9593(97)89914-4},
	URL = {https://doi-org.oca.ucsc.edu/10.1016/S0012-9593(97)89914-4},
}

@article {DellAmbrogioStanley16,
	AUTHOR = {Dell'Ambrogio, Ivo and Stanley, Donald},
	TITLE = {Affine weakly regular tensor triangulated categories},
	JOURNAL = {Pacific J. Math.},
	FJOURNAL = {Pacific Journal of Mathematics},
	VOLUME = {285},
	YEAR = {2016},
	NUMBER = {1},
	PAGES = {93--109},
	ISSN = {0030-8730},
	MRCLASS = {18E30 (55P42 55U35)},
	MRNUMBER = {3554244},
	MRREVIEWER = {Robert Harold McRae},
	DOI = {10.2140/pjm.2016.285.93},
	URL = {https://doi-org.oca.ucsc.edu/10.2140/pjm.2016.285.93},
}

@article {BehrensShah2020equivariant,
	AUTHOR = {Behrens, Mark and Shah, Jay},
	TITLE = {{$C_2$}-equivariant stable homotopy from real motivic stable
	homotopy},
	JOURNAL = {Ann. K-Theory},
	FJOURNAL = {Annals of K-Theory},
	VOLUME = {5},
	YEAR = {2020},
	NUMBER = {3},
	PAGES = {411--464},
	ISSN = {2379-1683},
	MRCLASS = {14F42 (55N91 55P91 55Q91)},
	MRNUMBER = {4132743},
	MRREVIEWER = {Stephen McKean},
	DOI = {10.2140/akt.2020.5.411},
	URL = {https://doi.org/10.2140/akt.2020.5.411},
}

@article {HoveyStrickland99,
	AUTHOR = {Hovey, Mark and Strickland, Neil P.},
	TITLE = {Morava {$K$}-theories and localisation},
	JOURNAL = {Mem. Amer. Math. Soc.},
	FJOURNAL = {Memoirs of the American Mathematical Society},
	VOLUME = {139},
	YEAR = {1999},
	NUMBER = {666},
	PAGES = {viii+100},
	ISSN = {0065-9266},
	CODEN = {MAMCAU},
	MRCLASS = {55P60 (55N22 55P42 55T15)},
	MRNUMBER = {1601906},
	MRREVIEWER = {J. P. C. Greenlees},
	DOI = {10.1090/memo/0666},
	URL = {http://dx.doi.org.ep.fjernadgang.kb.dk/10.1090/memo/0666},
}

@article {BalmerSanders17,
	AUTHOR = {Balmer, Paul and Sanders, Beren},
	TITLE = {The spectrum of the equivariant stable homotopy category of a
	finite group},
	JOURNAL = {Invent. Math.},
	FJOURNAL = {Inventiones Mathematicae},
	VOLUME = {208},
	YEAR = {2017},
	NUMBER = {1},
	PAGES = {283--326},
	ISSN = {0020-9910},
	MRCLASS = {18E30 (55P42 55U35)},
	MRNUMBER = {3621837},
	MRREVIEWER = {Geoffrey M. L. Powell},
	DOI = {10.1007/s00222-016-0691-3},
	URL = {https://doi.org/10.1007/s00222-016-0691-3},
}

@incollection {Greenlees01,
	AUTHOR = {Greenlees, J. P. C.},
	TITLE = {Tate cohomology in axiomatic stable homotopy theory},
	BOOKTITLE = {Cohomological methods in homotopy theory ({B}ellaterra, 1998)},
	SERIES = {Progr. Math.},
	VOLUME = {196},
	PAGES = {149--176},
	PUBLISHER = {Birkh\"auser, Basel},
	YEAR = {2001},
	MRCLASS = {55P42 (55P60)},
	MRNUMBER = {1851253 (2004d:55008)},
	DOI = {10.1007/978-3-0348-8312-2_12},
	URL = {http://dx.doi.org.ep.fjernadgang.kb.dk/10.1007/978-3-0348-8312-2_12},
}

@article{BalmerFavi11,
	AUTHOR = {Balmer, Paul and Favi, Giordano},
	TITLE = {Generalized tensor idempotents and the telescope conjecture},
	JOURNAL = {Proc. Lond. Math. Soc. (3)},
	FJOURNAL = {Proceedings of the London Mathematical Society. Third Series},
	VOLUME = {102},
	YEAR = {2011},
	NUMBER = {6},
	PAGES = {1161--1185},
	ISSN = {0024-6115},
	MRCLASS = {18E30 (14F05 55P60)},
	MRNUMBER = {2806103 (2012d:18010)},
	URL = {http://dx.doi.org/10.1112/plms/pdq050},
}

@article {Lin1980Conjectures,
	AUTHOR = {Lin, Wen Hsiung},
	TITLE = {On conjectures of {M}ahowald, {S}egal and {S}ullivan},
	JOURNAL = {Math. Proc. Cambridge Philos. Soc.},
	FJOURNAL = {Mathematical Proceedings of the Cambridge Philosophical
	Society},
	VOLUME = {87},
	YEAR = {1980},
	NUMBER = {3},
	PAGES = {449--458},
	ISSN = {0305-0041,1469-8064},
	MRCLASS = {55Q10},
	MRNUMBER = {556925},
	MRREVIEWER = {Donald\ M.\ Davis},
	DOI = {10.1017/S0305004100056887},
	URL = {https://doi.org/10.1017/S0305004100056887},
}

@article {Stevenson2018Filtrations,
	AUTHOR = {Stevenson, Greg},
	TITLE = {Filtrations via tensor actions},
	JOURNAL = {Int. Math. Res. Not. IMRN},
	FJOURNAL = {International Mathematics Research Notices. IMRN},
	YEAR = {2018},
	NUMBER = {8},
	PAGES = {2535--2558},
	ISSN = {1073-7928,1687-0247},
	MRCLASS = {18E30},
	MRNUMBER = {3801492},
	MRREVIEWER = {Luz\ Adriana\ Mej\'{\i}a Casta\~{n}o},
	DOI = {10.1093/imrn/rnw325},
	URL = {https://doi.org/10.1093/imrn/rnw325},
}

@article {Balmer05a,
	AUTHOR = {Balmer, Paul},
	TITLE  = {The spectrum of prime ideals in tensor triangulated categories},
	JOURNAL = {J. Reine Angew. Math.},
	FJOURNAL = {Journal f\"ur die Reine und Angewandte Mathematik},
	VOLUME = {588},
	YEAR = {2005},
	PAGES = {149--168},
}

@article {Balmer02,
	AUTHOR = {Balmer, P.},
	TITLE = {Presheaves of triangulated categories and reconstruction of
	schemes},
	JOURNAL = {Math. Ann.},
	FJOURNAL = {Mathematische Annalen},
	VOLUME = {324},
	YEAR = {2002},
	NUMBER = {3},
	PAGES = {557--580},
	ISSN = {0025-5831},
}

@article{Balmer10b,
	AUTHOR = {Balmer, Paul},
	TITLE  = {Spectra, spectra, spectra -- Tensor triangular spectra versus {Z}ariski spectra of endomorphism rings},
	JOURNAL = {Algebr. Geom. Topol.},
	FJOURNAL = {Algebraic \& Geometric Topology},
	VOLUME = {10},
	YEAR = {2010},
	NUMBER = {3},
	PAGES = {1521--1563},
	MRCLASS = {18E30 (14F05, 19K35, 20C20, 55P42, 55U35)},
}

@misc {Aoki2025Sheaves,
    AUTHOR = {Aoki, Ko},
     TITLE = {The sheaves--spectrum adjunction},
      YEAR = {2025},
    EPRINT = {2302.04069},
ARCHIVEPREFIX = {arXiv},
PRIMARYCLASS = {math.CT},
       DOI = {10.48550/arXiv.2302.04069},
       URL = {https://arxiv.org/abs/2302.04069},
}

@incollection {ThomasonTrobaugh90,
	AUTHOR = {Thomason, R. W. and Trobaugh, T.},
	TITLE = {Higher algebraic {$K$}-theory of schemes and of derived
	categories},
	BOOKTITLE = {The Grothendieck Festschrift, Vol.\ III},
	SERIES = {Progr. Math.},
	VOLUME = {88},
	PAGES = {247--435},
	PUBLISHER = {Birkh\"auser},
	ADDRESS = {Boston, MA},
	YEAR = {1990},
}

@article {Thomason97,
	AUTHOR = {Thomason, R. W.},
	TITLE = {The classification of triangulated subcategories},
	JOURNAL = {Compositio Math.},
	FJOURNAL = {Compositio Mathematica},
	VOLUME = {105},
	YEAR = {1997},
	NUMBER = {1},
	PAGES = {1--27},
}

@article {BensonCarlsonRickard97,
	AUTHOR = {Benson, Dave and Carlson, Jon F. and Rickard, Jeremy},
	TITLE = {Thick subcategories of the stable module category},
	JOURNAL = {Fund. Math.},
	FJOURNAL = {Fundamenta Mathematicae},
	VOLUME = {153},
	YEAR = {1997},
	NUMBER = {1},
	PAGES = {59--80},
	ISSN = {0016-2736},
}

@incollection {Hopkins87,
	AUTHOR = {Hopkins, Michael J.},
	TITLE = {Global methods in homotopy theory},
	BOOKTITLE = {Homotopy theory (Durham, 1985)},
	SERIES = {LMS Lect. Note},
	VOLUME = {117},
	PAGES = {73--96},
	PUBLISHER = {Cambridge Univ. Press},
	YEAR = {1987},
}

@article {Neeman92a,
	AUTHOR = {Neeman, Amnon},
	TITLE = {The chromatic tower for {$D(R)$}},
	JOURNAL = {Topology},
	FJOURNAL = {Topology. An International Journal of Mathematics},
	VOLUME = {31},
	YEAR = {1992},
	NUMBER = {3},
	PAGES = {519--532},
}

@book {HTTLurie,
	AUTHOR = {Lurie, Jacob},
	TITLE = {Higher topos theory},
	SERIES = {Annals of Mathematics Studies},
	VOLUME = {170},
	PUBLISHER = {Princeton University Press, Princeton, NJ},
	YEAR = {2009},
	PAGES = {xviii+925},
	ISBN = {978-0-691-14049-0; 0-691-14049-9},
	MRCLASS = {18-02 (18B25 18E35 18G30 18G55 55U40)},
	MRNUMBER = {2522659},
	MRREVIEWER = {Mark Hovey},
	DOI = {10.1515/9781400830558},
	URL = {https://doi.org/10.1515/9781400830558},
}

@unpublished{HALurie,
	Author = {Lurie, Jacob},
	Title = {Higher Algebra},
	NOTE = {1553~pages, available from the author's website},
	Year = {2017},
}

@article {HoveyPalmieriStrickland97,
	AUTHOR = {Hovey, Mark and Palmieri, John H. and Strickland, Neil P.},
	TITLE = {Axiomatic stable homotopy theory},
	JOURNAL = {Mem. Amer. Math. Soc.},
	FJOURNAL = {Memoirs of the American Mathematical Society},
	VOLUME = {128},
	YEAR = {1997},
	NUMBER = {610},
}

@article {BuanKrauseSolberg07,
	AUTHOR = {Buan, Aslak Bakke and Krause, Henning and Solberg, {\O}yvind},
	TITLE = {Support varieties: an ideal approach},
	JOURNAL = {Homology, Homotopy Appl.},
	FJOURNAL = {Homology, Homotopy and Applications},
	VOLUME = {9},
	YEAR = {2007},
	NUMBER = {1},
	PAGES = {45--74},
	ISSN = {1532-0073},
	MRCLASS = {18E30},
	MRNUMBER = {MR2280286},
}

@article {Neeman92b,
	AUTHOR = {Neeman, Amnon},
	TITLE = {The connection between the {$K$}-theory localization theorem
	of {T}homason, {T}robaugh and {Y}ao and the smashing
	subcategories of {B}ousfield and {R}avenel},
	JOURNAL = {Ann. Sci. \'Ecole Norm. Sup. (4)},
	FJOURNAL = {Annales Scientifiques de l'\'Ecole Normale Sup\'erieure.
	Quatri\`eme S\'erie},
	VOLUME = {25},
	YEAR = {1992},
	NUMBER = {5},
	PAGES = {547--566},
	ISSN = {0012-9593},
	CODEN = {ASENAH},
	MRCLASS = {18E30 (19D10 19E08)},
	MRNUMBER = {MR1191736 (93k:18015)},
	MRREVIEWER = {Steven E. Landsburg},
}

@article {BenMosheCarmeliSchlankYanovski2025Descent,
	AUTHOR = {Ben-Moshe, Shay and Carmeli, Shachar and Schlank, Tomer and
	Yanovski, Lior},
	TITLE = {Descent and cyclotomic redshift for chromatically localized
	algebraic {$K$}-theory},
	JOURNAL = {J. Amer. Math. Soc.},
	FJOURNAL = {Journal of the American Mathematical Society},
	VOLUME = {38},
	YEAR = {2025},
	NUMBER = {2},
	PAGES = {521--583},
	ISSN = {0894-0347,1088-6834},
	MRCLASS = {19D10 (55P42)},
	MRNUMBER = {4868950},
	DOI = {10.1090/jams/1052},
	URL = {https://doi.org/10.1090/jams/1052},
}

@article{BarthelHeardNaumann20pp,
	AUTHOR = {Barthel, Tobias and Heard, Drew and Naumann, Niko},
	TITLE = {On conjectures of {H}ovey-{S}trickland and {C}hai},
	JOURNAL = {Selecta Math. (N.S.)},
	FJOURNAL = {Selecta Mathematica. New Series},
	VOLUME = {28},
	YEAR = {2022},
	NUMBER = {3},
	ISSN = {1022-1824},
	MRCLASS = {14L05 (11S31 55N22 55P42)},
	MRNUMBER = {4405748},
	DOI = {10.1007/s00029-022-00766-2},
	URL = {https://doi.org/10.1007/s00029-022-00766-2},
}

@article {Heard2023local,
	AUTHOR = {Heard, Drew},
	TITLE = {The {${\rm Sp}_{k,n}$}-local stable homotopy category},
	JOURNAL = {Algebr. Geom. Topol.},
	FJOURNAL = {Algebraic \& Geometric Topology},
	VOLUME = {23},
	YEAR = {2023},
	NUMBER = {8},
	PAGES = {3655--3706},
	ISSN = {1472-2747},
	MRCLASS = {55P42 (55P60 55T15)},
	MRNUMBER = {4665280},
	DOI = {10.2140/agt.2023.23.3655},
	URL = {https://doi.org/10.2140/agt.2023.23.3655},
}

@incollection {GreenleesMay1995Completions,
	AUTHOR = {Greenlees, J. P. C. and May, J. P.},
	TITLE = {Completions in algebra and topology},
	BOOKTITLE = {Handbook of algebraic topology},
	PAGES = {255--276},
	PUBLISHER = {North-Holland, Amsterdam},
	YEAR = {1995},
	MRCLASS = {55P60},
	MRNUMBER = {1361892},
	MRREVIEWER = {Jean-Claude Thomas},
	DOI = {10.1016/B978-044481779-2/50008-0},
	URL = {https://doi.org/10.1016/B978-044481779-2/50008-0},
}

@article {abhs,
	AUTHOR = {Arone, Gregory and Barthel, Tobias and Heard, Drew and
	Sanders, Beren},
	TITLE = {The spectrum of excisive functors},
	JOURNAL = {Invent. Math.},
	FJOURNAL = {Inventiones Mathematicae},
	VOLUME = {241},
	YEAR = {2025},
	NUMBER = {2},
	PAGES = {363--464},
	ISSN = {0020-9910},
	MRCLASS = {Prelim},
	MRNUMBER = {4929647},
	DOI = {10.1007/s00222-025-01338-9},
	URL = {https://doi.org/10.1007/s00222-025-01338-9},
}

@article{balmer2025tateintermediatevaluetheorem,
	title = {The {T}ate {I}ntermediate {V}alue {T}heorem},
	journal = {Advances in Mathematics},
	volume = {483},
	pages = {110675},
	year = {2025},
	issn = {0001-8708},
	doi = {https://doi.org/10.1016/j.aim.2025.110675},
	url = {https://www.sciencedirect.com/science/article/pii/S0001870825005730},
	author = {Paul Balmer and Beren Sanders},
}

@book {MayPonto2012More,
	AUTHOR = {May, J. P. and Ponto, K.},
	TITLE = {More concise algebraic topology},
	SERIES = {Chicago Lectures in Mathematics},
	NOTE = {Localization, completion, and model categories},
	PUBLISHER = {University of Chicago Press, Chicago, IL},
	YEAR = {2012},
	PAGES = {xxviii+514},
	ISBN = {978-0-226-51178-8; 0-226-51178-2},
	MRCLASS = {55-02 (16T05 18G55 55P60)},
	MRNUMBER = {2884233},
	MRREVIEWER = {Ismar Voli\'{c}},
}

@unpublished{balmer2024perfectcomplexescompletion,
	title={Perfect complexes and completion}, 
	author={Paul Balmer and Beren Sanders},
	year={2024},
	eprint={2411.14761},
	archivePrefix={arXiv},
	primaryClass={math.AC},
	url={https://arxiv.org/abs/2411.14761},
	NOTE = {Preprint, 20~pages, available online at \href{https://arxiv.org/pdf/2411.14761}{arxiv:2411.14761}},
}

@article {MR5009505,
	AUTHOR = {Calm\`es, Baptiste and Harpaz, Yonatan and Land, Markus and
	Nardin, Denis and Steimle, Wolfgang and Dotto, Emanuele and
	Hebestreit, Fabian and Moi, Kristian and Nikolaus, Thomas},
	TITLE = {Hermitian {K}-theory for stable {$\infty$}-categories {II}:
	{C}obordism categories and additivity},
	JOURNAL = {Acta Math.},
	FJOURNAL = {Acta Mathematica},
	VOLUME = {235},
	YEAR = {2026},
	NUMBER = {2},
	PAGES = {149--400},
	ISSN = {0001-5962,1871-2509},
	MRCLASS = {19G38 (18N99 55U35)},
	MRNUMBER = {5009505},
	DOI = {10.4310/acta.2025.n235.n2.a1},
	URL = {https://doi.org/10.4310/acta.2025.n235.n2.a1},
}

@article {ErnePicadoPultr2022Adjoint,
    AUTHOR = {Ern\'{e}, Marcel and Picado, Jorge and Pultr, Ale\v{s}},
     TITLE = {Adjoint maps between implicative semilattices and continuity
              of localic maps},
   JOURNAL = {Algebra Universalis},
  FJOURNAL = {Algebra Universalis},
    VOLUME = {83},
      YEAR = {2022},
    NUMBER = {2},
     PAGES = {Paper No. 13, 23},
      ISSN = {0002-5240,1420-8911},
   MRCLASS = {06D20 (06D22 18F70 54C05)},
  MRNUMBER = {4400114},
MRREVIEWER = {Tomasz\ Kubiak},
       DOI = {10.1007/s00012-022-00767-4},
       URL = {https://doi.org/10.1007/s00012-022-00767-4},
}

@misc{aoki2025higherzariskigeometry,
	title={Higher {Z}ariski Geometry}, 
	author={Ko Aoki and Tobias Barthel and Anish Chedalavada and Tomer Schlank and Greg Stevenson},
	year={2025},
	eprint={2508.11621},
	archivePrefix={arXiv},
	primaryClass={math.AG},
	url={https://arxiv.org/abs/2508.11621}, 
}
\end{document}